\documentclass[12pt]{amsart}

\usepackage{amssymb, amsmath, amsthm}
\usepackage[margin=1in]{geometry}
\usepackage{tikz}

\allowdisplaybreaks

\numberwithin{equation}{section}

\theoremstyle{plain}
\newtheorem{thm}{Theorem}[section]
\newtheorem{prop}[thm]{Proposition}
\newtheorem{lem}[thm]{Lemma}
\newtheorem{cor}[thm]{Corollary}
\newtheorem*{referthmA}{Theorem A}

\theoremstyle{definition}
\newtheorem{defn}[thm]{Definition}

\newtheorem{notation}[thm]{Notation}

\newcommand{\ichi}{\mathbf{1}}

\newcommand{\N}{\mathbb{N}}
\newcommand{\R}{\mathbb{R}}

\newcommand{\Z}{\mathbb{Z}}

\newcommand{\calS}{\mathcal{S}}

\newcommand{\dotS}{\Dot{S}}
\newcommand{\supp}{\mathrm{supp}\, }


\newcommand{\RomI}{\mathrm{I}}
\newcommand{\II}{\mathrm{I}\hspace{-0.5pt}\mathrm{I}}
\newcommand{\III}{\mathrm{I}\hspace{-0.5pt}\mathrm{I}\hspace{-0.5pt}\mathrm{I}}

\newcommand{\IV}{\mathrm{I}\hspace{-0.5pt}\mathrm{V}}
\newcommand{\RomV}{\mathrm{V}}
\newcommand{\VI}{\mathrm{V}\hspace{-0.5pt}\mathrm{I}}
\newcommand{\grad}{\textrm{grad}}
\newcommand{\sign}{\mathrm{sign}\,}
\newcommand{\Hess}{\mathrm{Hess}}

\begin{document}

\title[ Bilinear oscillatory Fourier multipliers ]
{ Bilinear oscillatory Fourier multipliers } 

\author[T. Kato]{Tomoya Kato}
\author[A. Miyachi]{Akihiko Miyachi}
\author[N. Shida]{Naoto Shida}
\author[N. Tomita]{Naohito Tomita}

\address[T. Kato]
{Faculty of Engineering, 
Gifu University, 
Gifu 501-1193, Japan}

\address[A. Miyachi]
{Department of Mathematics, 
Tokyo Woman's Christian University, 
Zempukuji, Suginami-ku, Tokyo 167-8585, Japan}

\address[N. Shida]
{Graduate School of Mathematics, 
Nagoya University, 
Furocho, Chikusa-ku, Nagoya 464-8602, Japan}

\address[N. Tomita]
{Department of Mathematics, 
Graduate School of Science, Osaka University, 
Toyonaka, Osaka 560-0043, Japan}

\email[T. Kato]{kato.tomoya.s3@f.gifu-u.ac.jp}
\email[A. Miyachi]{miyachi@lab.twcu.ac.jp}
\email[N. Shida]{naoto.shida.c3@math.nagoya-u.ac.jp}
\email[N. Tomita]{tomita@math.sci.osaka-u.ac.jp}

\date{\today}

\keywords
{Bilinear Fourier multipliers, 
bilinear oscillatory integral operators}

\thanks
{This work was supported by JSPS KAKENHI, 
Grant Numbers 
23K12995 (Kato), 
20H01815 (Miyachi), 
23KJ1053 (Shida), 
and 
20K03700 (Tomita).}

\subjclass[2020]
{42B15, 
42B20}

\begin{abstract}
For bilinear Fourier multipliers that contain 
some oscillatory factors, boundedness of the operators 
between Lebesgue spaces is given including 
endpoint cases. 
Sharpness of the result is also considered. 
\end{abstract}

\maketitle

\section{Introduction}\label{intro}
Throughout this paper, the letter $n$ denotes a positive integer.

For a bounded function $\sigma = \sigma(\xi)$ on $\R^n$,
the linear Fourier multiplier operator $\sigma(D)$ is defined by
\[
\sigma(D)f(x) 
=
\frac{1}{(2\pi)^n} 
\int_{\R^n}
e^{ix \cdot \xi}
\sigma(\xi)
\widehat{f}(\xi)
\,
d\xi,
\quad
x \in \R^n,
\]
for $f \in \calS(\R^n)$, where
$\widehat{f}$ denotes the Fourier transform of $f$.
If $X$ is a function space on $\R^n$
equipped with the quasi-norm $\|\cdot\|_{X}$
and there exists $c > 0$
such that
\[
\|\sigma(D)f\|_{X} \le c \|f\|_{X}
\,\,\,\,
\text{for all}
\,\,\,\,
f \in \calS \cap X,
\]
then we say that $\sigma(D)$ is bounded on $X$.

We recall the result for the multiplier of the form
\[
e^{i|\xi|^s} \zeta(\xi) |\xi|^{m}, 
\quad
0 < s  < 1
\,\,\, 
\text{or}
\,\,\,
1 < s < \infty, 
\quad
m \in \R,
\]
where $\zeta$ is $C^\infty$ function on $\R^n$ such that
$\zeta(\xi) = 0$ for $|\xi| \le 1$
and
$\zeta(\xi) = 1$ for $|\xi| \ge 2$ (see also Notation \ref{notation}).

\begin{referthmA}
[\cite{Sjolin-MathZ, Miyachi-multiplier}]
\label{lin_bdd_Hp}
Let $m \in \R$, and let $0< s < 1$ or $1 < s < \infty$,
and let $1 \le p \le \infty$.
Then the Fourier multiplier operator 
$e^{i|D|^s} \zeta(D) |D|^{m}$
is bounded
on $H^p$ when $p < \infty$
and on $BMO$ when $p= \infty$
if and only if 
$m \le -ns|1/p -1/2|$.
\end{referthmA}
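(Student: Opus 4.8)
The plan is to reduce to three endpoint estimates by analytic interpolation. Consider the family $\sigma_z(\xi)=e^{i|\xi|^s}\zeta(\xi)|\xi|^{z}$, entire in $z$, on the strip $-ns/2\le\mathrm{Re}\,z\le 0$. On the line $\mathrm{Re}\,z=0$ we have $|\sigma_z(\xi)|\le 1$, so $\sigma_z(D)$ is bounded on $L^2$ with norm $\le 1$ by Plancherel. On the line $\mathrm{Re}\,z=-ns/2$ I would prove the estimate $\|\sigma_z(D)f\|_{L^1}\lesssim(1+|\mathrm{Im}\,z|)^{N}\|f\|_{H^1}$; since $\sigma_z(D)$ commutes with the Riesz transforms, which are bounded on $H^1$, this upgrades to boundedness of $\sigma_z(D)$ on $H^1$, and then, dually, on $BMO$ --- the adjoint $\sigma_z(D)^{\ast}=\overline{\sigma_z}(D)$ is again a multiplier of exactly this type, the sign in front of $|\xi|^s$ being immaterial to every estimate. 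Stein's interpolation theorem applied to the pairs $(H^1,L^2)$ and $(L^2,BMO)$ then gives, at the real point $z=-ns|1/p-1/2|$, the boundedness of $e^{i|D|^s}\zeta(D)|D|^{-ns|1/p-1/2|}$ on $L^p=H^p$ for $1<p<\infty$; together with the three endpoint spaces this is the ``if'' part for all $1\le p\le\infty$, and the remaining range $m<-ns|1/p-1/2|$ follows by composing with the harmless operator $\zeta(D)|D|^{m+ns|1/p-1/2|}$.

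\textbf{The main estimate $H^1\to L^1$.} I would argue on atoms. Decompose $\zeta(\xi)|\xi|^{z}=\sum_{j\ge 0}\psi_j(\xi)$ into a Littlewood--Paley sum with $\psi_j$ supported where $|\xi|\sim 2^j$, and let $K_j$ denote the convolution kernel of $e^{i|D|^s}\psi_j(D)$. The substitution $\xi=2^j\eta$ writes $K_j(x)$ as $2^{jn}2^{-jns/2}$ times $\int e^{i(2^j x\cdot\eta+2^{js}|\eta|^s)}\widetilde\psi(\eta)\,d\eta$ with $\widetilde\psi\in C^\infty_c$ supported in $\{|\eta|\sim 1\}$; dividing the phase by $2^{js}$ gives $\Phi(\eta)=2^{j(1-s)}x\cdot\eta+|\eta|^s$, whose Hessian is nondegenerate on $\{|\eta|\sim 1\}$ \emph{precisely because} $s\ne 1$. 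Integration by parts away from the critical set and stationary phase near it then give, for $0<s<1$,
\[
|K_j(x)|\lesssim 2^{jn(1-s)}\bigl(1+2^{j(1-s)}|x|\bigr)^{-M},\qquad
|\nabla K_j(x)|\lesssim 2^{j}\,2^{jn(1-s)}\bigl(1+2^{j(1-s)}|x|\bigr)^{-M}
\]
for every $M$ (with a symmetric statement for $1<s<\infty$ in which the roles of the origin and of infinity are interchanged, and with constants polynomial in $|\mathrm{Im}\,z|$). Thus $e^{i|D|^s}\psi_j(D)$ behaves like an averaging operator on the spatial scale $2^{-j(1-s)}$ while carrying an internal oscillation on the finer scale $2^{-j}$. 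For an $H^1$-atom $a$ on a ball of radius $r$ I would bound $\sum_j\|e^{i|D|^s}\psi_j(D)a\|_{L^1}$ by splitting the sum according to the position of $r$ relative to these two scales: when $r\gtrsim 2^{-j(1-s)}$, by the $L^2$-bound $\|\psi_j\|_\infty\|a\|_2\lesssim 2^{-jns/2}r^{-n/2}$ together with the spatial localization of $K_j$; when $r\lesssim 2^{-j}$, by pairing the mean-zero property of $a$ against the gradient bound for $K_j$. The delicate case is the intermediate range $2^{-j}\lesssim r\lesssim 2^{-j(1-s)}$, where the kernel envelope is wider than the atom but $K_j$ oscillates many times across it, so that neither the size bound nor first-order cancellation alone is enough and one must genuinely exploit the oscillation of $K_j$ over $\supp a$; this is the technical heart of the argument, carried out in \cite{Sjolin-MathZ, Miyachi-multiplier} by means of precise kernel estimates and a phase-adapted change of variables, and checking that the resulting partial sums combine to $O(1)$ with the advertised power of $|\mathrm{Im}\,z|$ is the step I expect to be the main obstacle.

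\textbf{Necessity.} I would test the operator on $f_N$ with $\widehat{f_N}(\xi)=\phi(\xi/N)$, where $\phi\in C^\infty_c$ is supported in an annulus $|\xi|\sim 1$ and bounded below on a sub-annulus in every direction. Since $\supp\widehat{f_N}$ avoids the origin, $f_N$ has mean zero, and a scaling computation gives $\|f_N\|_{H^p}\sim N^{n(1-1/p)}$ for $1\le p<\infty$ (a constant when $p=1$). For $N$ large, $\zeta\equiv 1$ on $\supp\widehat{f_N}$, and the substitution $\xi=N\eta$ gives $(e^{i|D|^s}\zeta(D)|D|^m)f_N(x)=c\,N^{n+m}g_N(Nx)$ with $g_N(y)=\int e^{i(y\cdot\eta+N^s|\eta|^s)}|\eta|^m\phi(\eta)\,d\eta$. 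The phase $y\cdot\eta+N^s|\eta|^s$ has a nondegenerate critical point inside $\supp\phi$ whenever $|y|\sim N^s$ and $-y/|y|$ lies in the cone of directions present in $\supp\phi$, the Hessian there being of size $\sim N^s$; stationary phase thus yields $|g_N(y)|\gtrsim N^{-ns/2}$ on a set of measure $\gtrsim N^{ns}$, whence $\|g_N\|_{L^p}\gtrsim N^{ns(1/p-1/2)}$ and $\|(e^{i|D|^s}\zeta(D)|D|^m)f_N\|_{L^p}\gtrsim N^{n(1-1/p)+m+ns(1/p-1/2)}$. Comparing with $\|f_N\|_{H^p}$ (using $\|g\|_{L^1}\lesssim\|g\|_{H^1}$ when $p=1$) and letting $N\to\infty$ forces $m\le -ns(1/p-1/2)$ for $1\le p\le 2$. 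The cases $2\le p<\infty$ and $p=\infty$ follow by duality: boundedness on $H^p$, resp.\ on $BMO$, is equivalent to boundedness of the adjoint $\overline{\sigma}(D)$ on $L^{p'}$, resp.\ on $H^1$, and $\overline{\sigma}(D)$ is again of the same type, so the bound already proved applies and yields $m\le -ns|1/p-1/2|$ in all cases.
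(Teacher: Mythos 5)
The paper does not prove Theorem~A --- it is cited from Sj\"olin and Miyachi, and the closest internal machinery is Proposition~\ref{Kj_psi} (the stationary-phase kernel estimate for $e^{i|\xi|^s}\psi(2^{-j}\xi)$) and Lemma~\ref{Sj_L2}. So there is no ``paper's own proof'' to compare against; I can only assess your outline against the known strategy that the references and the paper's related lemmas embody.

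Your outline is consistent with that strategy. The interpolation skeleton (trivial $L^2$ bound on $\mathrm{Re}\,z=0$, an $H^1\to H^1$ bound on $\mathrm{Re}\,z=-ns/2$ obtained from $H^1\to L^1$ via commutation with Riesz transforms, then $BMO$ by duality, then Stein interpolation) is sound, and composing with $\zeta(D)|D|^{m+ns|1/p-1/2|}$ to handle $m$ strictly below the critical order is fine since that factor is a Mikhlin multiplier. Your kernel bound $|K_j(x)|\lesssim 2^{jn(1-s)}(1+2^{j(1-s)}|x|)^{-M}$ (with the $|\xi|^{-ns/2}$ factor absorbed into $\psi_j$) agrees with Proposition~\ref{Kj_psi}, where the envelope without that factor has height $2^{j(n-ns/2)}$ at the critical scale $|x|\sim 2^{-j(1-s)}$ and rapid decay on either side. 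The necessity computation also checks out: $\|f_N\|_{H^p}\sim N^{n(1-1/p)}$, $\|g_N\|_{L^p}\gtrsim N^{ns(1/p-1/2)}$ by stationary phase on the annulus $|y|\sim N^s$, and duality handles $p\ge 2$ since the adjoint multiplier is of the same form.

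The proposal is nevertheless not a proof, and you have yourself put your finger on the gap. In the intermediate regime $2^{-j}\lesssim r\lesssim 2^{-j(1-s)}$ (present precisely because $s\ne 1$), the number of scales $j$ involved is $\sim\tfrac{s}{1-s}\log(1/r)$, which is unbounded as $r\to 0$. The raw $L^2$ bound gives $\|K_j\!*\!a\|_{L^1(B(0,C2^{-j(1-s)}))}\lesssim (2^jr)^{-n/2}\le 1$ per scale, and the first-order cancellation bound gives $\lesssim 2^jr\ge 1$ per scale; neither sums. Closing this requires genuinely new input --- the references do it by exploiting the oscillation of $K_j$ across $\mathrm{supp}\,a$ with a more refined argument, and the present paper's own device for the analogous bilinear difficulty is the fractional-cancellation interpolation $\min\{(2^jr)^t,(2^jr)^{-nt/2}\}$ in Lemma~\ref{Sj_L2}\,\eqref{Sj_L2_Rn} together with the weighted $L^2$ decay in Lemma~\ref{Sj_L2}\,\eqref{Sj_L2_torus}, whose exponents are then balanced so that the geometric sum over $j$ converges. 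Your proposal names this as ``the step I expect to be the main obstacle'' and defers it wholesale to \cite{Sjolin-MathZ,Miyachi-multiplier}; that is exactly the part that carries the theorem, so as it stands the argument is an accurate road map rather than a proof.

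A smaller point worth flagging: you should be explicit that the constants in the $H^1\to L^1$ estimate must grow at most polynomially (or exponentially of order less than one) in $|\mathrm{Im}\,z|$ to license Stein interpolation. This is true here because the $\mathrm{Im}\,z$-dependence enters only through finitely many derivatives of $|\xi|^{i\,\mathrm{Im}\,z}$, but since you are invoking the interpolation theorem it should be said, not assumed.
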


Here, $H^p$, $0 < p \le \infty$, denotes the Hardy space 
and the space $BMO$ denotes the space of bounded mean oscillation.
It is known that $H^p = L^p$ if $1 < p \le \infty$ 
and $H^1 \hookrightarrow L^1$.
For details on these function spaces, 
see, {\it e.g.,} \cite[Chapters $\III$ and $\IV$]{Steinbook}.

Next, we shall consider the bilinear case.
For a bounded function $\sigma = \sigma(\xi, \eta)$ on $\R^{2n}$,
the bilinear Fourier multiplier operator $T_{\sigma}$
is defined by
\[
T_{\sigma} (f, g)(x)
=\frac{1}{(2\pi)^{2n}}
\iint_{\R^n\times \R^n} 
e^{i x \cdot(\xi + \eta)} \, 
\sigma (\xi, \eta) \,
\widehat{f}(\xi)\, 
\widehat{g}(\eta)
\, d\xi d\eta,
\quad
x \in \R^n,
\]
for $f, g \in \calS(\R^n)$. 
For function spaces on $\R^n$, $X$, $Y$ and $Z$
equipped with the quasi-norms $\|\cdot\|_X$, $\|\cdot\|_Y$ and $\|\cdot\|_Z$,
respectively,
we say that $T_{\sigma}$ is bounded from $X \times Y$ to $Z$, 
or $T_{\sigma}$ is bounded in $X \times Y \to Z$
if there exists $C > 0$ such that
\begin{align*}
\|T_{\sigma}(f, g)\|_{Z}
\le
C
\|f\|_{X}
\|g\|_{Y}
\,\,\,
\text{for all}
\,\,\,
f \in \calS \cap X
\,\,\,
\text{and all}
\,\,\,
g \in \calS \cap Y.
\end{align*}
We define the operator norm $\|T_{\sigma}\|_{X \times Y \to Z}$
to be the smallest constant $C$ in the above inequality.

In this paper, we especially consider  the bilinear Fourier multiplier operator 
$T^{s}_{\sigma}$, $0 < s < \infty$, of the following form:
\[
T_{\sigma}^{s} (f, g)(x)
=\frac{1}{(2\pi)^{2n}}
\iint_{\R^n\times \R^n} 
e^{i x \cdot(\xi + \eta)} \, 
e^{i (|\xi|^{s} + |\eta|^{s})} \, 
\sigma (\xi, \eta) \,
\widehat{f}(\xi)\, 
\widehat{g}(\eta)
\, d\xi d\eta,
\quad
x \in \R^n,
\]
for $f, g \in \calS(\R^n)$.

In order to describe the results on the bilinear operators of this type,
we define the class $S^{m}_{1, 0}(\R^{2n})$ as follows.

\begin{defn}\label{def-Sm10} 
For $m \in \R$, 
the class $S^{m}_{1,0} (\R^{2n})$ is defined to be the set of all 
$C^{\infty}$ functions $\sigma = \sigma (\xi, \eta)$ 
on $\R^{2n}$ 
that satisfy the estimate 
\begin{equation*}
\big|
\partial^{\alpha}_{\xi} 
\partial^{\beta}_{\eta} 
\sigma (\xi, \eta)
\big| 
\le 
C_{\alpha, \beta} \big( 1+ |\xi| + |\eta| \big)^{m-|\alpha|-|\beta|}
\end{equation*}
for all multi-indices 
$\alpha, \beta \in (\N_0)^{n} = (\{ 0,1,2, \dots \})^{n}$. 
\end{defn}

For the case $s=1$, 
Grafakos--Peloso \cite{GP-JPDOA}
first gave
the boundedness results for such kind of operators,
which were developed in the series of the papers
\cite{RS-JFA, RRS-AM, RRS-TAMS}
by the authors S.~Rodr\'iguez-Lop\'ez, D. Rule, 
and W. Staubach.
Quite recently, the first, the second and the last authors of the present paper 
improve these results in \cite{KMT-wave}.
Although the present paper is inspired by \cite{KMT-wave}, 
since our subject concerns with the case $s \neq 1$,
we omit to mention the details on the results of
\cite{RS-JFA, RRS-AM, RRS-TAMS, KMT-wave}.

For the case $s \neq 1$, 
Bergfeldt--Rodr\'iguez-Lop\'ez--Rule--Staubach \cite{BRRS-TAMS} 
recently considered the bilinear operator $T_{\sigma}^s$,
and proved the following theorem.
\begin{thm} [{\cite[Theorem 1.4 and Remark 1.5]{BRRS-TAMS}}] 
\label{thm_BRRS}
Let $0 < s < 1$ or $1 < s < \infty$
and let
$1\le p, q\le \infty$ and $1/r = 1/p+1/q$.
Suppose that
$\sigma \in S^{m}_{1,0} (\R^{2n})$
with
$m = -ns \big( | {1}/{p} - {1}/{2} | 
+ | {1}/{q} - {1}/{2} | \big)$.
Then,
$T_{\sigma}^{s}$ is bounded from  $H^{p} \times H^{q}$ to $L^{r}$,
where $L^r$ should be replaced by $BMO$ when $r=\infty$. 
\end{thm}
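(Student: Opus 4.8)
The plan is to peel the oscillation off $T^{s}_{\sigma}$ and transfer it onto two linear operators covered by Theorem A. Put $m_{1}=-ns|1/p-1/2|$, $m_{2}=-ns|1/q-1/2|$, so that the hypothesis reads $m=m_{1}+m_{2}$, and set $A_{p}=e^{i|D|^{s}}\zeta(D)|D|^{m_{1}}$, $A_{q}=e^{i|D|^{s}}\zeta(D)|D|^{m_{2}}$; by Theorem A, applied at its critical exponent, $A_{p}$ is bounded on $H^{p}$ and $A_{q}$ on $H^{q}$ ($BMO$ when the exponent is $\infty$). After splitting $\sigma$, the part supported where at least one of $|\xi|,|\eta|$ is $O(1)$ is handled by the same scheme described below (using a ball cutoff in the small variable, where the corresponding power $|\cdot|^{-m_{i}}$ is comparable to $1$), apart from the genuinely low-frequency part $|\xi|+|\eta|=O(1)$: this is a compactly supported bilinear multiplier — of finite smoothness only when $0<s<1$, because of the conic singularity of $|\xi|^{s},|\eta|^{s}$ at the origin — for which one invokes (and, at the $H^{1}$ endpoint, checks by hand the mean-value and molecular estimates for) the finite-smoothness bilinear Hörmander-type multiplier bounds on Hardy/$BMO$ spaces. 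For the remaining part we may assume $\sigma$ is supported in $\{|\xi|\ge2\}\cap\{|\eta|\ge2\}$, where both $\zeta$'s equal $1$; there one has the exact identity
\[
T^{s}_{\sigma}(f,g)=T_{\tau}(A_{p}f,A_{q}g),\qquad \tau(\xi,\eta):=\sigma(\xi,\eta)\,|\xi|^{-m_{1}}\,|\eta|^{-m_{2}},
\]
since $e^{i|\xi|^{s}}|\xi|^{m_{1}}$ and $e^{i|\eta|^{s}}|\eta|^{m_{2}}$ coming from $A_{p},A_{q}$ cancel the matching factors in $\tau$ and in $e^{i(|\xi|^{s}+|\eta|^{s})}$. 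It therefore suffices to prove $T_{\tau}\colon H^{p}\times H^{q}\to L^{r}$ (to $BMO$ when $r=\infty$) for this $\tau$.

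The symbol $\tau$ is bounded but is not of class $S^{0}_{1,0}(\R^{2n})$: where $|\eta|\ll|\xi|$, the factor $|\eta|^{-m_{2}}$ makes $\eta$-derivatives gain only $|\eta|^{-1}$ rather than $(|\xi|+|\eta|)^{-1}$. Accordingly I would split $\tau=\tau^{\Delta}+\tau^{(1)}+\tau^{(2)}$ by smooth homogeneous cutoffs to the regions $|\xi|\sim|\eta|$, $|\eta|\ll|\xi|$, and $|\xi|\ll|\eta|$. On the diagonal $|\xi|\sim|\eta|\sim1+|\xi|+|\eta|$ the powers cause no harm, so $\tau^{\Delta}\in S^{0}_{1,0}(\R^{2n})$ and $T_{\tau^{\Delta}}$ is controlled by the standard bilinear Calderón–Zygmund (Coifman–Meyer) theory on the Hardy/$BMO$ scale for every $1\le p,q\le\infty$ with $1/r=1/p+1/q$. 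For an off-diagonal piece, say $\tau^{(1)}$ on $|\eta|\ll|\xi|$, a Littlewood–Paley decomposition $\tau^{(1)}=\sum_{j>i}\tau_{j,i}$ (with $\tau_{j,i}$ supported in $|\xi|\sim2^{j}$, $|\eta|\sim2^{i}$, for which the symbol estimates give the gain $|\partial_{\xi}^{\alpha}\partial_{\eta}^{\beta}\tau_{j,i}|\lesssim 2^{(i-j)|m_{2}|}\,2^{-j|\alpha|}\,2^{-i|\beta|}$), followed by a Fourier-series expansion of each $\tau_{j,i}$ on its product block, expresses $T_{\tau^{(1)}}(F,G)$ — with $F=A_{p}f$, $G=A_{q}g$, and translations absorbed with polynomial loss through Peetre maximal functions — as a rapidly convergent combination of generalized paraproducts $\sum_{j>i}c_{j,i}\,\Delta_{j}F\cdot\Delta_{i}G$ with $\sum_{i}|c_{j,i}|$ bounded uniformly in $j$ and $c_{j,i}$ decaying geometrically in $j-i$ (when $m_{2}=0$, i.e.\ $q=2$, $\tau^{(1)}$ is itself in $S^{0}_{1,0}$ and is handled by Coifman–Meyer). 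Since the $j$-th summand has output frequency $\sim2^{j}$, the Littlewood–Paley inequality $\|\sum_{j}h_{j}\|_{L^{r}}\lesssim\|(\sum_{j}|h_{j}|^{2})^{1/2}\|_{L^{r}}$ applies; bounding the summands pointwise by $M(\Delta_{j}F)\cdot\sum_{i<j}2^{(i-j)|m_{2}|}M(\Delta_{i}G)$ and using Cauchy–Schwarz in $i$, Hölder for $1/r=1/p+1/q$, the Fefferman–Stein vector-valued maximal inequality, and the square-function characterizations of $H^{p}$ and $H^{q}$, one concludes with $\|F\|_{H^{p}}\lesssim\|f\|_{H^{p}}$, $\|G\|_{H^{q}}\lesssim\|g\|_{H^{q}}$; the region $|\xi|\ll|\eta|$ is symmetric.

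The main obstacle — and the only genuinely delicate point — is carrying all of this out at the endpoints $p,q\in\{1,\infty\}$ (hence possibly $r<1$, or $r=\infty$) simultaneously: because $m$ is the critical exponent the dyadic sums are only borderline convergent, and convergence is salvaged precisely by the almost-orthogonality of the output frequencies in the off-diagonal regions together with the geometric decay in $j-i$. When $p$ or $q$ equals $1$, the Fefferman–Stein vector-valued maximal inequality fails at exponent $1$ and must be replaced by Peetre maximal functions of a suitable fractional order combined with the $H^{1}$ square-function characterization; when $p$ or $q$ equals $\infty$ (the corresponding input lies in $BMO$), the diagonal and off-diagonal paraproducts must be estimated via Carleson-measure / tent-space inequalities rather than pointwise maximal bounds; and when $r=\infty$ the output must be collected in $BMO$ throughout, using the $BMO$-boundedness of paraproducts. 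Assembling $\tau^{\Delta}$, $\tau^{(1)}$, $\tau^{(2)}$, and the low-frequency piece then yields the asserted boundedness.
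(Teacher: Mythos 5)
Your overall architecture matches the paper's: write $m_1=-ns|1/p-1/2|$, $m_2=-ns|1/q-1/2|$, peel the oscillation off by factoring $e^{i(|\xi|^s+|\eta|^s)}\sigma$ into linear multipliers $e^{i|D|^s}\zeta(D)|D|^{m_i}$ covered by Theorem~A and the flag-type bilinear symbol $|\xi|^{-m_1}|\eta|^{-m_2}\sigma(\xi,\eta)$, with $\varphi/\zeta$ cutoffs to separate high and low frequency in each variable. Where the paper then simply cites Propositions~\ref{bdd_flagparapro1} and~\ref{bdd_flagparapro2} (from \cite{KMT-wave}) to bound the flag symbol on the Hardy/$BMO$ scale, you propose to reprove these from scratch via diagonal/off-diagonal splitting, Littlewood--Paley, Fourier-series expansion, Peetre maximal functions, and Carleson-measure/tent-space estimates at the endpoints. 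That program is consistent in outline with how such flag-paraproduct bounds are in fact established, but it is only sketched; essentially the entire technical content of those two propositions is compressed into a paragraph, including the delicate endpoint cases $p,q\in\{1,\infty\}$, so the flag-piece part of your proof is incomplete as written.

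There is, moreover, a genuine gap in your treatment of the purely low-frequency piece $\varphi(\xi)\varphi(\eta)$. You propose to view $e^{i(|\xi|^s+|\eta|^s)}\sigma(\xi,\eta)\varphi(\xi)\varphi(\eta)$ as a compactly supported bilinear multiplier and to invoke ``finite-smoothness bilinear H\"ormander-type multiplier bounds.'' This does not work in general: the factor $e^{i|\xi|^s}$ is only $C^{\lfloor s\rfloor}$ near $\xi=0$ (and for $0<s<1$ merely H\"older of order $s$), so for small $s$ and large $n$ the symbol has far too few derivatives for any standard bilinear H\"ormander-type criterion, which demands on the order of $n/2$ derivatives in each block of variables. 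Your aside that the smoothness is finite ``only when $0<s<1$'' is also incorrect --- $|\xi|^s$ has finite smoothness at the origin for every $s$ that is not an even integer. The paper sidesteps all of this cleanly: it observes that $(e^{i|\xi|^s}\varphi(\xi))^\vee$ decays like $(1+|x|)^{-(n+s)}$ and hence lies in $L^1(\R^n)$ for every $s>0$ (see \eqref{Linq}), so $e^{i|D|^s}\varphi(D)$ is bounded on every $H^p$, $1\le p\le\infty$, and the remaining factor $\sigma\in\dotS^0_{1,0}$ is handled by Coifman--Meyer. You should replace your finite-smoothness argument for the low-frequency piece with this factorization; otherwise your proof fails for small $s$.
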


Here we give a remark on Theorem \ref{thm_BRRS}.
The verbatim statement of \cite[Theorem 1.4 and Remark 1.5]{BRRS-TAMS} 
contains the restriction $r > \frac{n}{n+ \min\{1, s\}}$. 
However, if we carefully read the paper, 
we see that this restriction can be removed. 
For the reader's convenience, 
we shall give an independent 
proof of Theorem \ref{thm_BRRS} in Section \ref{prf_BRRS}. 

Now, the purpose of this paper is to give an 
improvement of Theorem \ref{thm_BRRS}.
To state our main result, we prepare some notations.
We divide the set $\{ 1 \le p,q \le \infty \} \subset \R^2$ 
into the following six subsets:
\begin{align*} 
\RomI 
&= \{ 2 \le p,q \le \infty \} ,
\\
\II
&= \{ 1 \le p,q \le 2 \} ,
\\
\III
&= 
\{ 1 \le p \le 2 \le q \le \infty \;\;\textrm{and}\;\; 1/p+1/q \le 1 \} ,
\\
\IV
&=
\{ 1 \le p \le 2 \le q \le \infty \;\;\textrm{and}\;\; 1/p+1/q \ge 1 \} ,
\\
\RomV
&= 
\{ 1 \le q \le 2 \le p \le \infty \;\;\textrm{and}\;\; 1/p+1/q \le 1 \},
\\
\VI
&= \{ 1 \le q \le 2 \le p \le \infty \;\;\textrm{and}\;\; 1/p+1/q \ge 1 \} ,
\end{align*}
which satisfy $\RomI \cup \II \cup \III \cup \IV \cup \RomV \cup \VI = \{ 1 \le p,q \le \infty \} $
and are assigned into the picture below.
\begin{center}
\begin{tikzpicture}[scale=0.75]
   \draw [thick, -stealth](-0,0)--(7,0) node [anchor=north]{$1/p$};
   \draw [thick, -stealth](0,-0)--(0,7) node [anchor=east]{$1/q$};
   \node [anchor=east] at (0,0) {0};
   \draw [thick](3,0)--(3,6);
   \draw [thick](0,3)--(6,3);
   \draw [thick](6,0)--(6,6);
   \draw [thick](0,6)--(6,6);
   \draw [thick](0,6)--(6,0);
   \node [anchor=north] at (3,0) {1/2};
   \node [anchor=east] at (0,3) {1/2};
   \node [anchor=north] at (6,0) {1};
   \node [anchor=east] at (0,6) {1};
   \node [font=\normalsize] at (1.5,1.5) {$\RomI$};
   \node [font=\normalsize] at (4.5,4.5) {$\II$};
   \node [font=\normalsize] at (5,2) {$\IV$};
   \node [font=\normalsize] at (1,4) {$\RomV$};
   \node [font=\normalsize] at (2,5) {$\VI$};
   \node [font=\normalsize] at (4,1) {$\III$};
\end{tikzpicture} 
\end{center}
Using these sets, 
we define 
$m_s (p,q)$
by
\begin{align*} &
m_s (p,q)
= 
\begin{cases}
-ns \big( | \frac{1}{p} - \frac{1}{2} |
 + | \frac{1}{q} - \frac{1}{2} | \big)
	& \textrm{for}\;\; 
	(p,q) \in \RomI \cup \II, 
\\
-ns(1-s) \big| \frac{1}{p} - \frac{1}{2} \big|
-ns \big| \frac{1}{q} - \frac{1}{2} \big|
	& \textrm{for}\;\; 
	(p,q) \in \III \cup \VI,
\\
-ns | \frac{1}{p} - \frac{1}{2} |
-ns (1-s)| \frac{1}{q} - \frac{1}{2} | 
	& \textrm{for}\;\; 
	(p,q) \in \IV \cup \RomV, 
\\
\end{cases}
\quad
\textrm{when}\;\;
0 < s < 1, 
\end{align*}
and
\begin{align*}
m_s(p, q)
= 
\begin{cases}
-ns \big( | \frac{1}{p} - \frac{1}{2} |
 + | \frac{1}{q} - \frac{1}{2} | \big)
	& \textrm{for}\;\; 
	(p,q) \in \RomI \cup \II, 
\\
-ns \big| \frac{1}{q} - \frac{1}{2} \big|
	& \textrm{for}\;\; 
	(p,q) \in \III \cup \VI, 
\\
-ns \big| \frac{1}{p} - \frac{1}{2} \big|
	& \textrm{for}\;\; 
	(p,q) \in \IV \cup \RomV, 
\\
\end{cases}
\quad
\textrm{when}\;\;
1 < s < \infty .
\end{align*}

The  main result of this paper reads as follows.

\begin{thm} \label{thm_main}
Let 
$0 < s < 1$ or $1 < s < \infty$
and let $1\le p, q\le \infty$ and $1/r = 1/p+1/q$.
Suppose that
$\sigma \in S^{m}_{1,0} (\R^{2n})$
with
$m = m_s (p,q)$.
Then $T_{\sigma}^{s}$ is bounded 
from $H^{p} \times H^{q}$ to $L^{r}$,
where $L^r$ should be replaced by $BMO$ when $r=\infty$. 
\end{thm}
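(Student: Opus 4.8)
\textbf{Proof idea for Theorem~\ref{thm_main}.}
The plan is to localize $\sigma$ to dyadic frequency shells, prove for each shell a single-scale estimate that is summable in the shell parameter, and then recover the six polygonal regions by bilinear interpolation anchored at a handful of distinguished exponents.

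First I would fix a Littlewood--Paley decomposition and write $\sigma=\sum_\lambda\sigma_\lambda$, the sum over dyadic $\lambda\ge1$, with $\sigma_\lambda$ supported where $|\xi|+|\eta|\sim\lambda$ and the shell $\lambda=1$ absorbing the part near the origin. By Definition~\ref{def-Sm10}, each $\lambda^{-m}\sigma_\lambda$ obeys $S^{0}_{1,0}$ bounds uniformly in $\lambda$, so it suffices to control the single-scale operators: writing $\tau_\lambda$ for a symbol in $S^{0}_{1,0}$ supported on the shell $\lambda$, one wants
\[
\big\|T^{s}_{\tau_\lambda}(f,g)\big\|_{L^r}\lesssim\lambda^{-m_s(p,q)}\,\omega(\lambda)\,\|f\|_{H^p}\|g\|_{H^q},\qquad\sum_\lambda\omega(\lambda)<\infty,
\]
with a genuinely summable factor $\omega(\lambda)$ and not merely $\lambda^{-m_s(p,q)}$, since the theorem asks for the critical order $m=m_s(p,q)$ and not only $m<m_s(p,q)$. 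The natural anchor is $(p,q,r)=(2,2,1)$, where $m_s(2,2)=0$: substituting $\widehat f(\xi)\mapsto e^{-i|\xi|^{s}}\widehat f(\xi)$ and $\widehat g(\eta)\mapsto e^{-i|\eta|^{s}}\widehat g(\eta)$, an isometry of $L^2$, turns $T^{s}_{\sigma}$ into the plain bilinear multiplier $T_\sigma$ with $\sigma\in S^{0}_{1,0}$, which is bounded from $L^2\times L^2$ to $L^1$ by the Coifman--Meyer multiplier theorem; this works for every $s>0$, and the matching summable single-scale estimate at $(2,2)$ follows from Littlewood--Paley orthogonality.

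For a fixed shell I would split further according to the three frequency regimes $|\eta|\ll|\xi|$, $|\xi|\ll|\eta|$ and $|\xi|\sim|\eta|\sim\lambda$. On an unbalanced regime, say $|\eta|\sim\mu\ll|\xi|\sim\lambda$, a Fourier series expansion of the symbol in the rescaled low variable writes this part of $T^{s}_{\tau_\lambda}$ as a rapidly convergent superposition of tensor products $\big(e^{i|D|^{s}}a(D)\big)f\cdot\big(e^{i|D|^{s}}b(D)\big)g$ with $a$ localized at frequency $\lambda$ and $b$ at frequency $\mu$, whereupon H\"older's inequality and the linear estimate of Theorem~A reduce the matter to the cost of $e^{i|D|^{s}}$ on the two factors. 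What carries one past the one-variable-at-a-time argument (which only yields Theorem~\ref{thm_BRRS}) is that the low-frequency factor, being absorbed into the high-frequency output $|\xi+\eta|\sim\lambda$, need not be charged the full linear price of $e^{i|D|^{s}}$ but only part of it; summing the resulting geometric series in $\mu\le\lambda$ then produces the reduced coefficient $ns(1-s)$ in place of $ns$ when $0<s<1$, and the outright removal of that term when $s>1$. On the balanced regime I would argue instead through pointwise bounds for the convolution kernel of $e^{i|D|^{s}}\zeta(D)$ restricted to a frequency annulus --- the stationary-phase (Miyachi-type) estimates behind Theorem~A, which confine that kernel to $|x|\lesssim\lambda^{s-1}$ with rapidly decaying tails --- interpolated against the $L^2$ bound furnished by the unimodular substitution above.

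It remains to establish the single-scale estimate at the vertices of the six polygons in the $(1/p,1/q)$-plane, i.e.\ the points with both coordinates in $\{0,1/2,1\}$; these are reached from the $(2,2,1)$ anchor and from the linear endpoints of Theorem~A, and from one another by bilinear duality, which ties the boundedness of $T^{s}_{\sigma}$ from $H^p\times H^q$ to $L^r$ to that of its two transposed operators and so permits permuting the triple $(1/p,1/q,1/r')$. Since $m_s$ is affine in $(1/p,1/q)$ on each polygon and the vertex estimates are sharp, bilinear complex interpolation --- Stein's analytic-family method applied to $\tau_\lambda\mapsto(1+|\xi|+|\eta|)^{z}\tau_\lambda$ --- delivers exactly $m=m_s(p,q)$ throughout, the dichotomy $1/p+1/q\le1$ versus $1/p+1/q\ge1$ that subdivides the four mixed regions being exactly the one that determines which of the transposed operators is the convenient one to estimate; summing over $\lambda$ with the gain $\omega(\lambda)$ then closes the argument. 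I expect the chief obstacle to be the single-scale estimate at the critical order near the Hardy endpoints $p$ or $q\in\{1,\infty\}$ and for $r<1$, where one cannot dualize into $L^{r'}$ and must work directly with an atomic or molecular decomposition of $H^p$ (or $H^q$) together with the precise oscillatory kernel bounds; a further genuine technical point is to extract the summable factor $\omega(\lambda)$ so that the estimate holds at the endpoint order $m=m_s(p,q)$, not merely below it.
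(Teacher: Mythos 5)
The most concrete gap is your appeal to bilinear duality to permute the triple $(1/p,1/q,1/r')$. That trick works for plain Coifman--Meyer multipliers, but $T^{s}_{\sigma}$ is not transpose-friendly: if $\sigma(\xi,\eta)=e^{i(|\xi|^{s}+|\eta|^{s})}\tau(\xi,\eta)$, then the symbol of the first transpose is $e^{i(|\xi+\eta|^{s}+|\eta|^{s})}\tau(-\xi-\eta,\eta)$, which is not of the product form $e^{i(|\xi|^{s}+|\eta|^{s})}\cdot(\text{smooth})$ that the whole machinery (Theorem~A in one variable at a time, the per-scale kernel bounds, etc.) is built to handle. So you cannot reach the vertices with $1/p+1/q\ge1$ from those with $1/p+1/q\le1$ this way, and the claim that the dichotomy ``determines which transpose is convenient'' presupposes a symmetry the operator does not have. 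The paper sidesteps this entirely by never dualizing: it proves the single endpoint $h^{1}\times L^{\infty}\to L^{1}$ directly (Theorem~\ref{thm_h1LinftyL1}), gets $L^{\infty}\times h^{1}\to L^{1}$ by the obvious $\xi\leftrightarrow\eta$ symmetry, and then interpolates against Theorem~\ref{thm_BRRS}, which already supplies the correct order $m_{s}(p,q)$ at the remaining vertices (e.g.\ $(p,q)=(2,\infty),(1,2),(2,2)$).

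A second, softer issue is that the mechanism you invoke for the improvement on the unbalanced regimes --- that the low-frequency factor ``need not be charged the full linear price'' and that summing over $\mu\le\lambda$ ``produces the reduced coefficient $ns(1-s)$'' --- is left as an assertion. As far as I can see it does not come from any paraproduct bookkeeping on the frequency side: in the paper the $(1-s)$ factor emerges from \emph{spatial} analysis. One fixes $f$ an $h^{1}$-atom of radius $r$ centered at the origin and $g\in L^{\infty}$, splits $\R^{n}_{x}$ into $\{|x|\le 2r\}$, dyadic annuli $\{2^{k}r\le|x|\le 2^{k+1}r\}$ up to scale $\approx 1$ (for $s<1$) or $\approx 2^{j(s-1)}$ (for $s>1$), and a far region; on the middle annuli the key inputs are the weighted-$L^{2}$ bound \eqref{K_j_no_1+weighedL^2}, the kernel decay $|K_{j}(x)|\lesssim|x|^{-n/2-n/2(1-s)}$ from Corollary~\ref{Kj_phi}, and the $t$-interpolation in Lemma~\ref{Sj_L2} that trades the atom's moment condition against its $L^{\infty}$ normalization. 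The geometric series that closes the argument is a sum over the spatial scale $2^{k}r$ (see \eqref{SSsmall<1_sum_1}--\eqref{SSsmall<1_sum_2}), not over a secondary frequency parameter $\mu$. You do gesture at the right tool (the Miyachi/Sj\"olin stationary-phase kernel bounds behind Theorem~A), but the way it is deployed is quite different from what you describe, and your sketch as written does not give a path to the sharp exponent on the unbalanced regimes.

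What you have right: the $(2,2)\to L^{1}$ anchor via the unimodular $L^{2}$ substitution is correct and is implicitly in the paper; the Coifman--Meyer-style paraproduct decomposition and Fourier-series expansion of the rescaled symbol is exactly what the paper does at the start of Section~\ref{Bdd_H1LinftyL1}; and you correctly flag that near the Hardy endpoints one must argue with atoms rather than dualize. But the duality step must be removed, the per-scale estimate should be formulated only at $(p,q)=(1,\infty)$ (not at all vertices), and the $(1-s)$ gain needs an actual spatial/kernel argument rather than the heuristic about charging only part of the linear price.
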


For $0 < s < 1$ or $1 < s < \infty$,
the number $m_s(p, q)$ is always bigger than or equal to 
the number $-ns(|1/p -1/2| + |1/q -1/2|)$
for all $1 \le p, q \le \infty$.
In particular, if 
$(p, q) \in \III \cup \IV \cup \RomV \cup \VI$,
then 
$m_s(p, q) > -ns(|1/p -1/2| + |1/q -1/2|)$ 
except for $p = 2$ or $q = 2$. 
In this sense, Theorem \ref{thm_main} improves Theorem \ref{thm_BRRS}.
Moreover, the number $m_{s}(p, q)$ defined above is 
optimal for some cases.
More precisely, the following theorem holds true.

\begin{thm} \label{thm_necessity}
Let $0 < s < 1$ or $1 < s < \infty$,
and let
$m \in \R$, 
$(p, q) \in \RomI \cup \II \cup \IV \cup \VI$, and $1/ r = 1/p + 1/q$.
Suppose that 
all $T_{\sigma}^{s}$ with 
$\sigma \in S^{m}_{1,0}(\R^{2n})$  
are bounded from $H^{p} \times H^{q}$ to $L^{r}$
with $L^r$ replaced by $BMO$ when $r = \infty$.
Then $m \le m_s (p,q)$. 
\end{thm}

It should be emphasized that the optimality for the case
$(p, q) \in \RomI \cup \II$ is already proved in 
\cite[Section 3.2]{BRRS-TAMS}. 
However, we shall also give the proofs of these cases,
which will be slightly different from the one in 
\cite[Section 3.2]{BRRS-TAMS}.

The rest of this paper is organized as follows.
In Section \ref{prf_BRRS}, 
we give a proof of Theorem \ref{thm_BRRS}.
In Section \ref{FourierTrans},
we consider 
the asymptotic behavior of 
the Fourier transform of 
functions including an oscillator
$e^{i|\xi|^s}$,
which will play important roles
in the proofs of Theorems \ref{thm_main}
and \ref{thm_necessity}.
In Section \ref{lemmas}, 
we prepare some lemmas 
which will be used 
in Section \ref{Bdd_H1LinftyL1}.
In Section \ref{Bdd_H1LinftyL1}, 
we prove 
the assertion 
of Theorem \ref{thm_main} 
in the end point case $(p, q) = (1, \infty)$,
which implies Theorem \ref{thm_main} 
with the aid complex interpolation.
In Section \ref{Nec_on_m}, 
we prove Theorem \ref{thm_necessity}.

We end this section by preparing some notations.

\begin{notation}\label{notation}

We denote by $\N$ and $\N_0$ 
the sets of positive integers and nonnegative integers, respectively.

The Fourier transform and the inverse Fourier transform 
on $\R^n$ are defined by 
\begin{align*}&
\widehat{f}(\xi)
=
\int_{\R^n}
e^{-i \xi \cdot x}
f(x)\, dx
\quad\textrm{and}\quad
(g)^{\vee}(x)
=
\frac{1}{(2\pi)^n} 
\int_{\R^n}
e^{i \xi \cdot x}
g(\xi)\, d\xi. 
\end{align*}

We take $\varphi, \psi \in \calS (\R^n)$ 
such that 
$\varphi = 1$ on 
$\{ |\xi| \le 1 \}$,
$\supp \varphi \subset 
\{ |\xi| \le 2 \}$,
$\supp \psi \subset 
\{ 1/2 \leq |\xi| \leq 2 \}$,
and
$\varphi+\sum_{j\in\N} \psi ( 2^{-j} \cdot ) = 1$.
In what follows, we will write
$\psi_{0} = \varphi$,
$\psi_j = \psi ( 2^{-j} \cdot )$ 
for $j \in \N$, 
and 
$\varphi_j = \varphi ( 2^{-j} \cdot )$ 
for $j \in \N_{0}$.
Then, we see that 
$\varphi_0 = \psi_0 = \varphi$ and
\begin{align*}
\sum_{j=0}^{k} \psi_j = \varphi_k,  
\quad k\in \N_0. 
\end{align*}
We define the $C^{\infty}$ function
$\zeta = 1 - \varphi$.
Then we have
$\partial^{\alpha} \zeta 
\in C^{\infty}_{0} (\R^n)$
for $|\alpha|\ge 1$,
$\zeta = \sum_{j \in \N} \psi_{j}$,
and
\begin{align*} &
\zeta =0 \;\; \text{on}\;\; \{ |\xi|\le 1 \},
\quad
\zeta =1 \;\; \text{on}\;\; \{ |\xi|\ge 2 \}.
\end{align*}

For a smooth function $\theta$ on $\R^n$ and 
for $N \in \N_{0}$, we write 
$\|\theta\|_{C^N}=
\max_{|\alpha|\le N} 
\sup_{\xi} \big| \partial_{\xi}^{\alpha} \theta (\xi) \big|$. 

Lastly, we recall 
the local Hardy space $h^1$
(for the definition of 
the local Hardy space $h^1$,
see Goldberg \cite{goldberg}).
It is known that
$H^1 \hookrightarrow h^1 \hookrightarrow L^1$.
As proved in \cite{goldberg},
all functions in $h^1$ 
can be decomposed
by so-called {\it atoms},
which satisfy that
\begin{equation} \label{atom_r>1}
\supp f \subset \{ y \in \R^n \mid |y-\bar{y}| \le r \} ,
\quad
\|f\|_{L^{\infty}}\le r^{-n} ,
\end{equation}
and, in addition,
if $r < 1$,
\begin{align} \label{atom_moment}
\int f(y)\, dy = 0 .
\end{align}
It is easily proved that, if $f$ satisfies only 
\eqref{atom_r>1} with $r \ge 1$,
then $f$ can be written as a linear combination of the atoms 
that satisfy \eqref{atom_r>1} with $r=1$
(see, {\it e.g.}, Miyachi--Tomita \cite{MT-IUMJ}).
In this paper,
a function $f$ on $\R^n$ 
is called
an {\it $h^1$-atom of first kind} 
if $f$ satisfies 
\eqref{atom_r>1}-\eqref{atom_moment} for $r < 1$,
and is called
an {\it $h^1$-atom of second kind} 
if $f$ satisfies \eqref{atom_r>1} for $r = 1$.
Atoms of both kinds are simply called {\it $h^1$-atoms}.

\end{notation}

\section{Proof of Theorem \ref{thm_BRRS}}
\label{prf_BRRS}

In this section, we shall give 
a proof of Theorem \ref{thm_BRRS}.
The ideas of the proof come from \cite[Proof of Theorem 1.3]{KMT-wave}.

For $d \in \N$ and $m \in \R$, 
the class $\dotS^{m}_{1, 0}(\R^d)$
consists of all $C^{\infty}$ functions $\sigma$ on $\R^d \setminus \{0\}$
such that 
\[
|\partial^\alpha_{\xi} \sigma(\xi)|
\le
C_{\alpha}
|\xi|^{m-|\alpha|},
\quad
\xi \in \R^d \setminus \{0\}
\]
for all multi-indices $\alpha \in (\N_0)^d$.
We use the notation $X_r$ given  by
\begin{align} \label{funcspXr}
X_r
=
\begin{cases}
L^r
&
\text{if}
\,\,\,\,
0 < r < \infty,
\\
BMO
&
\text{if}
\,\,\,\,
r = \infty.
\end{cases}
\end{align}

Now, we recall the boundedness result
for the bilinear Fourier multiplier operator $T_{\sigma}$ 
with $\sigma \in \dotS^{0}_{1, 0}(\R^{2n})$.
The following theorem is due to 
Coifman-Meyer \cite{CM-Ast, CM-AIF},
Kenig-Stein \cite{KS-MRL}, 
Grafakos-Kalton \cite{GK-CM},
and
Grafakos-Torres \cite{GT-AM}.

\begin{thm}
\label{Thm_S010}
Let $0 < p, q \le \infty$ and $1/r = 1/p + 1/q$.
If $\sigma \in \dotS^{0}_{1, 0}(\R^{2n})$, then
the bilinear Fourier multiplier operator 
$T_{\sigma}$ is bounded from
$H^p \times H^q$ to $X_r$.
\end{thm}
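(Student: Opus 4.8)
The plan is to read off from $\sigma\in\dotS^{0}_{1,0}(\R^{2n})$ that $T_\sigma$ is a bilinear Calder\'on--Zygmund operator, to obtain one ``base'' estimate — say $L^{2}\times L^{2}\to L^{1}$ — by a paraproduct decomposition, and then to upgrade to the full range of exponents by the multilinear Calder\'on--Zygmund theory of \cite{GT-AM} together with its Hardy space refinement \cite{GK-CM}.

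First I would carry out the paraproduct decomposition. After fixing a Littlewood--Paley partition of unity on $\R^{2n}$ and using the bounds defining $\dotS^{0}_{1,0}(\R^{2n})$, split $\sigma$ into three symbols subordinate to the regions $\{|\xi|\le\varepsilon|\eta|\}$, $\{|\eta|\le\varepsilon|\xi|\}$ and $\{\varepsilon|\eta|\le|\xi|\le\varepsilon^{-1}|\eta|\}$, each again in $\dotS^{0}_{1,0}(\R^{2n})$. By symmetry it suffices to treat the first: there $T_\sigma(f,g)=\sum_{j}T_{\sigma_j}\big(\varphi_{j-c}(D)f,\psi_j(D)g\big)$ with $\sigma_j$ supported where $|\xi|\lesssim2^{j}$, $|\eta|\sim2^{j}$, and $|\partial_\xi^{\alpha}\partial_\eta^{\beta}\sigma_j|\lesssim2^{-j(|\alpha|+|\beta|)}$ uniformly in $j$. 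Expanding each $\sigma_j$ into a Fourier series on a cube of side $\sim2^{j}$ turns the modulations into harmless translations of $f$ and $g$, so after summing an absolutely convergent series it suffices to bound the model paraproduct $\Pi(f,g)=\sum_{j}\big(\varphi_{j-c}(D)f\big)\big(\psi_j(D)g\big)$, the ``diagonal'' third symbol being handled by a dual variant of the same argument. Since the $j$-th summand of $\Pi$ has Fourier transform supported in $\{|\zeta|\sim2^{j}\}$, the classical vector valued Littlewood--Paley inequality (in the range $1\le r\le\infty$) gives
\[
\|\Pi(f,g)\|_{X_r}
\lesssim
\Big\|\Big(\sum_{j}\big|\big(\varphi_{j-c}(D)f\big)\big(\psi_j(D)g\big)\big|^{2}\Big)^{1/2}\Big\|_{L^r}
\le
\Big\|\,\sup_{j}\big|\varphi_j(D)f\big|\cdot\Big(\sum_{j}\big|\psi_j(D)g\big|^{2}\Big)^{1/2}\Big\|_{L^r},
\]
and then H\"older's inequality, the pointwise bound $\sup_{j}|\varphi_j(D)f|\lesssim Mf$ ($M$ the Hardy--Littlewood maximal operator), and the square function estimate $\|(\sum_{j}|\psi_j(D)g|^{2})^{1/2}\|_{L^q}\lesssim\|g\|_{L^q}$ settle the Banach cases $1<p,q<\infty$ with $1/p+1/q\le1$; in particular $T_\sigma$ maps $L^{2}\times L^{2}\to L^{1}$. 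The corner $p=q=\infty$ is the standard fact that a paraproduct with $L^\infty$ inputs maps into $BMO$, via the Carleson measure estimate for $\sum_{j}|\psi_j(D)g|^{2}$.

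For the remaining cases — the quasi-Banach range $r<1$ and the Hardy space endpoints $p\le1$ or $q\le1$ — I would first verify, directly from $\sigma\in\dotS^{0}_{1,0}(\R^{2n})$, that the kernel $K$ of $T_\sigma$, normalized so that $T_\sigma(f,g)(x)=\iint K(x-y,x-z)f(y)g(z)\,dy\,dz$, obeys the bilinear Calder\'on--Zygmund bounds $|\partial^{\gamma}K(u,v)|\lesssim(|u|+|v|)^{-2n-|\gamma|}$ away from the origin. Feeding these estimates and the base estimate $L^{2}\times L^{2}\to L^{1}$ into \cite{GT-AM} gives boundedness for all $1<p,q\le\infty$ and $1/2<r<\infty$, and the extension to $p\le1$ and/or $q\le1$ follows from \cite{GK-CM} by testing $T_\sigma$ on $H^{p}$- (or $h^1$-) atoms, using the cancellation of an atom against the decay of $K$ far from its support and the $L^{2}$ theory on the local part, with constants uniform in the atom. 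The hard part will be this last step: when one of $p,q$ is $\le1$ and, worst of all, the other equals $\infty$, the elementary maximal-plus-square-function estimate breaks down because the low-frequency slot of the paraproduct carries no maximal bound controlled by the relevant quasi-norm, so one must run the atomic argument carefully, in parallel with the diagonal paraproduct whose output is not Fourier localized to an annulus. Everything else — the symbol splitting, the Fourier expansion of the localized pieces, and the Banach-range estimates — is routine.
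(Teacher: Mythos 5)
The paper does not prove Theorem \ref{Thm_S010} at all: it is stated as a known result with references to \cite{CM-Ast, CM-AIF, KS-MRL, GK-CM, GT-AM}, and is used as a black box in the proof of Theorem \ref{thm_BRRS}. Your proposal is therefore not being measured against an in-paper argument, but it is a fair reconstruction of how the cited literature establishes the result: paraproduct decomposition and Fourier-series expansion of the localized symbol pieces, the maximal-times-square-function bound in the Banach range, bilinear Calder\'on--Zygmund kernel estimates to extrapolate to quasi-Banach targets, and atomic arguments for $p\le 1$ or $q\le 1$.

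Two small points worth tightening. First, the Littlewood--Paley inequality you use on the left-hand side controls the $H^{r}$ quasi-norm (or $BMO$ at $r=\infty$), not $L^{r}$, when $r\le 1$; this is harmless here because you only invoke it down to $r=1$, where $H^{1}\hookrightarrow L^{1}$, and you route $r<1$ through the Calder\'on--Zygmund machinery instead. Second, the H\"older/maximal/square-function estimate as written needs $1<q<\infty$ (the square-function bound fails for $q=\infty$), so the range ``$1<p,q<\infty$ with $1/p+1/q\le1$'' should exclude $q=\infty$ at that stage; the $q=\infty$ cases (not just the corner $p=q=\infty$) must be absorbed into the Carleson-measure/paraproduct argument or the atomic step, which you essentially acknowledge when you flag the $H^{p}\times L^{\infty}$ endpoint as the hard part. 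With those caveats, the outline is correct and faithful to the references.
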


We will use the following two propositions,
whose proofs can be found in \cite[Section 6]{KMT-wave}.

\begin{prop}[{\cite[Proposition 2.3]{KMT-wave}}] \label{bdd_flagparapro1}
Let $m_1, m_2 \le 0$, $m=m_1+m_2$, 
$a_0 \in \dotS^m_{1, 0}(\R^{2n})$, 
$a_1 \in \dotS^{-m_1}_{1, 0}(\R^n)$,
$a_2 \in \dotS^{-m_2}_{1, 0}(\R^n)$, 
and
$\sigma(\xi, \eta) = a_0(\xi, \eta) a_1(\xi) a_2(\eta)$.
Then the bilinear Fourier multiplier operator 
$T_{\sigma}$ is bounded in 
\begin{align*}
\begin{cases}
H^{p} \times H^{q} \to L^{r},
&
0 < p, q < \infty, 
\,\,\,\,
1/r = 1/p + 1/q,
\\
BMO \times H^{q} \to L^{q},
&
0 < q < \infty,
\,\,\,\,
\text{if} 
\,\,\,\,
m_1 < 0,
\\
H^{p} \times BMO \to L^{p},
&
0 < p < \infty,
\,\,\,\,
\text{if} 
\,\,\,\,
m_2 < 0,
\\
BMO \times BMO \to BMO
&
\text{if}
\,\,\,\,
m_1, m_2 < 0.
\end{cases}
\end{align*}
\end{prop}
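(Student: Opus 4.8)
The plan is to take the Coifman--Meyer type result of Theorem \ref{Thm_S010} as the engine and to reduce $T_{\sigma}$ to it by splitting $\sigma$ according to the relative sizes of $|\xi|$, $|\eta|$ and $|\xi+\eta|$. Using the Littlewood--Paley functions of Notation \ref{notation}, I would first write $a_{1}=\sum_{j\ge 0}a_{1}\psi_{j}$ and $a_{2}=\sum_{k\ge 0}a_{2}\psi_{k}$; since $a_{1}\in\dotS^{-m_{1}}_{1,0}(\R^{n})$ with $-m_{1}\ge 0$, the piece $a_{1}\psi_{j}$ is supported in $\{|\xi|\sim 2^{j}\}$ (in $\{|\xi|\lesssim 1\}$ for $j=0$) and equals $2^{-jm_{1}}$ times a bump normalised at scale $2^{j}$, and likewise for $a_{2}\psi_{k}$. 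Then
\[
T_{\sigma}(f,g)=\sum_{j,k\ge 0}T_{a_{0}\,(a_{1}\psi_{j})\otimes(a_{2}\psi_{k})}(f,g),
\]
which I would split into the diagonal part $|j-k|\le C$, the high--low part $j>k+C$, and the low--high part $k>j+C$, for a large fixed constant $C$.

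On the diagonal part the support of the $(j,k)$-th summand forces $|\xi|\sim|\eta|\sim 2^{j}$, and since $m=m_{1}+m_{2}$ the factor $2^{-jm_{1}-km_{2}}$ is cancelled by the size $\lesssim(|\xi|+|\eta|)^{m}$ of $a_{0}$; thus $a_{0}\,(a_{1}\psi_{j})\otimes(a_{2}\psi_{k})$ is, uniformly in $(j,k)$, a bump normalised at scale $2^{j}$ in the sense of $\dotS^{0}_{1,0}(\R^{2n})$, and as the supports overlap boundedly the diagonal sum lies in $\dotS^{0}_{1,0}(\R^{2n})$. Theorem \ref{Thm_S010} then gives its $H^{p}\times H^{q}\to L^{r}$ boundedness, and the $BMO$-input statements for this piece follow from the usual Calder\'on--Zygmund/paraproduct endpoint estimates for such symbols.

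The off-diagonal parts are genuine flag (iterated) paraproducts and cannot be handed to Theorem \ref{Thm_S010} directly. In the high--low part, where $|\eta|\lesssim|\xi|\sim|\xi+\eta|$, the symbol has the form $a_{2}(\eta)\,\rho(\xi,\eta)$ with $\rho\in\dotS^{m_{2}}_{1,0}(\R^{2n})$ supported in $\{\,|\eta|\le 2^{-C}(|\xi|+|\eta|)\,\}$, and since $a_{2}$ need not be well behaved near the origin this is not a Mihlin symbol when $m_{2}<0$. I would instead group the Littlewood--Paley blocks by the gap $\ell=j-k$ between the high frequency $2^{j}$ and the low frequency $2^{k}$: each block carries the scalar $2^{jm}2^{-jm_{1}}2^{-km_{2}}=2^{\ell m_{2}}$, and for fixed $\ell$ the sum over $j$ is a shifted paraproduct of the familiar type $\sum_{j}T_{b_{j}}(\Delta_{j}f,\Delta_{j-\ell}g)$. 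Such a shifted paraproduct is bounded uniformly in $\ell$ (up to polynomial growth) by combining the square-function characterisation of the target space with the pointwise bound $|T_{b_{j}}(\Delta_{j}f,\Delta_{j-\ell}g)|\lesssim M(\Delta_{j}f)\cdot\sup_{k}M(\Delta_{k}g)$ when $g$ lies in a Hardy space, and with the Carleson-measure estimate for $\sum_{k}|\psi_{k}(D)g|^{2}$ when $g\in BMO$; then $\sum_{\ell\ge C}2^{\ell m_{2}}(\cdots)$ converges when $m_{2}<0$, while for $m_{2}=0$ one re-sums over $\ell$ first, writing the high--low operator as $\sum_{j}T_{b_{j}}(\Delta_{j}f,(a_{2}\varphi_{j-C})(D)g)$ and using the (uniform in $j$) linear $\dotS^{0}_{1,0}(\R^{n})$-boundedness of $(a_{2}\varphi_{j-C})(D)$ together with the same square-function estimate. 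In both regimes the genuine decay present when $m_{2}<0$ is precisely what upgrades the second input from a Hardy space to $BMO$. The low--high part is treated symmetrically, with $m_{1},-m_{1}$ in place of $m_{2},-m_{2}$ and the two inputs interchanged, and $BMO\times BMO\to BMO$ then follows by combining the two one-sided estimates. Adding the diagonal and the two off-diagonal parts over the four cases proves the proposition.

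The step I expect to be the main obstacle is exactly the analysis of these off-diagonal flag-paraproduct pieces: they escape the class $\dotS^{0}_{1,0}(\R^{2n})$, so Theorem \ref{Thm_S010} does not apply to them, and one has to carry out the $H^{p}$- and $BMO$-paraproduct estimates directly while keeping every constant controlled uniformly --- or at worst polynomially --- in the dyadic gap $\ell$, so that the series in $\ell$ converges; moreover, in the range $p,q,r<1$ this has to be done at the level of atoms, since the vector-valued maximal inequality is unavailable there. This is where the hypotheses $m_{1}\le 0$, $m_{2}\le 0$ enter, and where the strict inequalities $m_{1}<0$ (resp.\ $m_{2}<0$) are needed to produce the geometric decay that powers the $BMO\times H^{q}\to L^{q}$ (resp.\ $H^{p}\times BMO\to L^{p}$) endpoint.
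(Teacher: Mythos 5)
The paper does not reprove this proposition; it cites \cite[Section 6]{KMT-wave} and the argument there, so there is no in-paper proof to compare against line by line. Judged on its own merits, your decomposition into a diagonal part ($|j-k|\le C$) and two off-diagonal ``flag paraproduct'' parts is a reasonable and natural strategy, and the bookkeeping of the size $2^{jm-jm_{1}-km_{2}}=2^{\ell m_{2}}$ (resp.\ $2^{\ell m_{1}}$) in the off-diagonal blocks, together with the observation that strict inequalities $m_{1}<0$, $m_{2}<0$ are exactly what produce geometric decay in the gap $\ell$, captures the right mechanism for the $BMO$-input endpoints on those parts.

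There is, however, a genuine gap in the treatment of the \emph{diagonal} part for the $BMO$-input cases. You correctly observe that, after the cancellations on the diagonal, $\sum_{|j-k|\le C}a_{0}\,(a_{1}\psi_{j})\otimes(a_{2}\psi_{k})\in\dot S^{0}_{1,0}(\R^{2n})$, and you then assert that ``the $BMO$-input statements for this piece follow from the usual Calder\'on--Zygmund/paraproduct endpoint estimates for such symbols.'' That is not true: membership in $\dot S^{0}_{1,0}(\R^{2n})$ alone only gives $H^{p}\times H^{q}\to X_{r}$ for $0<p,q\le\infty$ (Theorem~\ref{Thm_S010}), with $H^{\infty}=L^{\infty}$, \emph{not} $BMO$. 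A generic symbol in $\dot S^{0}_{1,0}$ does \emph{not} map $BMO\times H^{q}\to L^{q}$ --- the constant symbol $\tau\equiv 1$ already fails (the pointwise product of a $BMO$ function and an $H^{1}$ function need not lie in $L^{1}$). What saves the diagonal part when $m_{1}<0$ is that $a_{1}\in\dot S^{-m_{1}}_{1,0}(\R^{n})$ with $-m_{1}>0$ forces $a_{1}(\xi)\to 0$ as $\xi\to 0$, hence $\sigma(0,\eta)=0$; this vanishing persists in the diagonal piece (the $j=0$ block carries the factor $a_{1}\varphi$) and is precisely the cancellation that makes $T_{\sigma}$ well defined on $BMO$ modulo constants and yields the endpoint bound. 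But by discarding the factored structure and keeping only ``$\dot S^{0}_{1,0}$'', your reduction loses this information, so the appeal to Theorem~\ref{Thm_S010} (or to generic CZ endpoint theory) is not sufficient for cases two, three, and four of the proposition. To repair it, you need to carry the quantitative smallness of $a_{1}$ near $\xi=0$ (resp.\ of $a_{2}$ near $\eta=0$) into the estimate for the diagonal piece as well, rather than only exploiting it in the off-diagonal decay in $\ell$. A secondary caveat: for $q\le 1$ the shifted-paraproduct bounds ``uniform up to polynomial growth in $\ell$'' you invoke are not off-the-shelf facts and would themselves need an atomic-level proof, as you acknowledge, but that is a matter of detail rather than a missing idea.
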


\begin{prop}[{\cite[Proposition 2.4]{KMT-wave}}] \label{bdd_flagparapro2}
Let $m_1 \le  0$, 
$a_0 \in \dotS^{m_1}_{1, 0}(\R^{2n})$, 
$a_1 \in \dotS^{-m_1}_{1, 0}(\R^n)$, 
and let
$\sigma(\xi, \eta) = a_0(\xi, \eta) a_1(\xi)$.
Then the bilinear Fourier multiplier operator $T_{\sigma}$
is bounded in
\begin{align*}
\begin{cases}
H^{p} \times L^\infty \to L^{p},
&
0 < p < \infty,
\\
BMO \times L^\infty \to BMO
&
\text{if}
\,\,\,\,
m_1 < 0.
\end{cases}
\end{align*}
\end{prop}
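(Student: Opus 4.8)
The plan is to exploit that $a_1$ depends only on the first frequency — whence $T_\sigma(f,g)=T_{a_0}\bigl(a_1(D)f,\,g\bigr)$, an identity not directly usable because $a_1(D)$, of order $-m_1\ge0$, does not preserve $H^p$ — and to decompose $T_\sigma$ according to which of $|\xi|$, $|\eta|$ dominates, so that the growth of $a_1$ gets absorbed, scale by scale, by the decay of $a_0$. Using the Littlewood--Paley pieces $\psi_j$ of Notation \ref{notation} ($\psi_0=\varphi$), I would write $1=\sum_{k\ge0}\psi_k(\xi)=\sum_{l\ge0}\psi_l(\eta)$ and split $T_\sigma=T^{\mathrm{hl}}+T^{\mathrm{hh}}+T^{\mathrm{lh}}$, where $T^{\mathrm{hl}}$ gathers the blocks $T_{\sigma\psi_k(\xi)\psi_l(\eta)}$ with $k\ge l+N_0$ (so the output frequency is $\sim|\xi|$), $T^{\mathrm{lh}}$ those with $l\ge k+N_0$, and $T^{\mathrm{hh}}$ those with $|k-l|<N_0$, $N_0$ a fixed large integer. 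From $|\partial_\xi^\alpha\partial_\eta^\beta a_0|\lesssim(|\xi|+|\eta|)^{m_1-|\alpha|-|\beta|}$ and $|\partial_\xi^\alpha a_1|\lesssim|\xi|^{-m_1-|\alpha|}$, and using $m_1\le0$ (which in particular keeps $a_0a_1$ bounded near the origin, where $a_0$ may be singular), one gets on each block
\[
\sigma(\xi,\eta)\,\psi_k(\xi)\,\psi_l(\eta)=2^{\,m_1(\max(k,l)-k)}\,\tau_{k,l}(\xi,\eta),
\]
with $\tau_{k,l}$ a smooth function adapted, uniformly in $(k,l)$, to $\{|\xi|\sim2^k,\ |\eta|\sim2^l\}$, and the scalar $2^{\,m_1(\max(k,l)-k)}$ equal to $1$ on the $T^{\mathrm{hl}}$-blocks, $O(1)$ on the $T^{\mathrm{hh}}$-blocks, and $2^{\,m_1(l-k)}\le1$ on the $T^{\mathrm{lh}}$-blocks, with $\sum_{l>k}2^{\,m_1(l-k)}<\infty$ precisely when $m_1<0$.

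For the first assertion ($0<p<\infty$): since $|\xi|+|\eta|\sim|\xi|$ on $\supp T^{\mathrm{hl}}$ and $\sim|\xi|\sim|\eta|$ on $\supp T^{\mathrm{hh}}$, a Leibniz computation shows their full symbols lie in $\dotS^0_{1,0}(\R^{2n})$ with seminorms controlled by those of $a_0,a_1$, so Theorem \ref{Thm_S010} gives boundedness $H^p\times H^q\to X_r$ for all admissible pairs; taking $q=\infty$, $r=p$ yields $H^p\times L^\infty\to L^p$. The symbol of $T^{\mathrm{lh}}$ need not lie in $\dotS^0_{1,0}(\R^{2n})$ — the $\xi$-derivatives of $a_1$ on $|\xi|\ll|\eta|$ spoil it — but summing in $k$ makes $T^{\mathrm{lh}}$ a flag paraproduct in which $f$ contributes only frequencies $\lesssim2^l$, weighted by $2^{m_1(l-k)}\le1$, paired with $\psi_l(D)g$; its summands being frequency-localized to $|\zeta|\sim2^l$, I would conclude $H^p\times L^\infty\to L^p$ ($0<p<\infty$) by a Littlewood--Paley square-function estimate, using $\|\psi_l(D)g\|_{L^\infty}\lesssim\|g\|_{L^\infty}$, the Fefferman--Stein inequality, and $\|(\sum_l|\psi_l(D)f|^2)^{1/2}\|_{L^p}\approx\|f\|_{H^p}$ (with the $h^1$/$H^p$-atomic substitutes for $p\le1$, cf.\ Notation \ref{notation}). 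Only $m_1\le0$ is used here.

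For the second assertion ($m_1<0$): fix $g\in\calS\cap L^\infty$ and view $L_g\colon f\mapsto T_\sigma(f,g)$ as a linear operator, with kernel $K_g(x,y)=\int\sigma^{\vee}(x-y,\,x-z)\,g(z)\,dz$ ($\sigma^{\vee}$ the inverse Fourier transform of $\sigma$ on $\R^{2n}$). By the first assertion with $p=2$, $L_g$ is $L^2$-bounded with norm $\lesssim\|g\|_{L^\infty}$. One then checks that $K_g$ satisfies the Calder\'on--Zygmund size and smoothness estimates with constant $\lesssim\|g\|_{L^\infty}$ — for the $\dotS^0_{1,0}$-part $T^{\mathrm{hl}}+T^{\mathrm{hh}}$ this is standard, and for $T^{\mathrm{lh}}$ it is exactly here that $m_1<0$ enters, since the scale-$(k,l)$ kernel pieces carry the weight $2^{m_1(l-k)}$ whose sum over $l>k$ converges iff $m_1<0$, yielding $|K_g(x,y)|\lesssim\|g\|_{L^\infty}|x-y|^{-n}$ and a H\"older bound of exponent $\min(|m_1|,1)$. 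Since $|a_1(\xi)|\lesssim|\xi|^{-m_1}\to0$ as $\xi\to0$, the symbol $\sigma$ vanishes on $\{\xi=0\}$, so $L_g\mathbf 1=T_\sigma(\mathbf1,g)=[\sigma(0,\cdot)](D)g=0$. As an $L^2$-bounded Calder\'on--Zygmund operator annihilating the constant function is bounded on $BMO$, this gives $\|T_\sigma(f,g)\|_{BMO}\lesssim\|g\|_{L^\infty}\|f\|_{BMO}$.

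The hard part is the $BMO$ endpoint, where $m_1<0$ is needed twice over: to turn the kernel of the flag piece $T^{\mathrm{lh}}$ into a Calder\'on--Zygmund kernel (the weights $2^{m_1(l-k)}$ must be summable) and to force $L_g\mathbf1=0$ (so $L_g$ preserves $BMO$); when $m_1=0$ both fail, and indeed $T^{\mathrm{lh}}$ is then \emph{not} bounded $BMO\times L^\infty\to BMO$. Also, Theorem \ref{Thm_S010} alone cannot reach this endpoint, its top case being only $L^\infty\times L^\infty\to BMO$. Everything else is routine Littlewood--Paley bookkeeping once the normalization of the first paragraph is in place.
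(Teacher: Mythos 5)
The paper does not actually prove this proposition: it is imported verbatim from \cite[Proposition 2.4]{KMT-wave}, with the proof deferred to Section 6 of that paper, so there is no in-text argument to compare yours against. Judged on its own terms, your architecture is reasonable: the high--low/high--high/low--high splitting, the normalization $\sigma\psi_k\psi_l=2^{m_1(\max(k,l)-k)}\tau_{k,l}$, the observation that the high--low and high--high parts fall under Theorem \ref{Thm_S010}, and the Calder\'on--Zygmund-plus-$L_g\mathbf{1}=0$ scheme for the $BMO\times L^\infty\to BMO$ endpoint (where $m_1<0$ is correctly identified as the source of both the summability of the kernel weights and the vanishing of $\sigma$ on $\{\xi=0\}$) are all sound.

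There is, however, a genuine gap in the first assertion at the admissible endpoint $m_1=0$; your closing claim that ``only $m_1\le 0$ is used here'' is not accurate, because your treatment of $T^{\mathrm{lh}}$ uses $m_1<0$. You propose to bound the low--high piece by the square function $\|(\sum_l|F_l|^2)^{1/2}\|_{L^p}$ and then estimate each $F_l$ termwise via $\|\psi_l(D)g\|_{L^\infty}\lesssim\|g\|_{L^\infty}$ and Fefferman--Stein. When $m_1<0$ the weights $2^{m_1(l-k)}$ are summable over $k\le l$, so Young's inequality in $\ell^2_l$ reduces everything to $\|(\sum_k|\psi_k(D)f|^2)^{1/2}\|_{L^p}$ and the argument closes. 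When $m_1=0$ the weights are identically $1$, the inner sum over $k\le l$ produces a variant of the partial sum $S_lf$, and once $|\psi_l(D)g|$ has been replaced by the constant $\|g\|_{L^\infty}$ the resulting square function $(\sum_l|S_lf|^2)^{1/2}$ diverges for every nonzero $f$. Closing this case requires genuinely using the cancellation in $\{\psi_l(D)g\}_l$, i.e.\ the Carleson-measure property of $|\psi_l(D)g|^2\,dx$ for $g\in L^\infty\subset BMO$ (the classical paraproduct estimate), not a termwise sup bound. Alternatively --- and more simply --- when $m_1=0$ the factorization $T_\sigma(f,g)=T_{a_0}(a_1(D)f,g)$, which you dismiss at the outset, does work: $a_1\in\dotS^{0}_{1,0}(\R^n)$ is then an order-zero Calder\'on--Zygmund multiplier, hence bounded on $H^p$ for every $0<p<\infty$, and Theorem \ref{Thm_S010} finishes the proof. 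Your parenthetical ``$a_1(D)$, of order $-m_1\ge 0$, does not preserve $H^p$'' is wrong precisely in the one case where your substitute argument breaks down.
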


\begin{proof}[Proof of Theorem \ref{thm_BRRS}]

Let $0 < s < 1$ or $s > 1$, and let $\sigma \in S^{m}_{1, 0}(\R^{2n})$,
$m = -ns(|1/p-1/2| + |1/q-1/2|)$.
We write $m_1 = -ns|1/p -1/2|$ and $m_2 = -ns|1/q -1/2|$.
Using the functions $\zeta$ and $\varphi$ given in Notation \ref{notation},
we decompose the bilinear multiplier $\tau$
defined by 
$\tau(\xi, \eta) = e^{i|\xi|^s} e^{i|\eta|^s} \sigma(\xi, \eta)$
as
\begin{align*}
&
\tau (\xi, \eta)
=
\tau_{1}(\xi, \eta)
+
\tau_{2}(\xi, \eta)
+
\tau_{3}(\xi, \eta)
+
\tau_{4}(\xi, \eta),
\\
&
\tau_{1}(\xi, \eta)
=
e^{i|\xi|^s} \varphi(\xi)
e^{i|\eta|^s} \varphi(\eta)
\sigma (\xi, \eta),
\,\,\,\,\,\,
\tau_{2}(\xi, \eta)
=
e^{i|\xi|^s} \zeta(\xi)
e^{i|\eta|^s} \varphi(\eta)
\sigma (\xi, \eta),
\\
&
\tau_{3}(\xi, \eta)
=
e^{i|\xi|^s} \varphi(\xi)
e^{i|\eta|^s} \zeta(\eta)
\sigma(\xi, \eta),
\,\,\,\,\,\,
\tau_{4}(\xi, \eta)
=
e^{i|\xi|^s} \zeta(\xi)
e^{i|\eta|^s} \zeta(\eta)
\sigma(\xi, \eta).
\end{align*}
We show that each $T_{\tau_i}$, $i=1, 2, 3, 4$, is bounded
from $H^p \times H^q$ to $X_r$, 
$1 \le p, q \le \infty$, $1/r = 1/p + 1/q$.

We begin with the estimate of $T_{\tau_1}$.
Since $(e^{i|\xi|^s} \varphi(\xi))^{\vee} \in L^1(\R^n)$
(see \eqref{kernels} and  \eqref{Linq}), 
the Fourier multiplier operator $e^{i|D|^s} \varphi(D)$ 
is bounded on $H^p$, $1 \le p \le \infty$.
Since $m \le 0$, we have 
$\sigma \in S^{m}_{1, 0}(\R^{2n}) 
\subset S^{0}_{1, 0}(\R^{2n}) 
\subset \dotS^{0}_{1, 0}(\R^{2n})$,
and hence by Theorem \ref{Thm_S010}, 
$T_{\sigma}$ is bounded from $H^{p} \times H^{q}$ to $X_r$.
Thus, the desired boundedness of $T_{\tau_1}$ is given.

Next, we consider the estimate of $T_{\tau_2}$.
We write
\begin{align*}
\tau_2(\xi, \eta)
=
e^{i|\xi|^s}
\zeta(\xi)
|\xi|^{m_1}
\times
e^{i|\eta|^s}
\varphi(\eta)
\times
|\xi|^{-m_1}
\sigma (\xi, \eta).
\end{align*}
By Theorem A, the Fourier multiplier operator
$e^{i|D|^s} \zeta(D) |D|^{m_1}$ 
is bounded on $H^p$ if $1 \le p < \infty$,
and on $BMO$ if $p= \infty$. 
As we showed above,
$e^{i|D|^s} \varphi(D)$ 
is bounded on 
$H^q$, $1 \le q \le \infty$.
On the other hand,
since $|\xi|^{-m_1} \in \dotS^{-m_1}_{1, 0}(\R^{n})$
and $\sigma \in S^{m}_{1, 0}(\R^{2n}) \subset \dotS^{m_1}_{1, 0}(\R^{2n})$, 
from Propositions \ref{bdd_flagparapro1} and \ref{bdd_flagparapro2},
it follows that 
the bilinear Fourier multiplier operator
corresponding to 
$|\xi|^{-m_1} \sigma (\xi, \eta)$
is bounded in
$H^p \times H^q \to X_r$,
with $H^p$ replaced by $BMO$
if $p = \infty$
(notice that $m_1 < 0$ if $p = \infty$).
Thus, combining these boundedness, 
we obtain  the $H^p \times H^q \to X_r$
boundedness of $T_{\tau_2}$.

In the same way as above, we see that 
$T_{\tau_3}$ is bounded from $H^p \times H^q$ to $X_r$.

We finally prove that $T_{\tau_4}$ is bounded from 
$H^p \times H^q$ to $X_r$.
The multiplier $\tau_4$ can be written as 
\begin{align*}
\tau_4(\xi, \eta)
=
e^{i|\xi|^s}
\zeta(\xi)
| \xi |^{m_1}
\times
e^{i|\eta|^s}
\zeta(\eta)
| \eta |^{m_2}
\times
| \xi |^{-m_1}
| \eta |^{-m_2}
\sigma (\xi, \eta).
\end{align*}
Since 
$\sigma \in S^{m}_{1, 0}(\R^{2n}) \subset \dotS^{m}_{1, 0}(\R^{2n})$,
$|\xi|^{-m_1} \in \dotS^{-m_1}_{1, 0}$
and
$|\eta|^{-m_2} \in \dotS^{-m_2}_{1, 0}$,
it follows from
Proposition \ref{bdd_flagparapro1} 
that the bilinear Fourier multiplier 
$| \xi |^{-m_1} | \eta |^{-m_2} \sigma (\xi, \eta)$
gives rise to a bounded operator in 
$H^{p} \times H^{q} \to X_{r}$
with $H^p$ or $H^q$ replaced by $BMO$
if $p = \infty$ or $q = \infty$, respectively.
Here, we notice that $m_1< 0$
if $p = \infty$,
and $m_2 < 0$ if $q = \infty$, respectively.
Hence, combining this with Theorem A, 
we obtain 
the $H^{p} \times H^{q} \to X_{r}$ boundedness of $T_{\tau_4}$.
This completes the proof of Theorem \ref{thm_BRRS}.
\end{proof}

\section{Fourier transform of $e^{ i |\xi|^s } \psi (2^{-j}\xi)$}
\label{FourierTrans}

In this section, we investigate the asymptotic behavior of 
the Fourier transform of the oscillator $e^{i|\xi|^s}$
multiplied by Littlewood-Paley's dyadic decompositions.
This property is one of the keys to proving our main theorem.

\begin{prop} \label{Kj_psi}
Let $0< s < 1$ or $1 < s < \infty$.
Suppose that $\psi \in \calS(\R^n)$ satisfies
$\supp \psi \subset \{ 1/2 \le |\xi| \le 2 \}$.
Then, for any $N_{1}, N_{2}, N_{3} \ge 0$,
there exist $c=c(n,s, N_1, N_2, N_3)>0$ 
and $M=M(n, s, N_1, N_2, N_3) \in \N$
such that
\begin{equation}\label{Kjx0}
\Big| \Big( 
e^{ i |\xi|^{s} } 
\psi (2^{-j} \xi ) 
\Big)^{\vee} (x)
\Big|
\le c\,
\| \psi \|_{ C^M }
\begin{cases}
{2^{-jN_1}} ,
	& \text{if}\;\; {2^{j (1-s)} |x| < a}, \\
{2^{j(n-\frac{ns}{2})}} ,
	& \text{if}\;\; {a\le 2^{j (1-s)} |x| \le b}, \\
{2^{-jN_2}|x|^{-N_3}} ,
	& \text{if}\;\; {2^{j (1-s)} |x| >b} ,
\end{cases}
\end{equation}
for $j \in \N_{0}$,
where
$a = s 4^{ -|1-s| }$
and 
$b = s 4^{ |1-s| }$.
If in addition $\psi (\xi)\neq 0$ for $2/3 \le |\xi| \le 3/2$, then 
there exist $c^{\prime}=c^{\prime}(n, s, \psi)>0$ and 
$j_0=j_0 (n,s, \psi)\in \N$ such that  
\begin{equation} \label{Kjxradial}
\begin{split}
&
\frac{1}{c^{\prime}}
2^{j(n - \frac{ns}{2})}\le 
\Big| \Big( 
e^{ i |\xi|^{s} } 
\psi (2^{-j} \xi ) 
\Big)^{\vee} (x)
\Big|
\le 
c^{\prime}
2^{j(n - \frac{ns}{2})}
\\
&
\text{if}
\quad
a^{\prime} \le 2^{j(1-s)}|x| \le b^{\prime} 
\;\; 
\text{and}
\;\;
j>j_0, 
\end{split}
\end{equation}
where
$a^{\prime} = s (3/2)^{ -|1-s| }$
and 
$b^{\prime} = s (3/2)^{ |1-s| }$.
\end{prop}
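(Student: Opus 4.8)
The plan is to compute the inverse Fourier transform
$K_j(x) = \bigl(e^{i|\xi|^s}\psi(2^{-j}\xi)\bigr)^{\vee}(x)$
by the method of stationary phase. After the substitution $\xi = 2^j\eta$, we get
\[
K_j(x) = \frac{2^{jn}}{(2\pi)^n}\int_{\R^n} e^{\,i\bigl(2^{j(1-s)}x\cdot\eta' + |\eta|^s\bigr)}\,\psi(\eta)\,d\eta,
\qquad \eta' = \eta/2^{j(1-s)}\text{ rescaled appropriately},
\]
so that, writing $\lambda = 2^{js}$ and $y = 2^{j(1-s)}x$, the integral becomes $2^{jn}\int e^{i\lambda\Phi_y(\eta)}\psi(\eta)\,d\eta$ with phase $\Phi_y(\eta) = \lambda^{-1}y\cdot\eta + |\eta|^s$... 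I would instead organize it more cleanly: set $\lambda = 2^{js}$ so that $|2^j\eta|^s = \lambda|\eta|^s$, and write $K_j(x) = \frac{2^{jn}}{(2\pi)^n}\int e^{i\lambda(|\eta|^s + \lambda^{-1}2^{jn\cdot 0}\dots)}$—the precise bookkeeping is routine. The essential point is that on $\supp\psi$ we have $|\eta|\sim 1$, the gradient of $|\eta|^s$ equals $s|\eta|^{s-2}\eta$, which has length in the interval $[s 4^{-|1-s|/2}\dots]$; carefully, $|\eta|\in[1/2,2]$ gives $|\nabla|\eta|^s| = s|\eta|^{s-1} \in [s\,2^{-|1-s|}, s\,2^{|1-s|}]$, and after the rescaling the stationary point of the full phase $2^{j(1-s)}x\cdot\eta/2^{js} + |\eta|^s$—i.e. $|\eta|^s + 2^{-j(1-s)\cdot(-1)}\dots$—exists precisely when $2^{j(1-s)}|x|/2^{js}\cdot 2^{js}$, giving the condition $2^{j(1-s)}|x|\in[a,b]$ with $a,b$ as stated (the constants $4^{\mp|1-s|}$ come from $(1/2)^{s-1}$ and $2^{s-1}$).

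The three cases of \eqref{Kjx0} are then the standard trichotomy: (i) when $2^{j(1-s)}|x| < a$ or $> b$ there is no stationary point and the phase gradient is bounded below by a fixed multiple of $\max(1, 2^{-js}\cdot 2^{j(1-s)}|x|)$, so repeated integration by parts (non-stationary phase) yields decay $2^{-jN_1}$ in the small-$|x|$ regime and $2^{-jN_2}|x|^{-N_3}$ in the large-$|x|$ regime, with the number $M$ of derivatives of $\psi$ needed growing linearly in the $N_i$; (ii) when $2^{j(1-s)}|x|\in[a,b]$ the phase has a nondegenerate critical point (the Hessian of $|\eta|^s$ is invertible on $|\eta|\sim1$ for $s\neq 0,1$, with determinant bounded above and below), so stationary phase gives the main term of size $\lambda^{-n/2} = 2^{-jns/2}$, and multiplying by the prefactor $2^{jn}$ produces $2^{j(n - ns/2)}$. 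I would state the stationary-phase estimates in the form: $|\int e^{i\lambda\Phi}\psi\,d\eta| \le C\|\psi\|_{C^M}\lambda^{-n/2}$ on the critical regime and $\le C\|\psi\|_{C^M}(\lambda|\nabla\Phi|_{\min})^{-M}$ off it, citing a standard reference or the elementary integration-by-parts lemma; the uniformity of the constants in $y$ over the compact range $[a,b]$ is what needs a word of care.

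For the lower bound \eqref{Kjxradial}, the plan is to extract the leading term of the stationary-phase asymptotic expansion explicitly. When $a'\le 2^{j(1-s)}|x|\le b'$ the critical point $\eta_\ast$ satisfies $2/3\le|\eta_\ast|\le 3/2$ (these are the radii $(1/2)^{\dots}$ replaced by $(2/3)^{\dots}$... precisely $s|\eta_\ast|^{s-1}$ ranging over $[s(3/2)^{-|1-s|}, s(3/2)^{|1-s|}]$ forces $|\eta_\ast|\in[2/3,3/2]$), hence $\psi(\eta_\ast)\neq 0$ by hypothesis, with $|\psi(\eta_\ast)|$ bounded below by a positive constant depending only on $\psi$ by compactness. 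The stationary-phase expansion gives
\[
K_j(x) = \frac{2^{jn}}{(2\pi)^n}\Bigl( c_{n,s}\,2^{-jns/2}\,|{\det\Hess}|^{-1/2}e^{i(\dots)}\psi(\eta_\ast) + O(2^{-jns/2}\cdot 2^{-js}\|\psi\|_{C^M})\Bigr),
\]
so for $j > j_0$ with $j_0$ large enough the error term is dominated by half the main term, and taking absolute values yields both inequalities of \eqref{Kjxradial} with $c'$ depending on $n,s,\psi$. The main obstacle, and the place I would spend the most effort, is establishing the required uniformity in $x$ (equivalently in $y = 2^{j(1-s)}x$ over the compact intervals) of all constants in the stationary-phase estimates and in the remainder bound — i.e. checking that the lower bound on $|\det\Hess\Phi_y(\eta_\ast)|$, the lower bound on $|\psi(\eta_\ast)|$, and the $C^M$-type control of the amplitude are all uniform — together with tracking how $M$ depends on $N_1,N_2,N_3$; the oscillatory-integral estimates themselves are classical.
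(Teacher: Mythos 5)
Your proposal is essentially the same as the paper's proof: rescale $\xi = 2^j\eta$ to obtain a phase $2^{js}(2^{j(1-s)}x\cdot\eta + |\eta|^s)$, locate the stationary point $\eta_0$ via $s|\eta_0|^{s-1} = 2^{j(1-s)}|x|$, apply non-stationary phase (integration by parts) when $\eta_0$ falls outside the dilated support of $\psi$ to get the two decay regimes, apply stationary phase (nondegenerate Hessian, uniform bounds) in the middle regime to get $2^{j(n-ns/2)}$, and extract the leading coefficient together with an $O(2^{j(n-ns/2-s)})$ remainder to obtain the two-sided bound for $j$ large. The only small slip is in your aside on the origin of the constant $4^{\mp|1-s|}$: it corresponds to the stationary point satisfying $|\eta_0|\in[1/4,4]$ (a buffer beyond $\supp\psi\subset\{1/2\le|\eta|\le 2\}$), i.e.\ $s(1/4)^{s-1}$ and $s\,4^{s-1}$, not $(1/2)^{s-1}$ and $2^{s-1}$ as you wrote; this does not affect the structure of the argument.
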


To prove this proposition, we first observe that the 
determinant and the signature 
($=$({the number of positive eigenvalues})
$-$({the number of negative eigenvalues})) 
of the matrix 
\[
\Hess\, \big( |\xi|^{s}\big) = 
\big( \partial_{\xi_i} \partial_{\xi_j} |\xi|^{s} \big)_{1\le i,j \le n}
\]
are given by 
\begin{equation}\label{det}
\det \Hess\,  \big( |\xi|^{s}\big) 
=
s^n (s -1) |\xi|^{(s -2)n}
\end{equation}
and 
\begin{equation}\label{sign}
\sign \Hess\, \big( |\xi|^{s}\big) 
=
\begin{cases}
{n-2} & \text {if}\;\;0<s <1, \\
{n} & \text {if}\;\;s >1. \\
\end{cases}
\end{equation}
In fact this is a simple computation. 
We have 
\begin{equation*}
\partial_{\xi_i} \partial_{\xi_j} |\xi|^{s} 
=s |\xi|^{s-2} \delta_{i,j} 
+ s (s -2) |\xi|^{s -2} \frac{\xi_i}{|\xi|}\, \frac{\xi_j}{|\xi|}. 
\end{equation*}
Hence if we take an orthogonal matrix $T=(t_{i,j})$ that satisfies 
\[
\sum_{j=1}^{n} t_{i,j} \frac{\xi_j}{|\xi|}
=
\begin{cases}
{1} & \text {for}\;\;i=1, \\
{0} & \text {for}\;\;i=2, \dots , n, \\
\end{cases}
\]
then $T (\Hess ( |\xi|^{s}) ) T^{-1}$  
is equal to the diagonal matrix with the diagonal entries 
\[
s(s-1) |\xi|^{s-2},\; 
s |\xi|^{s-2}, \; \dots ,  \; s |\xi|^{s-2}. 
\]
From this we obtain \eqref{det} and \eqref{sign}.

\begin{proof}[Proof of Proposition \ref{Kj_psi}]
By a simple change of variables we can write 
\begin{equation*}
H_j (x) 
=\Big( 
e^{ i |\xi|^{s} } 
\psi (2^{-j} \xi ) 
\Big)^{\vee} (x)
=
\frac{2^{jn}}{(2\pi)^n} 
\int_{\R^n} 
e^{i 2^{js} \phi_j (x, \eta)} 
\psi (\eta)\, d\eta   
\end{equation*}
with  
\[
\phi_j (x,\eta) = 2^{j(1-s)} x\cdot \eta + |\eta|^s. 
\]

The gradient of the phase function $\phi_j (x,\eta)$ is 
given by 
\[
\grad_{\eta}\, \phi_j (x,\eta)
= 2^{j(1-s)} x + s |\eta|^{s-1} \frac{\eta}{|\eta|}. 
\]
For each $x \in \R^n \setminus \{0\}$, 
there exists a unique $\eta_0 = \eta_0 (x)\in \R^n \setminus \{0\}$
such that 
$\grad_{\eta} \, \phi_j (x,\eta)\big|_{\eta=\eta_0}=0$. 
In fact, $\eta_0$ is determined by the equations  
\begin{equation*}
2^{j(1-s)} |x| = s |\eta_0|^{s-1}, 
\quad 
-\frac{x}{|x|}=\frac{\eta_0}{|\eta_0|}. 
\end{equation*}
If $\eta_0$ is in a neighborhood of 
$\supp \psi $   
then we can use the stationary phase method to obtain 
the asymptotic behavior of $H_j(x)$. 
If $\eta_0$ is outside a neighborhood of 
$\supp \psi $ 
then 
we can deduce the rapid decay of $H_j (x)$ by integration by parts. 
To be precise, we divide the argument 
into several cases.

We first consider the case  $0<s<1$.

{Case $\RomI$:} $0<s<1$ and $2^{j(1-s)}|x|<s 4^{s-1}$. 
Then $|\eta_0|>4$.
In this case, for   
$\eta \in \supp \psi \subset \{{1}/{2}\le |\eta|\le 2\}$, 
we have 
\begin{align*}
&\big| 
\grad_{\eta}  \phi_j (x, \eta) \big|
=
\left| 
2^{j(1-s)} x  + s |\eta|^{s-1} \frac{\eta}{|\eta|} 
\right|
\ge 
- 2^{j(1-s)} |x| + s 2 ^{s-1}
> s (2^{s-1} - 4^{s-1})  
\end{align*}
and 
\begin{equation}\label{phiCN}
\big| \partial_{\eta}^{\alpha} 
\phi_j (x, \eta) \big|
= 
\big| \partial_{\eta}^{\alpha} 
|\eta|^{s} \big|
\le c(n, s, \alpha) 
\quad \text{for} \quad 
|\alpha|\ge 2. 
\end{equation}
Thus integration by parts gives 
\begin{equation*}
\big| H_j (x) \big| 
\le 
c(n,s,N)
\|\psi\|_{C^N}
2^{jn} (2^{js})^{-N}
\end{equation*}
for each $N\in \N$. 
Since 
$N$ can be taken arbitrarily large, 
the desired estimate 
of $H_j (x)$ in this case follows.

{Case $\II$:} 
$0<s<1$ and $2^{j(1-s)}|x|> s 4^{-(s-1)}$. 
In this case, 
$|\eta_0|< {1}/{4}$, 
and, for 
$\eta \in \supp \psi \subset \{{1}/{2}\le |\eta|\le 2\}$, 
we have 
\begin{align*}
&\big| 
\grad_{\eta}  \phi_j (x, \eta) \big|
=
\left| 
2^{j(1-s)} x  + s |\eta|^{s-1} \frac{\eta}{|\eta|} 
\right|
\ge 
2^{j(1-s)} |x| - s 2^{-(s-1)}
> 2^{j(1-s)} |x| (1 - 2^{s-1})  
\end{align*}
and we also have \eqref{phiCN}. 
Thus integration by parts gives 
\begin{equation*}
\big| H_j (x) \big| 
\le 
c(n,s,N)
\|\psi\|_{C^N}
2^{jn} (2^{j} |x|)^{-N}
\end{equation*}
for each $N\in \N$. 
Since 
$N$ can be taken arbitrarily large, 
the desired estimate 
of $H_j (x)$ in this case follows.

{Case $\III$:} 
$0<s<1$ and 
$s4^{s-1}\le 2^{j(1-s)}|x|\le s4^{-(s-1)}$. 
In this case, ${1}/{4}\le |\eta_0|\le 4$. 
By \eqref{det} and \eqref{sign}, 
we have
\begin{equation}\label{detHess}
\det\, \Hess_{\eta} \big( \phi_j (x,\eta)\big) 
=
s^n (s -1) |\eta|^{(s -2)n}<0
\end{equation}
and 
\begin{equation*}
\sign \Hess_{\eta} \big( \phi_j (x,\eta)\big) 
=
{n-2}.  
\end{equation*}
Also for each multi-index $\alpha$ there exists  
$c(n, s, \alpha)$ such that 
\begin{equation}\label{phijCalpha}
\big| \partial_{\eta}^{\alpha} \phi_j (x,\eta) 
\big| 
= 
\big| \partial_{\eta}^{\alpha} \big( 2^{j(1-s)} 
x \cdot \eta  + |\eta|^s\big)  
\big| 
\le c(n,s,\alpha) 
\quad 
\text{for}
\quad 
\frac{1}{10}< |\eta| < 10. 
\end{equation}
Notice that the constant  $c(n,s,\alpha)$ can be 
taken independent of $j$ and $x$ so long as they are in 
the range of Case $\III$. 
Thus by using 
the stationary phase method 
(see, for example, \cite[Chapter VIII, Section 2.3]{Steinbook}), 
we obtain 
\begin{equation}\label{asymptotic}
\begin{split}
H_j (x) 
=&
(2\pi)^{-\frac{n}{2}}
\exp 
\big(i 
|x|^{\frac{s}{s-1}} 
s^{\frac{-s}{s-1}} 
(1-s) 
\big) 
\big( s^n (1-s) |\eta_0|^{(s-2)n}
\big)^{-\frac{1}{2}}
e^{\frac{\pi i }{4}(n-2)} 
\\
&
\times \psi (\eta_0) 
2^{j(n-\frac{ns}{2})} 
+ O \big( 2^{j(n-\frac{ns}{2}-s)}  \big). 
\end{split}
\end{equation}
Here notice that the oscillating factor $\exp (\cdots)$ comes from 
\[
2^{js} \phi_j (x, \eta_0)
=
|x|^{\frac{s}{s-1}} 
s^{\frac{-s}{s-1}} 
(1-s). 
\]
Also notice that, by virtue of \eqref{detHess} 
and \eqref{phijCalpha}, 
the $O$-estimate in \eqref{asymptotic} holds uniformly 
for $(j,x)$ in the range of Case $\III$ and for 
$\psi$ satisfying 
$\supp \psi \subset\{1/2 \le |\xi| \le 2\}$ and 
$\|\psi\|_{C^M}\le 1$ with a sufficiently large $M=M(n)$. 
From \eqref{asymptotic}
the estimate 
of $H_j (x)$ in \eqref{Kjx0} for Case $\III$ follows.

The estimate \eqref{Kjxradial} also follows from 
\eqref{asymptotic} 
since $2/3 \le |\eta_0|\le 3/2$ if 
$
s (3/2)^{s-1 } 
\le 2^{j(1-s)}|x| 
\le 
s (2/3)^{s-1}
$.

Next we consider the case $s>1$. 
Since the argument needs only slight  modification of the case $0<s<1$, 
we shall only indicate necessary modifications. 

{Case $\RomI^{\prime}$:} $s>1$ and $2^{j(1-s)}|x|<s 4^{-(s-1)}$. 
In this case, 
$|\eta_0|<{1}/{4}$ and 
\begin{equation*}
\big| 
\grad_{\eta}  \phi_j (x, \eta) \big|
> s (2^{-(s-1)} - 4^{-(s-1)}). 
\end{equation*} 
for $\eta \in \supp \psi $. 
Integration by parts 
yields 
the desired estimate. 

{Case $\II^{\prime}$:} $s>1$ and $2^{j(1-s)}|x|> s 4^{s-1}$. 
In this case, 
$|\eta_0|> 4$ and 
\begin{equation*}
\big| 
\grad_{\eta}  \phi_j (x, \eta) \big|
> 
2^{j(1-s)} |x| (1 - 2^{-(s-1)})
\end{equation*}
for $\eta \in \supp \psi$.
Integration by parts yields 
the desired estimate.

{Case $\III^{\prime}$:} $s>1$ and $s 4 ^{-(s-1)} \le 
2^{j(1-s)}|x|\le s 4^{s-1}$. 
In this case, ${1}/{4}\le |\eta_0|\le 4$. 
By \eqref{det} and \eqref{sign}, 
we have
\begin{equation*}
\det\, \Hess_{\eta} \big( \phi_j (x,\eta)\big) 
=
s^n (s -1) |\eta|^{(s -2)n}> 0
\end{equation*}
and 
\begin{equation*}
\sign \Hess_{\eta} \big( \phi_j (x,\eta)\big) 
=n.  
\end{equation*}
The estimate \eqref{phijCalpha} also holds. 
By the stationary phase method, we obtain  
\begin{align*}
H_j (x) 
=&
(2\pi)^{-\frac{n}{2}}
\exp 
\big(i 
|x|^{\frac{s}{s-1}} 
s^{\frac{-s}{s-1}} 
(1-s) 
\big) 
\big( s^n (s-1) |\eta_0|^{(s-2)n}
\big)^{-\frac{1}{2}}
e^{\frac{\pi i }{4}n} 
\\
&
\times \psi (\eta_0) 
2^{j(n-\frac{ns}{2})} 
+ O \big( 2^{j(n-\frac{ns}{2}-s)}  \big), 
\end{align*}
from which the desired 
estimates follow. 
This completes the proof of Proposition \ref{Kj_psi}.  
\end{proof}

\begin{cor} \label{Kj_phi}
Suppose that $\theta \in \calS(\R^n)$ satisfies
$\supp \theta \subset \{ |\xi| \le 2 \}$
and the function $\zeta$ is as in Notation \ref{notation}.
Then the following hold.
\begin{enumerate}
\item \label{Kj_phi_s<1}
Let $0< s < 1$ and $N\ge 0$.
Then,
there exist $c>0$ and $M \in \N$
such that
\begin{equation*} 
\Big| \Big( 
e^{ i |\xi|^{s} } 
\zeta(\xi)
\theta (2^{-j} \xi ) 
\Big)^{\vee} (x)
\Big|
\le c\,
\| \theta \|_{ C^M }
\begin{cases}
|x|^{- \frac{n}{2} - \frac{n}{2(1-s)} } ,
	& \text{if}\;\; |x| \le 1, \\
|x|^{ -N } ,
	& \text{if}\;\; |x| > 1 ,
\end{cases}
\end{equation*}
for all $j \in \N_{0}$.

\item \label{Kj_phi_s>1}
Let $1 < s < \infty$and $N\ge 0$.
Then,
there exist $c>0$ and $M \in \N$
such that
\begin{equation*} 
\Big| \Big( 
e^{ i |\xi|^{s} } 
\zeta(\xi)
\theta (2^{-j} \xi ) 
\Big)^{\vee} (x)
\Big|
\le c\,
\| \theta \|_{ C^M }
\begin{cases}
\big( 1+|x| \big)^{- \frac{n}{2} + \frac{n}{2(s-1)} } ,
	& \text{if}\;\; |x| \le s 8^{s-1} \, 2^{j(s-1)} , \\
|x|^{ -N } ,
	& \text{if}\;\; |x| > s 8^{s-1} \, 2^{j(s-1)} ,
\end{cases}
\end{equation*}
for all $j \in \N_{0}$.
\end{enumerate}
\end{cor}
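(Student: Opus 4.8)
The plan is to deduce Corollary~\ref{Kj_phi} from Proposition~\ref{Kj_psi} by a Littlewood--Paley decomposition of the cutoff $\zeta(\xi)\theta(2^{-j}\xi)$. Since $\supp\theta\subset\{|\xi|\le2\}$ and $\zeta=\sum_{k\in\N}\psi_k$, only the terms with $k\le j+1$ survive, so that
\[
\zeta(\xi)\theta(2^{-j}\xi)=\sum_{k=1}^{j+1}\psi_k(\xi)\theta(2^{-j}\xi)=\sum_{k=1}^{j+1}\widetilde{\psi}_{j,k}(2^{-k}\xi),\qquad\widetilde{\psi}_{j,k}(\eta)=\psi(\eta)\,\theta(2^{k-j}\eta).
\]
Each $\widetilde{\psi}_{j,k}$ is supported in $\{1/2\le|\eta|\le2\}$, and since $k\le j+1$ forces the dilation factor $2^{k-j}$ to be at most $2$, the Leibniz rule gives $\|\widetilde{\psi}_{j,k}\|_{C^M}\le C_M\|\theta\|_{C^M}$ with $C_M$ independent of $j$ and $k$ (the fixed function $\psi$ being absorbed into $C_M$). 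Applying Proposition~\ref{Kj_psi} to each $\widetilde{\psi}_{j,k}$, with exponents $N_1,N_2,N_3\ge0$ still to be fixed, we obtain
\[
\Big|\Big(e^{i|\xi|^s}\zeta(\xi)\theta(2^{-j}\xi)\Big)^{\vee}(x)\Big|\le c\,\|\theta\|_{C^M}\sum_{k=1}^{j+1}E_k(x),
\]
where $E_k(x)$ equals $2^{-kN_1}$, $2^{k(n-ns/2)}$, or $2^{-kN_2}|x|^{-N_3}$ according to whether $2^{k(1-s)}|x|<a$, $a\le 2^{k(1-s)}|x|\le b$, or $2^{k(1-s)}|x|>b$ (this is \eqref{Kjx0} with $j$ replaced by $k$).

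For fixed $x$ I would then sort $k\in\{1,\dots,j+1\}$ by which of the three regimes it belongs to. Because $k\mapsto 2^{k(1-s)}$ is strictly monotone --- increasing when $0<s<1$, decreasing when $s>1$ --- the ``intermediate'' indices $k$ with $a\le 2^{k(1-s)}|x|\le b$ form an interval whose length is $\log_2(b/a)/|1-s|=4$, hence it contains at most $5$ integers; on it $2^k$ stays comparable to the single value $\rho$ fixed by $\rho^{\,1-s}|x|\sim 1$, i.e. $\rho\sim|x|^{-1/(1-s)}$ when $0<s<1$ and $\rho\sim|x|^{1/(s-1)}$ when $s>1$. The intermediate contribution is therefore $\lesssim 2^{k(n-ns/2)}=\rho^{\,n(2-s)/2}$, and a direct computation of the exponent gives $\rho^{\,n(2-s)/2}\sim|x|^{-n/2-n/(2(1-s))}$ when $0<s<1$ and $\rho^{\,n(2-s)/2}\sim|x|^{-n/2+n/(2(s-1))}$ when $s>1$ --- exactly the powers appearing in the corollary. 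The remaining indices are summed as geometric series with ratio bounded away from $1$: the ``small-phase'' indices contribute $\lesssim 2^{-k_0N_1}$ and the ``rapid-decay'' indices contribute $\lesssim 2^{-k_1N_2}|x|^{-N_3}$, where $k_0,k_1$ are the smallest indices in the respective ranges.

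It remains to match these three pieces against the two ranges of $|x|$ in the statement. In the far range --- $|x|>1$ for $0<s<1$, and $|x|>s\,8^{s-1}2^{j(s-1)}$ for $1<s<\infty$ --- the thresholds are calibrated (using $b=s\,4^{|1-s|}$; for $s>1$ the decisive index is the largest admissible one, $k=j+1$, for which $2^{(j+1)(1-s)}|x|=b$ precisely at $|x|=s\,8^{s-1}2^{j(s-1)}$) so that, once $|x|$ exceeds an absolute constant, every $k\in\{1,\dots,j+1\}$ lies in the rapid-decay regime; the sum is then $\lesssim|x|^{-N_3}$, and choosing $N_3=N$ gives the desired $|x|^{-N}$ (for $|x|$ below that constant the few exceptional indices contribute only $O(1)\lesssim|x|^{-N}$). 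In the near range, if some intermediate index lies in $\{1,\dots,j+1\}$ it produces the asserted power of $|x|$, equivalently of $1+|x|$ since on the relevant set $|x|\gtrsim1$, while the small-phase and rapid-decay tails contribute $O(1)$ and $O(|x|^{-N_3})$; these are dominated by the asserted power once $N_1,N_2,N_3$ are chosen large enough in terms of $n,s,N$ --- and when $|x|$ is bounded, all three contributions are $O(1)$, matching $(1+|x|)$ raised to the relevant (possibly negative) power, which is $\sim1$ there. The argument for $1<s<\infty$ is identical to that for $0<s<1$, with the roles of the small-phase and rapid-decay regimes interchanged owing to the opposite monotonicity of $2^{k(1-s)}$. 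The main point requiring care is purely combinatorial --- tracking which regime dominates the sum as $x$ varies, especially near the thresholds and near the top index $k=j+1$, where no intermediate index need be available so that the rapid-decay tail alone must reproduce the stated power --- since Proposition~\ref{Kj_psi} already supplies all the analytic content.
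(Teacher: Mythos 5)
Your proposal is correct and rests on the same decomposition $\zeta(\xi)\theta(2^{-j}\xi)=\sum_{k=1}^{j+1}\psi(2^{-k}\xi)\theta(2^{-j}\xi)$ and the same case-by-case application of Proposition~\ref{Kj_psi} that the paper uses. The only real difference is organizational: the paper multiplies $K_j$ by the target power $|x|^L$ and, writing $|x|^L=(2^{k(1-s)}|x|)^L\,2^{-kL(1-s)}$, reduces each of the three regime sums to a bounded geometric series, whereas you fix $x$, isolate the $O(1)$ intermediate indices where $2^k\sim|x|^{\mp 1/(1-s)}$ produces the stated power, and then control the small-phase and rapid-decay tails by geometric series from the boundary index. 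These are two bookkeeping styles for the identical computation, and both require the same choices of $N_1,N_2,N_3$ large in terms of $n,s,N$. One small spot worth being a little more explicit about is the near range $|x|\lesssim 1$ when $s>1$: there the rapid-decay regime is in fact empty for $k\ge1$ (since $2^{k(1-s)}|x|\le 2^{1-s}|x|\le b$ once $|x|\le s8^{s-1}$), which is what makes the ``$O(|x|^{-N_3})$'' tail harmless; you implicitly rely on this when you assert all contributions are $O(1)$ for bounded $|x|$, but stating it would remove any appearance of a blow-up as $|x|\to0$.
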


\begin{proof}
We first put
$K_{j} = 
( e^{ i |\xi|^{s} } 
\zeta(\xi)
\theta (2^{-j} \xi ) 
)^{\vee}$
and
decompose $K_{j}$
as
\begin{align} \label{decomposekernel}
K_{j} (x)
= 
\sum_{k=1}^{j+1} 
K_{k,j} (x)
\quad\textrm{with}\quad
K_{k,j} (x)
=
\Big( 
e^{ i |\xi|^{s} } \, 
\psi (2^{-k} \xi) \theta (2^{-j} \xi ) 
\Big)^{\vee} (x) .
\end{align}
Here,
we notice from 
\eqref{Kjx0} of Proposition \ref{Kj_psi}
that
\begin{equation} \label{Kkj}
\big| K_{k,j} (x) \big|
\lesssim
\| \psi (\cdot) \,
\theta (2^{k-j} \cdot) \|_{ C^M }
\begin{cases}
{2^{-kN_1}} 
	& \text{on}\;\; 
	\Omega_{k}^{1} := \{ 2^{k(1-s)} |x| \le a \}, 
\\
{2^{k(n-\frac{ns}{2})}} 
	& \text{on}\;\; 
	\Omega_{k}^{2} := \{ a< 2^{k(1-s)} |x| \le b \}, 
\\
{2^{-kN_2}|x|^{-N_3}} 
	& \text{on}\;\; 
	\Omega_{k}^{3} := \{ 2^{k(1-s)} |x| >b \} ,
\end{cases}
\end{equation}
where
$a = s 4^{ -|1-s| }$
and 
$b = s 4^{ |1-s| }$.
For $1 \le k \le j+1$,
$\| \psi (\cdot) \, \theta (2^{k-j} \cdot) \|_{ C^M } \lesssim \| \theta \|_{ C^M }$.
Hence 
$K_{j}$ is estimated as
\begin{align*} 
\big| K_{j}(x) \big|
\lesssim 
\| \theta \|_{ C^M }
\bigg(
\sum_{k=1}^{j+1}
{2^{-kN_1}} \ichi_{ \Omega_{k}^{1} }(x)
+
\sum_{k=1}^{j+1}
{2^{k(n-\frac{ns}{2})}} \ichi_{ \Omega_{k}^{2} }(x)
+
\sum_{k=1}^{j+1}
{2^{-kN_2}|x|^{-N_3}} \ichi_{ \Omega_{k}^{3} }(x)
\bigg) .
\end{align*}
Hence,
in the following argument,
we shall estimate 
the above three sums.

\eqref{Kj_phi_s<1}
Let $0<s<1$ and write
$L = \frac{n}{2} + \frac{n}{2(1-s)}$.
To prove the estimate mentioned in (1), we first prove that 
\[
|x|^{L} \big| K_{j}(x) \big|
\lesssim \| \theta \|_{ C^M },
\quad
|x| \le 1.
\]
We assume $|x| \le 1$.
For the sum with $\ichi_{\Omega_{k}^{1}}(x)$,
we have for $N_1 > 0$,
\begin{align*}
|x|^{L}
\sum_{k=1}^{j+1}
{2^{-kN_1}} \ichi_{ \Omega_{k}^{1} }(x)
&
=
\sum_{k=1}^{j+1}
\big( 2^{k (1-s)} |x| \big)^{L} \,
2^{-kL (1-s)} \,
{2^{-kN_1}} 
\ichi_{ \Omega_{k}^{1} }(x)
\lesssim
1.
\end{align*}
For the sum with $\ichi_{\Omega_{k}^{2}}(x)$,
since
$-L(1-s) + (n-\frac{ns}{2}) = 0$
and
overlaps of $\Omega_{k}^{2}$ are finite,
\begin{align*}
|x|^{L}
\sum_{k=1}^{j+1}
{2^{k(n-\frac{ns}{2})}} \ichi_{ \Omega_{k}^{2} } (x)
&
=
\sum_{k=1}^{j+1}
\big( 2^{k (1-s)} |x| \big)^{L} \,
2^{-kL (1-s)} \,
{2^{k(n-\frac{ns}{2})}} 
\ichi_{ \Omega_{k}^{2} } (x)
\approx
\sum_{k=1}^{j+1}
\ichi_{ \Omega_{k}^{2} } (x)
\lesssim
1 .
\end{align*}
For the sum with $\ichi_{\Omega_{k}^{3}}(x)$,
we have
by choosing 
$N_3 > L$ and $N_2 > (1-s)(N_3-L)$,
\begin{align*}
|x|^{L}
\sum_{k=1}^{j+1}
{2^{-kN_2}
|x|^{-N_3}} 
\ichi_{ \Omega_{k}^{3} } (x)
&
=
\sum_{k=1}^{j+1}
\big( 2^{k (1-s)} |x| \big)^{L-N_3} \,
2^{k(N_3-L) (1-s)} \,{2^{-kN_2} }
\ichi_{ \Omega_{k}^{3} } (x)
\lesssim
1 .
\end{align*}
Combining the above inequalities,
we obtain the assertion \eqref{Kj_phi_s<1}
for the case $|x| \le 1$.

We next prove that 
\[
|x|^N \big|K_j(x) \big|
\lesssim
\|\theta\|_{C^{M}},
\quad
|x| > 1,
\]
which can be shown by a similar way.
In this case,
the sum with respect to $\ichi_{ \Omega_{k}^{1} }$ vanishes.
Replacing $L$ by $N$ in the above, 
and taking $N_2, N_3>0$ satisfying that
$N_3 > N$ and $N_2 > (1-s)(N_3-N)$,
we have the desired estimate for the sum 
with respect to $\ichi_{ \Omega_{k}^{3} }$.
For the sum with respect to $\ichi_{ \Omega_{k}^{2} }$,
since $|x| > 1$ gives $2^{k(1-s)} < b$ on $\Omega_{k}^{2}$,
the cardinality of $k$ is finite.
Furthermore, since $1 < |x| \le 2^{-(1-s)}b$ if 
$x \in \Omega_{k}^{2} \cap \{|x| > 1\}$, $k \ge 1$,
it follows that 
$|x|^N \approx 1$ on $\Omega_{k}^{2} \cap \{|x| > 1\}$.
Thus, we obtain the assertion \eqref{Kj_phi_s<1}
for $|x| > 1$.

\eqref{Kj_phi_s>1}
In the case $1<s<\infty$,
we first observe that,
if $|x| \le s 2^{-(s-1)} =a 2^{s-1}$ 
or 
$|x| > s 8^{s-1} \, 2^{j(s-1)} = b 2^{(j+1)(s-1)}$,
then 
$|x| \le a2^{k(s-1)}$
or
$|x| > b2^{k(s-1)}$
holds for all $1 \le k \le j+1$,
that is,
$x \in \Omega_{k}^{1}$ or 
$x \in \Omega_{k}^{3}$.
By \eqref{decomposekernel} and \eqref{Kkj},
this implies that for any $N_1 > 0$
\begin{align*}
\big| K_{j}(x) \big|
\lesssim 
\| \theta \|_{ C^M }
\sum_{k=1}^{j+1}
{2^{-kN_1}} 
\lesssim 
\| \theta \|_{ C^M },
\quad
|x| \le s 2^{-(s-1)} ,
\end{align*}
and, for any $N_2 >0 $ and $N_3 \ge 0$,
\begin{align*}
\big| K_{j}(x) \big|
\lesssim 
\| \theta \|_{ C^M }
\sum_{k=1}^{j+1}
{2^{-kN_2}|x|^{-N_3}} 
\lesssim 
\| \theta \|_{ C^M }
|x|^{-N_3} ,
\quad
|x| > s 8^{s-1} \, 2^{j(s-1)}.
\end{align*}
Hence,
to obtain the desired result,
it suffices to prove that
\begin{equation*}
|x|^{L}
\big| K_{j}(x) \big|
\lesssim
\| \theta \|_{ C^M } 
\quad\text{on}\quad 
\Omega_{j} := 
\{ s 2^{-(s-1)} < |x| \le s 8^{s-1} \, 2^{j(s-1)} \} ,
\end{equation*}
where, we wrote 
$L = \frac{n}{2} - \frac{n}{2(s-1)}$.
Here, we note that 
$L \le 0$ for $1 < s \le 2$ and 
$L \ge 0$ for $2 \le  s < \infty$
and write $L_+ = \max\{ 0,L \}$.
Assume $x \in \Omega_{j}$.
For the sum with  $\ichi_{\Omega_{k}^{1}}(x)$, 
we have for $N_1>L_+ (s-1)$
\begin{align*}
|x|^{L}
\sum_{k=1}^{j+1}
{2^{-kN_1}} \ichi_{ \Omega_{k}^{1} }(x)
&
\lesssim
\sum_{k=1}^{j+1}
\big( 2^{k (1-s)} |x| \big)^{L_+} \,
2^{-kL_+ (1-s)} \,
{2^{-kN_1}} 
\ichi_{ \Omega_{k}^{1} }(x)
\lesssim
1.
\end{align*}
For the sum with $\ichi_{\Omega_{k}^{2}}(x)$, 
since
$-L(1-s) + (n-\frac{ns}{2}) = 0$
and
overlaps of $\Omega_{k}^{2}$ are finite,
\begin{align*}
|x|^{L}
\sum_{k=1}^{j+1}
{2^{k(n-\frac{ns}{2})}} \ichi_{ \Omega_{k}^{2} }(x)
&
=
\sum_{k=1}^{j+1}
\big( 2^{k (1-s)} |x| \big)^{L} \,
2^{-kL (1-s)} \,
{2^{k(n-\frac{ns}{2})}} 
\ichi_{ \Omega_{k}^{2} }(x)
\approx
\sum_{k=1}^{j+1}
\ichi_{ \Omega_{k}^{2} }(x)
\lesssim
1 .
\end{align*}
For the sum with $\ichi_{\Omega_{k}^{3}}(x)$,
we have
by choosing 
$N_2 > 0$ and 
$N_3 > L_+$
\begin{align*}
|x|^{L}
\sum_{k=1}^{j+1}
{2^{-kN_2}
|x|^{-N_3}} 
\ichi_{ \Omega_{k}^{3} }(x)
&
\lesssim
\sum_{k=1}^{j+1}
\big( 2^{k (1-s)} |x| \big)^{L_+-N_3} \,
2^{k(N_3-L_+) (1-s)} \,{2^{-kN_2} }
\ichi_{ \Omega_{k}^{3} }(x)
\lesssim
1 .
\end{align*}
Therefore we complete the proof of 
the assertion \eqref{Kj_phi_s>1}.
\end{proof}

\section{Lemmas}
\label{lemmas}

In this section, 
we prepare some lemmas
for our main theorems.
Let $\theta \in \calS (\R^n)$ satisfy
$\supp \theta \subset 
\{|\xi|\le 2 \} $
and $\zeta$ be as in Notation \ref{notation}.
Then, we define,
for $j \in \N$,
\begin{align*}
S_{j} f (x) 
&
=
\Big( 
e^{ i |\xi|^{s} } \, 
\zeta(\xi) \theta (2^{-j} \xi ) \,
\widehat{f} (\xi) 
\Big)^{\vee} (x) ,
\\
T f (x)
&
=
\Big( 
e^{ i |\xi|^{s} } \, 
\theta (\xi)
\widehat{f} (\xi) 
\Big)^{\vee} (x) ,
\end{align*}
which can be
represented as follows:
\begin{align} \label{kernels}
\begin{split}
S_{j} f (x) 
= 
K_{j} \ast f (x)
&
\quad\text{with}\quad
K_{j} (x)
=
\Big( 
e^{ i |\xi|^{s} } \, 
\zeta(\xi) \theta (2^{-j} \xi ) 
\Big)^{\vee} (x) ,
\\
T f (x)
= 
L \ast f (x) 
&
\quad\text{with}\quad
L (x)
=
\Big( 
e^{ i |\xi|^{s} } \, 
\theta (\xi) 
\Big)^{\vee} (x) .
\end{split}
\end{align}
Notice that
the kernel $K_j$
already appeared in
Corollary \ref{Kj_phi}.

In the succeeding subsections,
we will give several inequalities 
for the operators $S_j$ and $T$.
Some of the inequalities are concerned with 
$h^1$-atoms. 
Recall that, in our definition of $h^1$-atom, 
the radius $r$ of the supporting ball of an $h^1$-atom 
satisfies $r\le 1$ (see Notation \ref{notation}).

\subsection{Inequalities for $s \neq 1$}

In this subsection, 
we show some inequalities 
which will be used
for proving the boundedness
in both cases $s<1$ and $s>1$.

\begin{lem} \label{Sj_Lp}
Let $0 < s < 1$ or $1 < s < \infty$
and let $1 \le p \le \infty$.
Then, there exist 
$c>0$ and $M \in \N$ 
such that 
\[
\|S_j f\|_{L^p(\R^n)}
\le c\,
(2^j)^{ sn |\frac{1}{p} - \frac{1}{2}| }
\|\theta\|_{C^M}
\|f\|_{L^p(\R^n)} 
\]
for all $j \in \N_{0}$.
\end{lem}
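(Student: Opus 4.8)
The plan is to obtain the $L^p$ bound by interpolating between the case $p=2$ and the cases $p\in\{1,\infty\}$. At $p=2$, Plancherel's theorem gives at once
\[
\|S_j f\|_{L^2}=\big\| e^{i|\xi|^s}\zeta(\xi)\theta(2^{-j}\xi)\,\widehat{f}(\xi)\big\|_{L^2}\big/(2\pi)^{n/2}\le \|\theta\|_{C^0}\,\|f\|_{L^2},
\]
which is the asserted bound since $sn|1/2-1/2|=0$. For $p=1$ and $p=\infty$ I will prove the kernel estimate
\[
\|K_j\|_{L^1}\le c\,\|\theta\|_{C^M}\,2^{jns/2},
\]
and then, since $S_j f=K_j*f$ by \eqref{kernels}, Young's inequality yields $\|S_j f\|_{L^p}\le\|K_j\|_{L^1}\|f\|_{L^p}$ for $p=1,\infty$ with the exponent $sn/2$. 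As $S_j$ is convolution with a fixed kernel, the Riesz--Thorin theorem applied to the three endpoint estimates gives, for $1\le p\le 2$, operator norm $\lesssim\|\theta\|_{C^M}(2^{jns/2})^{2(1/p-1/2)}=\|\theta\|_{C^M}\,2^{jns(1/p-1/2)}$, and for $2\le p\le\infty$ operator norm $\lesssim\|\theta\|_{C^M}\,2^{jns(1/2-1/p)}$; in both cases this equals $\|\theta\|_{C^M}(2^j)^{sn|1/p-1/2|}$, which is the claim.

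It remains to prove the $L^1$ bound on $K_j$. Here I will use the decomposition $K_j=\sum_{k=1}^{j+1}K_{k,j}$ from \eqref{decomposekernel} together with the pointwise bound \eqref{Kkj}, which is a direct consequence of Proposition \ref{Kj_psi}, and the observation (already made in the proof of Corollary \ref{Kj_phi}) that $\|\psi(\cdot)\theta(2^{k-j}\cdot)\|_{C^M}\lesssim\|\theta\|_{C^M}$ for $1\le k\le j+1$. Each of the sets $\Omega_k^1,\Omega_k^2,\Omega_k^3$ is governed by the scale $|x|\sim 2^{k(s-1)}$: the ball $\Omega_k^1$ and the annulus $\Omega_k^2$ have Lebesgue measure $\approx 2^{kn(s-1)}$, while $\Omega_k^3=\{|x|\gtrsim 2^{k(s-1)}\}$. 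On $\Omega_k^2$, where $|K_{k,j}|\lesssim\|\theta\|_{C^M}2^{k(n-ns/2)}$, the contribution to $\|K_j\|_{L^1}$ is $\approx\|\theta\|_{C^M}2^{k(n-ns/2)}2^{kn(s-1)}=\|\theta\|_{C^M}2^{kns/2}$, and summing over $1\le k\le j+1$ produces exactly $\lesssim\|\theta\|_{C^M}2^{jns/2}$. On $\Omega_k^1$, where $|K_{k,j}|\lesssim\|\theta\|_{C^M}2^{-kN_1}$, the contribution is $\lesssim\|\theta\|_{C^M}2^{-kN_1+kn(s-1)}$, which sums to $\lesssim\|\theta\|_{C^M}$ once $N_1>n\max\{s-1,0\}$. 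On $\Omega_k^3$, where $|K_{k,j}|\lesssim\|\theta\|_{C^M}2^{-kN_2}|x|^{-N_3}$, integrating $|x|^{-N_3}$ over $\{|x|\gtrsim 2^{k(s-1)}\}$ with $N_3>n$ gives $\lesssim 2^{k(s-1)(n-N_3)}$, so the contribution is $\lesssim\|\theta\|_{C^M}2^{-kN_2+k(s-1)(n-N_3)}$, which sums to $\lesssim\|\theta\|_{C^M}$ once $N_2$ is chosen large enough (depending on $n,s,N_3$). Since $2^{jns/2}\ge 1$ for $j\ge 0$, adding the three parts yields $\|K_j\|_{L^1}\lesssim\|\theta\|_{C^M}2^{jns/2}$, with $M$ the value furnished by Proposition \ref{Kj_psi} for the now-fixed $N_1,N_2,N_3$, i.e.\ $M=M(n,s)$.

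The main (and only mildly delicate) point is the exponent bookkeeping: one must check that the contributions of $\Omega_k^1$ and $\Omega_k^3$ are geometrically summable in $k$ in both regimes $0<s<1$ and $1<s<\infty$ — the sign of $1-s$ flips, so the quantities $2^{kn(s-1)}$ change from decaying to growing, which is why $N_1,N_2,N_3$ must be taken large in an $s$-dependent way — and that the decisive identity $2^{k(n-ns/2)}\cdot 2^{kn(s-1)}=2^{kns/2}$ is valid for all $s$, so that the $\Omega_k^2$ part contributes precisely the factor $2^{jns/2}$ (and not something off by a worse power), which is exactly what is needed for the interpolated exponent to come out as $sn|1/p-1/2|$. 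Everything else is routine.
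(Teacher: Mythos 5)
Your overall strategy (Plancherel at $p=2$, Young's inequality at $p=1,\infty$ via the kernel bound $\|K_j\|_{L^1}\lesssim\|\theta\|_{C^M}2^{jns/2}$, then interpolation) is exactly the paper's. Where you diverge is the proof of the kernel $L^1$ estimate. You derive it from the pointwise stationary-phase bounds of Proposition~\ref{Kj_psi} via \eqref{Kkj}: integrate $|K_{k,j}|$ over each of the three regions $\Omega_k^1,\Omega_k^2,\Omega_k^3$, check that the $\Omega_k^2$ pieces contribute $\approx\|\theta\|_{C^M}2^{kns/2}$ (geometric in $k$, peaking at $k=j+1$), and that the other two regions contribute $O(\|\theta\|_{C^M})$ once $N_1,N_2,N_3$ are taken large enough in an $s$-dependent way; your exponent bookkeeping is correct in both regimes $s<1$ and $s>1$. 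The paper instead proves the \emph{weighted} $L^2$ bound $\|(1+2^{j(1-s)}|x|)^N K_j\|_{L^2}\lesssim\|\theta\|_{C^M}(2^j)^{n/2}$ by Plancherel and H\"older on the dyadic pieces $K_{k,j}$ --- an argument that never invokes the stationary-phase asymptotics --- and then gets the $L^1$ bound via Cauchy--Schwarz against $(1+2^{j(1-s)}|x|)^{-N}$ with $n/2<N<n/(2(1-s))$. The paper explicitly remarks that the route you take (through Corollary~\ref{Kj_phi} / Proposition~\ref{Kj_psi}) is available, but prefers the elementary one both for its own interest and because the weighted $L^2$ estimate \eqref{K_j_no_1+weighedL^2} is reused in the proof of Lemma~\ref{Sj_L2}~\eqref{Sj_L2_torus}. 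So your proof is correct and self-contained given Proposition~\ref{Kj_psi}, but it spends the heavier machinery where the paper deliberately economizes, and it does not produce the auxiliary weighted estimate that the paper needs downstream.
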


\begin{proof}
This lemma follows from 
the trivial $L^2$-boundedness, 
the $L^1$-boundedness and $L^\infty$-boundedness 
with the aid of complex interpolation.
The $L^1$ and $L^\infty$-boundedness 
follow from the kernel estimate below:
\begin{equation} \label{estimateKjL1} 
\big\|
K_{j}
\big\|_{ L^1 (\R^n) }
\lesssim 
\| \theta \|_{ C^{M} }
(2^{j})^{ \frac{sn}{2} } 
\end{equation}
for some constant $M \in \N$.
This inequality is derived from
the following fact:
for $N \ge 0$ with
\begin{align} \label{N_no_condition}
\begin{cases}
0 \le N < \frac{n}{2(1-s)}, &
\textrm{if}\;\;
0 < s < 1,
\\
0 \le N < \infty, &
\textrm{if}\;\;
1 < s < \infty ,
\end{cases}
\end{align}
the kernel $K_j$ of $S_{j}$
satisfies that
\begin{align} \label{K_j_no_1+weighedL^2}
\Big\|
\big( 1+2^{j(1-s)} |x| \big)^{N} \, K_{j}
\Big\|_{ L^2 (\R^n) }
\lesssim 
\| \theta \|_{ C^{M} }
(2^{j})^{ \frac{n}{2} } .
\end{align}
In fact,
choosing $N\ge0$ that satisfies
\begin{align*}
\begin{cases}
\frac{n}{2} <N < \frac{n}{2(1-s)}, &
\textrm{if}\;\;
0 < s < 1,
\\
\frac{n}{2} < N < \infty, &
\textrm{if}\;\;
1 < s < \infty,
\end{cases}
\end{align*}
(notice that $\frac{n}{2} < \frac{n}{2(1-s)}$ if $0< s < 1$), 
and using Cauchy--Schwarz inequality and 
\eqref{K_j_no_1+weighedL^2}, we 
obtain 
\begin{align*}
\big\| 
K_{j}
\big\|_{ L^1 }
&
\le
\Big\| 
\big(
1+ 2^{j(1-s)}|x|
\big)^{-N}
\Big\|_{ L^2  }
\Big\|
\big(
1 + 2^{j(1-s)}|x|
\big)^N
K_{j}
\Big\|_{ L^2 }
\\
&
\lesssim
\|\theta\|_{C^M}
( 2^{j} )^{-\frac{(1-s)n}{2}}
( 2^{j} )^{\frac{n}{2}}
=
\|\theta\|_{C^M}
( 2^{j} )^{\frac{sn}{2}}.
\end{align*}
Although both \eqref{estimateKjL1} and 
\eqref{K_j_no_1+weighedL^2} can be shown by the use of 
Corollary \ref{Kj_phi}, 
here we shall give an elementary proof of \eqref{K_j_no_1+weighedL^2}, 
which may be of independent interest. 
We will also use the inequality \eqref{K_j_no_1+weighedL^2} 
in the proof of the next lemma. 

Hence, we move on proving that
\eqref{K_j_no_1+weighedL^2} holds
for $N \ge 0$ satisfying \eqref{N_no_condition}.
To this end,
it is sufficient to show that
\begin{align} \label{K_j_no_weighedL^2}
\big\|
\big( 2^{j(1-s)} |x| \big)^{N} \, K_{j}
\big\|_{ L^2 (\R^n) }
\lesssim 
\| \theta \|_{ C^{M} }
(2^{j})^{ \frac{n}{2} } ,
\quad \textrm{
if $N \ge 0$ 
satisfies \eqref{N_no_condition}.
}\;\;
\end{align}

The case $N=0$ obviously follows from
Plancherel's theorem,
and thus, we shall assume that $N>0$.
We recall the decomposition 
\eqref{decomposekernel}:
\begin{align*} 
K_{j} (x)
= 
\sum_{k=1}^{j+1} 
K_{k,j} (x)
\quad\textrm{with}\quad
K_{k,j} (x)
=
\Big( 
e^{ i |\xi|^{s} } \, 
\psi (2^{-k} \xi) \theta (2^{-j} \xi ) 
\Big)^{\vee} (x) .
\end{align*}
A simple calculation gives that
for $\alpha \in (\N_{0})^{n}$ and $1 \le k \le j+1$
\[
\Big|
\partial_{\xi}^{\alpha}
\Big(
e^{i|\xi|^s} \,
\psi (2^{-k} \xi )
\theta (2^{-j}\xi) 
\Big)
\Big|
\lesssim
\| \theta \|_{ C^{|\alpha|} }
(2^{k})^{(s-1)|\alpha|}
\ichi_{ \{ 2^{k-1} \le |\xi| \le 2^{k+1} \} } ,
\]
and thus,
by Plancherel's theorem,
\[
\big\| x^{\alpha} K_{k,j} (x) \big\|_{L^2}
\lesssim 
\| \theta \|_{ C^{|\alpha|} }
(2^{k})^{(s-1)|\alpha|} \,
(2^{k})^{ \frac{n}{2} } .
\]
Here, take $0 < t < 1$ and $\gamma \in \N$
satisfying $N = t\gamma > 0$.
Then,
by H\"older's inequality
\begin{align*}&
\big\| |x|^{N} K_{k,j} \big\|_{L^2}
= 
\Big\| 
\big( |x|^{\gamma} |K_{k,j}| \big)^{t} \,
\big| K_{k,j} \big|^{1-t} 
\Big\|_{L^2}
\\
&
\lesssim
\sum_{ |\alpha|=\gamma }
\big\| x^{\alpha} K_{k,j} \big\|_{L^2}^{t}
\big\| K_{k,j}\big\|_{L^2}^{1-t}
\lesssim
\| \theta \|_{ C^{\gamma} }
(2^{k})^{(s-1)N} \,
(2^{k})^{ \frac{n}{2} } .
\end{align*}
Therefore, 
since the condition \eqref{N_no_condition} especially means
$\frac{n}{2} + (s-1)N > 0$ in the case $0<s<1$,
\begin{align*}
\big\| |x|^{N} K_{j} \big\|_{L^2}
&
\le 
\sum_{k=1}^{j+1} 
\big\| |x|^{N} K_{k,j} \big\|_{L^2}
\lesssim
\| \theta \|_{ C^{\gamma} }
\sum_{k=1}^{j+1} 
(2^{k})^{(s-1)N} \,
(2^{k})^{ \frac{n}{2} } 
\\
&
\approx 
\| \theta \|_{ C^{\gamma} }
(2^{j})^{(s-1)N} \,
(2^{j})^{ \frac{n}{2} } .
\end{align*}
This implies \eqref{K_j_no_weighedL^2},
and thus the proof is completed.
\end{proof}

\begin{lem} \label{Sj_L2}
Let $0 < s < 1$ or $1 < s < \infty$
and let $0 \le t \le 1$.
Suppose $f$ is an $h^1$-atom 
supported on a ball of radius $r$
centered at the origin in $\R^n$. 
Then the following hold.
\begin{enumerate}

\item \label{Sj_L2_Rn}
There exists $c>0$ depending only on $n$ 
such that
\[
\big\| S_{j} f\big\|_{ L^2 (\R^n) }
\le c \,
\| \theta \|_{ C^{0} } \,
( 2^{j} )^{ \frac{n}{2} } \,
\min \big\{ (2^{j} r)^{t}, ( 2^{j} r )^{ -\frac{nt}{2} } \big\} 
\]
for all $j \in \N_{0}$. 

\item \label{Sj_L2_torus}
If $A \ge 2r$ and if $N$ satisfies that
\begin{align*}
\begin{cases}
0 \le N < \frac{n}{2(1-s)}, 
	&\textrm{if}\;\; 0 < s < 1,
\\
0 \le N < \infty, 
	& \textrm{if}\;\; 1 < s < \infty ,
\end{cases}
\end{align*}
then
\[
\big\| S_{j} f(x) \big\|_{ L^2 (A \le |x| \le 2A) }
\le c\,
\| \theta \|_{ C^{M} }
( 2^{j} )^{ \frac{n}{2} } 
\big( 2^{j(1-s)} A \big)^{ -N(1-t) }
\min \big\{ (2^{j} r)^{t}, ( 2^{j} r )^{ -\frac{nt}{2} } \big\} 
\]
for all $j \in \N_{0}$,
where
the constants $c>0$ and $M\in \N$ depend only on 
$n$, $s$, and $N$. 
\end{enumerate}
\end{lem}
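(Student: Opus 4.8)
The plan is to prove both parts simultaneously by estimating $S_j f = K_j \ast f$ in two different ways and then combining. Since $f$ is an $h^1$-atom supported on $B(0,r)$ with $\|f\|_{L^\infty} \le r^{-n}$ (and, when $r < 1$, with vanishing integral), it is natural to split according to whether $2^j r$ is small or large. When $2^j r \le 1$, I would use Plancherel together with the smoothing gained from the moment condition (if $r<1$) or, more robustly, bound $\|S_j f\|_{L^2} \le \|K_j\|_{L^2} \|f\|_{L^1} \lesssim (2^j)^{n/2} \cdot 1$ from the $N=0$ case of \eqref{K_j_no_1+weighedL^2}, and then trade a factor of $(2^j r)^t$ by interpolating this with the elementary bound $\|S_j f\|_{L^2} \le \|\widehat{S_j f}\|_{L^2} \cdot (2\pi)^{-n/2}$ combined with the support/size of $\widehat f$; alternatively one interpolates between $t=0$ and $t=1$, where the $t=1$ endpoint uses that the symbol of $S_j$ is supported in $|\xi| \lesssim 2^j$ so $\|S_j f\|_{L^2} \lesssim (2^j)^{n/2}(2^j r) $ can be extracted from $\|f\|_{L^1}$ and a gradient estimate. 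When $2^j r > 1$, I would instead write $\|S_j f\|_{L^2} \le \|K_j\|_{L^2}\|f\|_{L^1} \lesssim (2^j)^{n/2}$ and interpolate against the trivial $L^\infty\to L^\infty$-type bound $\|S_j f\|_{L^2} \le \|K_j\|_{L^1}\|f\|_{L^2} \lesssim (2^j)^{sn/2}\,(2^j)^{n/2}(2^j r)^{-n/2}$ coming from \eqref{estimateKjL1} and $\|f\|_{L^2}\le r^{-n/2}$; interpolating these two with weight $t$ yields the factor $(2^j r)^{-nt/2}$. Taking the minimum over the two regimes gives part \eqref{Sj_L2_Rn}.

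For part \eqref{Sj_L2_torus}, the extra ingredient is the weighted estimate \eqref{K_j_no_1+weighedL^2}, which localizes $K_j$: on the annulus $\{A \le |x| \le 2A\}$, since $A \ge 2r$ and $f$ lives in $B(0,r)$, for $y$ in the support of $f$ we have $|x - y| \approx |x| \approx A$, so $1 + 2^{j(1-s)}|x-y| \gtrsim 1 + 2^{j(1-s)}A$. Thus I would write
\[
\big\| S_j f \big\|_{L^2(A \le |x| \le 2A)}
\le \big( 1 + 2^{j(1-s)}A \big)^{-N}
\Big\| \big(1 + 2^{j(1-s)}|\cdot|\big)^{N} K_j \Big\|_{L^2} \|f\|_{L^1}
\lesssim \big( 1 + 2^{j(1-s)}A \big)^{-N} (2^j)^{n/2},
\]
which handles the case $t = 0$ (note $(2^j r)^0 = 1$). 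This is then interpolated against the unlocalized bound from part \eqref{Sj_L2_Rn} (which corresponds to killing the weight, i.e. exponent $0$ on $1 + 2^{j(1-s)}A$) with weights $1-t$ and $t$; the geometric interpolation of the weight factor produces $\big(1 + 2^{j(1-s)}A\big)^{-N(1-t)}$, which I would replace by the stated $\big(2^{j(1-s)}A\big)^{-N(1-t)}$ at the cost of absorbing a harmless constant when $2^{j(1-s)}A \lesssim 1$ — though one should be slightly careful here, since $\big(2^{j(1-s)}A\big)^{-N(1-t)}$ can be large when $2^{j(1-s)}A$ is small, so the inequality is genuinely only interesting (and sharp) in the regime $2^{j(1-s)}A \gtrsim 1$; I expect the intended reading is with the factor $1 + 2^{j(1-s)}A$, and I would phrase the interpolation to land there and then note the trivial comparison. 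The $\min\{(2^jr)^t,(2^jr)^{-nt/2}\}$ factor simply rides along from part \eqref{Sj_L2_Rn} since it is the same in both endpoints of the weight-interpolation.

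The main obstacle is the interpolation bookkeeping in part \eqref{Sj_L2_torus}: one must interpolate in two independent parameters at once (the weight power $N$ against $0$, controlled by the exponent $1-t$, and simultaneously the atom-scaling factor, controlled by $t$), and ensure the constants and the range condition on $N$ in \eqref{K_j_no_1+weighedL^2} are respected uniformly in $j$, $A$, and $r$. A clean way to organize this is to fix $t \in [0,1]$, set up the two estimates $\|S_j f\|_{L^2(A\le|x|\le 2A)} \lesssim (2^j)^{n/2}\min\{\dots\}$ (no weight) and $\|S_j f\|_{L^2(A\le|x|\le 2A)} \lesssim (2^j)^{n/2}(1+2^{j(1-s)}A)^{-N}$ (with the $t=0$ version of the min-factor, which is just $1$, so one actually needs the weighted bound to also carry $\min\{(2^jr)^0,\dots\}=1$ — trivially true), and then take the geometric mean with weights $t$ and $1-t$; the subtlety is that the min-factor is not literally multiplicative across the two bounds unless I first upgrade the weighted bound to carry the same $\min\{(2^jr)^t,(2^jr)^{-nt/2}\}$, which can be done by repeating the regime split ($2^jr \lessgtr 1$) inside the weighted estimate exactly as in part \eqref{Sj_L2_Rn}. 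I would therefore actually prove, for each regime of $2^jr$, a weighted-$L^2$ bound with the correct power of $2^jr$ already present, and interpolate the weight power last.
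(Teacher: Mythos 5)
Your overall plan matches the paper's: Plancherel for the $t=0$ endpoint, the moment/Taylor argument for the $(2^jr)^1$ branch of the $t=1$ endpoint, the weighted kernel estimate for the annulus bound, and a single geometric interpolation in $t$. Two points, one a genuine gap and one a needless worry.

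The gap is in part \eqref{Sj_L2_Rn}, regime $2^jr > 1$. You bound $\|S_j f\|_{L^2}\le \|K_j\|_{L^1}\|f\|_{L^2}$ by Young's inequality and pay $\|K_j\|_{L^1}\lesssim (2^j)^{sn/2}$, arriving at $(2^j)^{sn/2}(2^j)^{n/2}(2^jr)^{-n/2}$. That extra $(2^j)^{sn/2}$ does not go away under the interpolation (with weight $t$ you would get a spurious factor $(2^j)^{tsn/2}$), so your version of the $t=1$ endpoint is strictly weaker than what the lemma claims. The correct move is to stay on the Fourier side: by Plancherel, $\|S_j f\|_{L^2}\le \|\widehat{K_j}\|_{L^\infty}\|f\|_{L^2}\le \|\theta\|_{C^0}\|f\|_{L^2}\lesssim \|\theta\|_{C^0}\,r^{-n/2}=\|\theta\|_{C^0}(2^j)^{n/2}(2^jr)^{-n/2}$, with no loss. (You do gesture at the Plancherel bound earlier in your write-up, but then abandon it for the lossy Young bound; the argument as written does not close.)

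The needless worry is in part \eqref{Sj_L2_torus}. First, there is no issue in passing from $(1+2^{j(1-s)}A)^{-N(1-t)}$ to $(2^{j(1-s)}A)^{-N(1-t)}$: the latter is the \emph{larger} quantity, so the stated inequality is weaker, not stronger, and the trivial comparison $(1+u)^{-N(1-t)}\le u^{-N(1-t)}$ is all that's needed. (In fact the paper uses the homogeneous weighted estimate $\|(2^{j(1-s)}|x|)^N K_j\|_{L^2}\lesssim (2^j)^{n/2}$ directly, so the factor appears without the $1+$ from the start.) Second, you do not need to interpolate in ``two independent parameters.'' It is a single one-parameter interpolation: write $X = X^{1-t}X^{t}$ with $X=\|S_jf\|_{L^2(A\le|x|\le 2A)}$, bound the factor $X^{1-t}$ by the weighted estimate (which carries $(2^{j(1-s)}A)^{-N}$ and no $r$-dependence) and the factor $X^{t}$ by the $t=1$ bound from part \eqref{Sj_L2_Rn} restricted to the annulus (which carries $\min\{2^jr,(2^jr)^{-n/2}\}$ and no weight). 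Raising to powers $1-t$ and $t$ produces both $(2^{j(1-s)}A)^{-N(1-t)}$ and $\min\{(2^jr)^t,(2^jr)^{-nt/2}\}$ simultaneously, because $\min\{a,b\}^t=\min\{a^t,b^t\}$. Your proposal to first re-prove a weighted bound with the $r$-factor already present and interpolate the weight separately is not wrong in spirit but is more work than needed.
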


\begin{proof}

\eqref{Sj_L2_Rn}
We first observe that,
by Plancherel's theorem,
\begin{align} \label{L2_Rn_t=0}
\| S_{j} f \|_{L^2}
=
\| K_{j} \ast f \|_{ L^2 }
\le
\| K_{j} \|_{ L^2 }
\| f \|_{ L^1 }
\lesssim
( 2^{j} )^{ \frac{n}{2} }
\| \theta \|_{ C^{0} }.
\end{align}
We next show that
\begin{align} \label{L2_Rn_t=1}
\| S_{j} f \|_{L^2}
\lesssim
( 2^{j} )^{ \frac{n}{2} } \,
\min \big\{ 2^{j} r, ( 2^{j} r )^{ -\frac{n}{2} } \big\} \,
\| \theta \|_{ C^{0} }
\end{align}
holds for all $h^1$-atoms $f$.
If $f$ is an $h^1$-atom of second kind 
({\it{i.e.}}, $r=1$),
then,
by Plancherel's theorem,
$\| S_{j} f \|_{ L^2 } \lesssim \| \theta \|_{ C^{0} }$,
which is identical with \eqref{L2_Rn_t=1} for $r = 1$.
We shall next consider the case that
$f$ is an $h^1$-atom of first kind ({\it{i.e.}}, $r<1$).
By Plancherel's theorem,
\[
\| S_{j} f \|_{L^2}
\le
\| \theta \|_{ C^{0} } 
\| f\|_{ L^2 (\R^n) }
\lesssim
\| \theta \|_{ C^{0} } \,
r^{ -\frac{n}{2} } 
= 
\| \theta \|_{ C^{0} } \,
( 2^{j} )^{ \frac{n}{2} } \,
( 2^{j} r)^{ -\frac{n}{2} } .
\]
Moreover,
since $f$ is an $h^1$-atom of first kind
and is supported on a ball centered at the origin,
Taylor's theorem 
with the moment condition 
$\int f = 0$
yields that
\begin{align*}
\big\| S_{j} f \big\|_{L^2}
&
=
\Big\|
\sum_{ |\alpha|=1 }
\int_{ \substack{ |y| \le r \\ 0 < t < 1 } } 
\Big( 
e^{ i |\xi|^{s} } \, 
\xi^{\alpha} \,
\zeta(\xi) \theta (2^{-j} \xi ) 
\Big)^{\vee} (x-ty) \,
y^{\alpha} \, f(y)
\, dydt 
\Big\|_{L^2_{x}}
\\
&
\le
\sum_{ |\alpha|=1 }
\Big\|
\Big( 
e^{ i |\xi|^{s} } \, 
\xi^{\alpha} \,
\zeta(\xi) \theta (2^{-j} \xi ) 
\Big)^{\vee} (x) 
\Big\|_{ L^2_{x} }
\int_{ |y| \le r } \,
|y| \, | f(y) |
\, dy
\\
&
\lesssim
\| \theta \|_{ C^{0} } \,
( 2^{j} )^{ \frac{n}{2} } \,
( 2^{j} r ) ,
\end{align*}
where,
in the last inequality,
we used Plancherel's theorem.
These two estimates
imply \eqref{L2_Rn_t=1}.

Finally,
interpolating \eqref{L2_Rn_t=0} and \eqref{L2_Rn_t=1},
we have for $0 \le t \le 1$
\begin{align*}
&
\| S_{j} f \|_{ L^2 }
=
\Big(
\| S_{j} f \|_{ L^2 }
\Big)^{1-t}
\Big(
\| S_{j} f \|_{ L^2 }
\Big)^{t}
\\
&
\lesssim
\Big(
( 2^{j} )^{ \frac{n}{2} }
\| \theta \|_{ C^{0} } 
\Big)^{1-t}
\Big(
( 2^{j} )^{ \frac{n}{2} } \,
\min \big\{ 2^{j} r, ( 2^{j} r )^{ -\frac{n}{2} } \big\} \,
\| \theta \|_{ C^{0} } 
\Big)^{t}
\\
&
=
( 2^{j} )^{ \frac{n}{2} } \,
\min \big\{ (2^{j} r)^{t} , ( 2^{j} r )^{ -\frac{nt}{2} } \big\}
\| \theta \|_{ C^{0} } ,
\end{align*}
which completes the proof of
the assertion \eqref{Sj_L2_Rn}.

\eqref{Sj_L2_torus}
We observe that,
if $A \ge 2r$, 
$A \le |x| \le 2 A$,
and $|y| \le r \le \frac{A}{2}$,
then
$|x-y| \approx |x| \approx A$.
Since $f$ is an $h^1$-atom 
supported on a ball centered at the origin,
this observation yields that
\begin{align*}
\big\| S_{j} f(x) &
\big\|_{ L^2 (A \le |x| \le 2A) }
=
\Big\|
\big( 2^{j(1-s)} |x| \big)^{-N}
\big( 2^{j(1-s)} |x| \big)^{N} \,
K_{j} \ast f (x)
\Big\|_{ L^2 (A \le |x| \le 2A) }
\\
&
\lesssim
\big( 2^{j(1-s)}  {A} \big)^{-N}
\Big\|
\int_{|y| \le r} 
\big( 2^{j(1-s)} |x-y| \big)^{N}
\big| K_{j}(x-y)\big| \, 
|f(y)| 
\,dy
\Big\|_{ L^2_x (\R^n) } 
\\
&
\lesssim
\big( 2^{j(1-s)}  {A} \big)^{-N}
\Big\|
\big( 2^{j(1-s)} |x| \big)^{N}
 K_{j}(x)
 \Big\|_{L^2}.
\end{align*}
Here,
we recall \eqref{K_j_no_weighedL^2}.
Since the assumption in this assertion
is identical with \eqref{N_no_condition},
by utilizing \eqref{K_j_no_weighedL^2},
we obtain
\begin{align*}
\big\| S_{j} f(x) \big\|_{ L^2 (A \le |x| \le 2A) }
\lesssim
\| \theta \|_{ C^{M} } \,
( 2^{j} )^{ \frac{n}{2} } \,
\big( 2^{j(1-s)} A \big)^{ -N }
\end{align*}
for some constant $M \in \N$.
Also,
the inequality \eqref{L2_Rn_t=1}
obviously holds
if $L^2 (\R^n)$
is replaced by
$L^2 (A \le |x| \le 2 A) $.
Therefore,
interpolating these two inequalities,
we have for $0 \le t \le 1$
\begin{align*}
&
\big\| S_{j} f(x) \big\|_{  {L^2 (A \le |x| \le 2 A)} }
=
\Big(
\| S_{j} f(x) \|_{  {L^2 (A \le |x| \le 2 A)} }
\Big)^{1-t}
\Big(
\| S_{j} f(x) \|_{  {L^2 (A \le |x| \le 2 A)} }
\Big)^{t}
\\
&
\lesssim
\Big(
\| \theta \|_{ C^{M} } \,
( 2^{j} )^{ \frac{n}{2} } \,
\big( 2^{j(1-s)}  {A} \big)^{ -N }
\Big)^{1-t}
\Big(
\| \theta \|_{ C^{0} } \,
( 2^{j} )^{ \frac{n}{2} } \,
\min \big\{ 2^{j} r, ( 2^{j} r )^{ -\frac{n}{2} } \big\}
\Big)^{t}
\\
&
\le
\| \theta \|_{ C^{M} } \,
( 2^{j} )^{ \frac{n}{2} } \,
\big( 2^{j(1-s)}  {A} \big)^{ -N(1-t) }
\min \big\{ (2^{j} r)^{t} , ( 2^{j} r )^{ -\frac{nt}{2} } \big\},
\end{align*}
which completes the proof of 
the assertion \eqref{Sj_L2_torus}.
\end{proof}

\begin{lem} \label{T}
Let $0< s <\infty$.  
Then 
there exist $c>0$ and $M \in \N$ depending only on 
$n$ and $s$ 
such that 
the following hold.
\begin{enumerate}
\item \label{T_Lq}
If $1 \le p \le q \le \infty$, then 
\[
\| T f \|_{ L^{q} ( \R^{n} ) }
\le c\,
\|\theta\|_{ C^{M} }
\|f\|_{ L^{p} (\R^{n}) } .
\]

\item \label{T_Linfty}
If $f$ is 
an $h^1$-atom
supported on 
a ball of radius $r$ 
centered at the origin in $\R^n$ 
and if $A \ge 2r$,
then 
\[
\| Tf \|_{  L^\infty (A \le |x| \le 2 A) }
\le c\, 
A^{ -(n+s) }
\|\theta\|_{ C^{M} }. 
\]
\end{enumerate}
\end{lem}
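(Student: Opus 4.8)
The plan is to reduce both assertions to the single pointwise kernel estimate
\begin{equation}\label{Linq}
\big| L(x) \big|
\le c\,\|\theta\|_{C^{M}}\,\big(1+|x|\big)^{-(n+s)},
\qquad x\in\R^{n},
\end{equation}
with $c=c(n,s)$ and $M=M(n,s)$, and then to obtain (1) and (2) from \eqref{Linq} by soft arguments.

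Granting \eqref{Linq}, part (1) is quick: $\|L\|_{L^{\infty}}\lesssim\int_{|\xi|\le 2}|\theta|\lesssim\|\theta\|_{C^{0}}$ is trivial, while \eqref{Linq} together with $n+s>n$ gives $\|L\|_{L^{1}}\lesssim\|\theta\|_{C^{M}}$; interpolating, $\|L\|_{L^{\rho}}\lesssim\|\theta\|_{C^{M}}$ for every $1\le\rho\le\infty$. Since $Tf=L\ast f$ and since $1\le p\le q\le\infty$ is exactly the range for which $1+1/q=1/\rho+1/p$ admits a solution $\rho\in[1,\infty]$, Young's inequality gives (1). For part (2), if $f$ is an $h^{1}$-atom supported in $\{|y|\le r\}$ then $\|f\|_{L^{1}}\le c_{n}$ (from $\|f\|_{L^{\infty}}\le r^{-n}$ and the support condition), and for $A\le|x|\le 2A$ and $|y|\le r\le A/2$ one has $|x-y|\ge A/2$, whence $\big(1+|x-y|\big)^{-(n+s)}\le c_{n,s}\,A^{-(n+s)}$ for every $A>0$ (treat $A\ge 2$ and $A<2$ separately). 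Thus $|Tf(x)|\le\|f\|_{L^{1}}\sup_{|y|\le r}|L(x-y)|\lesssim A^{-(n+s)}\|\theta\|_{C^{M}}$ on $\{A\le|x|\le 2A\}$, which is (2). (The moment condition on $f$ is not used here.)

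The heart of the matter is \eqref{Linq}. For $|x|\le 1$ both sides are comparable, so assume $|x|\ge 1$. I split off the constant term of the oscillator: writing $e^{i|\xi|^{s}}=1+\big(e^{i|\xi|^{s}}-1\big)$,
\[
L(x)=\big(\theta\big)^{\vee}(x)+\Big(\big(e^{i|\xi|^{s}}-1\big)\theta(\xi)\Big)^{\vee}(x).
\]
The first term is the inverse Fourier transform of a fixed Schwartz function and so is $O_{N}\big(|x|^{-N}\|\theta\|_{C^{N}}\big)$ for every $N$. For the second term I use a Littlewood--Paley decomposition of $\theta$ toward the origin, $\theta(\xi)=\sum_{l\le 2}\theta(\xi)\,\psi(2^{-l}\xi)$ (valid for $\xi\neq 0$), and rescale $\xi=2^{l}\eta$ in each piece, so that the $l$-th contribution is
\[
\frac{2^{ln}}{(2\pi)^{n}}\int_{\R^{n}}
e^{i\,2^{l}x\cdot\eta}\,\big(e^{i\,2^{ls}|\eta|^{s}}-1\big)\,\theta(2^{l}\eta)\,\psi(\eta)\,d\eta .
\]
Since $2^{ls}\le 2^{2s}$ and $|\eta|\approx 1$ on the support, a chain-rule computation gives $\big|\partial_{\eta}^{\alpha}\big(e^{i\,2^{ls}|\eta|^{s}}-1\big)\big|\lesssim 2^{ls}$ for all $\alpha$; hence the amplitude $\big(e^{i\,2^{ls}|\eta|^{s}}-1\big)\theta(2^{l}\eta)\psi(\eta)$ is supported in $\{|\eta|\approx 1\}$ with $C^{N}$-norm $\lesssim 2^{ls}\|\theta\|_{C^{N}}$, uniformly in $l\le 2$. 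Integration by parts in $\eta$ then bounds the $l$-th contribution by $c_{N}\,2^{l(n+s)}\big(1+2^{l}|x|\big)^{-N}\|\theta\|_{C^{M(N)}}$, and summing this over $l\le 2$ with $N>n+s$ — splitting the sum at $2^{l}\approx|x|^{-1}$, both ranges being geometric — yields $|x|^{-(n+s)}\|\theta\|_{C^{M}}$. Combining the two terms gives \eqref{Linq}.

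The main obstacle is exactly obtaining the sharp exponent $n+s$ in \eqref{Linq}: treating $e^{i|\xi|^{s}}\theta(\xi)$ as a generic amplitude — without first subtracting the constant $1$ — only yields the non-integrable bound $|L(x)|\lesssim|x|^{-n}$, since each annular piece then carries the factor $2^{ln}$ rather than $2^{l(n+s)}$. The extra decay $|x|^{-s}$ comes precisely from the vanishing of $e^{i|\xi|^{s}}-1$ to order $|\xi|^{s}$ at the origin, made quantitative by the splitting above; once that is in place the remaining estimates are routine.
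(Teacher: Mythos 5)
Your proof is correct, and the core of the argument --- the kernel bound $|L(x)|\lesssim\|\theta\|_{C^M}(1+|x|)^{-(n+s)}$ via subtracting the constant term of the oscillator, a Littlewood--Paley decomposition toward the origin, and integration by parts with the factor $2^{ls}$ from the vanishing of $e^{i|\xi|^s}-1$ --- is essentially the same as the paper's. (The paper writes $e^{i|\xi|^s}-1=i|\xi|^s\int_0^1 e^{it|\xi|^s}\,dt$ to make the $|\xi|^s$ explicit, where you chain-rule directly through $e^{i2^{ls}|\eta|^s}-1$; both are equivalent, and your observation that $(2^{ls})^k\lesssim_{\alpha,s}2^{ls}$ for $l\le 2$, $1\le k\le|\alpha|$ is what makes the latter go through uniformly.) Your part (2) is identical to the paper's.

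The one genuine divergence is in part (1). The paper obtains the $L^p\to L^q$ bound by first showing $\|L\|_{L^1}\lesssim\|\theta\|_{C^M}$, hence $\|Tf\|_{L^q}\lesssim\|\theta\|_{C^M}\|\widetilde\theta(D)f\|_{L^q}$ for an auxiliary cutoff $\widetilde\theta=1$ on $\{|\xi|\le2\}$, and then invoking Nikol'skij's (Bernstein's) inequality $\|\widetilde\theta(D)f\|_{L^q}\lesssim\|f\|_{L^p}$ for band-limited functions. You instead read off $\|L\|_{L^\rho}\lesssim\|\theta\|_{C^M}$ for every $1\le\rho\le\infty$ directly from the pointwise kernel bound (since $(1+|x|)^{-(n+s)}\in L^1\cap L^\infty$), and apply Young's inequality with the unique $\rho$ satisfying $1+1/q=1/\rho+1/p$; as you note, $\rho\in[1,\infty]$ exists precisely when $p\le q$. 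This is slightly more self-contained, bypassing the external Nikol'skij reference, at the cost of nothing --- both proofs need the same kernel estimate, and yours extracts from it all that Young's inequality asks for. Either route is fine.
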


\begin{proof}
Before beginning with proofs of the assertions,
we show that the kernel $L$ defined in \eqref{kernels}
satisfies the following inequality:
there exists $M \in \N$
such that
\begin{align} \label{Linq}
\big| L (x) \big|
\lesssim 
\| \theta \|_{ C^{M} }
( 1+ |x| )^{-(n+s)} .
\end{align}
Although the inequality \eqref{Linq}
is a well-known fact,
for the sake of a self-contained proof,
we revisit a proof.

The case for $|x| \le 1$ 
is simple, and so we will consider 
the case $|x| \ge 1$.
Since 
$e^{i|\xi|^s} -1 = i |\xi|^{s} \int_{0}^{1} e^{ it |\xi|^s } \,dt$,
the kernel $L$ can be expressed by
\begin{align*}&
L (x)
=
\int_{ \substack{ |\xi| \le 2 \\ 0 < t < 1 } }
e^{i x \cdot \xi}
e^{i t |\xi|^s} \,
\big( i |\xi|^{s} \big)
\theta (\xi)
\, d\xi dt
+
\int_{ |\xi| \le 2 }
e^{i x \cdot \xi}
\theta (\xi) 
\, d\xi .
\end{align*}
Integration by parts yields that 
the absolute value of the second integral 
is bounded by 
$\| \theta \|_{ C^{M} } (1+|x|)^{-M}$
for any $M \in \N_0$,
and thus,
in the following, 
we shall consider
the first integral.
Using a Littlewood--Paley partition of unity on $\R^n$,
$\{ \psi (2^{-k} \cdot) \}_{ k \in \Z }$,
since 
$\supp \theta \subset
\{ |\xi| \le 2 \}$,
we can decompose the first integral into
\begin{align*}
\sum_{ k \le 1 }
I_{k} (x)
\quad\textrm{with}\quad
I_{k} (x)
:=
\int_{ \substack{ 2^{k-1} \le |\xi| \le 2^{k+1} \\ 0 < t < 1 } }
e^{i x \cdot \xi}
e^{i t |\xi|^s} \,
\big( i |\xi|^{s} \big)
\psi (2^{-k} \xi )
\theta (\xi)
\, d\xi dt .
\end{align*}
Here,
we have
for $\alpha \in (\N_{0})^{n}$ and $k \le 1$
\[
\Big|
\partial_{\xi}^{\alpha}
\Big(
e^{i t |\xi|^s} \,
\big( i |\xi|^{s} \big)
\psi (2^{-k} \xi )
\theta (\xi) 
\Big)
\Big|
\lesssim
\| \theta \|_{ C^{|\alpha|} }
(2^{k})^{s-|\alpha|}
\ichi_{ \{ 2^{k-1} \le |\xi| \le 2^{k+1} \} } ,
\quad
0 \le t \le 1,
\]
which gives that for $M \in \N_{0}$
\begin{align*}
\big| I_{k} (x) \big| \lesssim
\| \theta \|_{ C^{M} }
\times
\begin{cases}
( 2^{k} )^{n+s} = |x|^{-(n+s)} \, ( 2^{k} |x| )^{n+s} , 
\\
|x|^{-M} ( 2^{k} )^{-M+n+s} = |x|^{-(n+s)} \, ( 2^{k} |x| )^{-M+n+s} .
\end{cases}
\end{align*}
Therefore,
by choosing $M > n+s$,
\begin{align*}
\sum_{k \le 1}
| I_{k} (x) |
&
\lesssim
\| \theta \|_{ C^{M} }
|x|^{-(n+s)}
\sum_{k \le 1}
\min \{
( 2^{k} |x| )^{n+s} ,
( 2^{k} |x| )^{-M+n+s}
\}
\approx 
\| \theta \|_{ C^{M} }
|x|^{-(n+s)} ,
\end{align*}
which completes
the proof of \eqref{Linq}.
Now, we actually prove 
the assertions \eqref{T_Lq} and \eqref{T_Linfty}.

\eqref{T_Lq}
Take a function $\widetilde{\theta} \in \calS(\R^n)$
satisfying that
$\widetilde{\theta} = 1$ on 
$\{ |\xi| \le 2 \}$
and 
$\supp \widetilde{\theta} \subset 
\{ |\xi| \le 3\}$.
Then, 
we observe that
\[
T f (x)
=
T (\widetilde{\theta}(D) f) (x)
\]
and also
from \eqref{Linq} that $L$,  
the kernel of $T$,
is in $L^1 (\R^n)$
and $\|L\|_{L^1} \lesssim \|\theta\|_{C^M}$.
Therefore, we see that
\begin{align*}
\| Tf \|_{L^q}
&
\lesssim
\| \theta \|_{ C^{M} }
\| \widetilde{\theta}(D) f \|_{L^q}
\\
&
\lesssim
\| \theta \|_{ C^{M} }
\| \widetilde{\theta}(D) f \|_{L^p}
\lesssim
\| \theta \|_{ C^{M} }
\| f \|_{L^p },
\end{align*}
where,
in the second inequality,
we used Nikol'skij's inequality (see, {\it e.g.}, 
\cite[Section 1.3.2, Remark 1]{triebel 1983}).
This completes the proof of 
the assertion \eqref{T_Lq}.

\eqref{T_Linfty}
We first observe that,
if $A \ge 2r$,  
$A \le |x| \le 2A$,
and $|y| \le r \le \frac{A}{2}$,
then
$|x-y| \approx |x| \approx A$.
By \eqref{Linq},
we have for 
$A \le |x| \le 2 A$
\begin{align*}
\big| T f(x) \big|
&
\le
\int_{|y| \le r} 
\big| L(x-y)\big| \, 
|f(y)| 
\,dy 
\lesssim
\|\theta\|_{ C^{M} }
\int_{|y| \le r} 
|x-y|^{-(n+s)}
|f(y)| 
\,dy 
\\
&
\lesssim
A^{- (n+s) }
\|\theta\|_{ C^{M} } ,
\end{align*}
which implies 
the assertion \eqref{T_Linfty}.
\end{proof}

\subsection{Inequalities for $s < 1$}

In this subsection, 
we show some inequalities 
which will be used
for proving the boundedness
for $s<1$. 

\begin{lem} \label{Sj_s<1}
Let $0< s < 1$.
Then 
there exist $c>0$ and $M \in \N$ depending only on 
$n$ and $s$ 
such that 
the following hold. 
\begin{enumerate}
\item 
\label{Sj_s<1_outside}
If $f$ is an $h^1$-atom 
supported on 
a ball 
centered at the origin in $\R^n$,  
then
\[
\big\| S_{j} f(x) \big\|_{ L^1(|x| \ge 2)}
\le c\,
\| \theta \|_{C^M} 
\]
for all $j \in \N_{0}$. 

\item
\label{Sj_s<1_inside}
If $0 < A \le 10$, then
\[
\big\| S_{j} f (x)
\big\|_{ L^{2} (|x| \le A) }
\le
c
A^{ \frac{n(1-s)}{2} } \,
\|\theta\|_{ C^{M} } \,
\|f\|_{ L^\infty (\R^n) } 
\]
for all $j \in \N_{0}$. 
\end{enumerate}

\end{lem}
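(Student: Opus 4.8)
The plan is to write $S_jf=K_j\ast f$ as in \eqref{kernels} and to prove both parts from the pointwise kernel estimates of Corollary~\ref{Kj_phi}~\eqref{Kj_phi_s<1}, namely $|K_j(x)|\lesssim\|\theta\|_{C^M}|x|^{-L}$ for $|x|\le1$, with $L=\tfrac n2+\tfrac{n}{2(1-s)}$, and $|K_j(x)|\lesssim\|\theta\|_{C^M}|x|^{-N}$ for $|x|>1$ with $N$ arbitrary, together with the $j$-uniform bound $\|S_jg\|_{L^2}\lesssim\|\theta\|_{C^M}\|g\|_{L^2}$ coming from Lemma~\ref{Sj_Lp} at $p=2$.

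For part \eqref{Sj_s<1_outside}: the atom $f$ is supported in $\{|y|\le r\}$ with $r\le1$, so $|x-y|\ge1$ whenever $|x|\ge2$ and $y$ lies in the support; hence, using the $|x|>1$ branch of the kernel estimate with $N>n$ together with Fubini,
\[
\|S_jf\|_{L^1(|x|\ge2)}\lesssim\|\theta\|_{C^M}\int_{|y|\le r}|f(y)|\Bigl(\int_{|x|\ge2}|x-y|^{-N}\,dx\Bigr)dy.
\]
The inner integral is $\lesssim\int_{|z|\ge1}|z|^{-N}\,dz\lesssim1$ uniformly for $|y|\le1$, and $\|f\|_{L^1}\le\|f\|_{L^\infty}|B(0,r)|\lesssim1$ since $f$ is an $h^1$-atom; therefore $\|S_jf\|_{L^1(|x|\ge2)}\lesssim\|\theta\|_{C^M}$. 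This part is routine.

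For part \eqref{Sj_s<1_inside}: assume first $0<A\le1$ (so $A^{1-s}\ge A$) and split $f=f_1+f_2$ with $f_1=f\ichi_{B(0,2A^{1-s})}$ and $f_2=f-f_1$. Then $\|f_1\|_{L^2}\lesssim(A^{1-s})^{n/2}\|f\|_{L^\infty}=A^{n(1-s)/2}\|f\|_{L^\infty}$, so Lemma~\ref{Sj_Lp} gives $\|S_jf_1\|_{L^2(\R^n)}\lesssim\|\theta\|_{C^M}A^{n(1-s)/2}\|f\|_{L^\infty}$ at once. For $f_2$: if $|x|\le A$ and $|y|\ge2A^{1-s}$ then $|x-y|\ge2A^{1-s}-A\ge A^{1-s}$, so the kernel estimates yield
\[
\|S_jf_2\|_{L^\infty(|x|\le A)}\lesssim\|\theta\|_{C^M}\|f\|_{L^\infty}\Bigl(\int_{A^{1-s}\le|z|\le1}|z|^{-L}\,dz+\int_{|z|>1}|z|^{-N}\,dz\Bigr).
\]
Since $n-L=-\tfrac{ns}{2(1-s)}<0$ (in particular $L>n$), the first integral is $\approx(A^{1-s})^{n-L}=A^{-ns/2}$ and the second is $O(1)$ for $N>n$; then Hölder's inequality on $\{|x|\le A\}$ costs a factor $A^{n/2}$, and $A^{n/2}\cdot A^{-ns/2}=A^{n(1-s)/2}$, which is the asserted bound. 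The range $1\le A\le10$ is handled in the same way except that one splits at the fixed radius $2A$, which forces $|x-y|\ge A\ge1$ so that only the $|z|>1$ branch is used, and $A^{n/2}\approx A^{n(1-s)/2}$ because $A$ is then bounded above and below.

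The one point that has to be gotten right is this choice of splitting radius. Because $K_j$ carries the \emph{non-integrable} singularity $|x|^{-L}$ with $L>n$ at the origin, a naive split at radius $A$ would produce too large a power of $A$; one is forced to split at $A^{1-s}$, precisely so that the local tail $\int_{|z|\gtrsim A^{1-s}}|K_j(z)|\,dz$ evaluates to $A^{-ns/2}$ and cancels the $A^{n/2}$ loss incurred by Hölder's inequality. Everything else is bookkeeping with the two regimes $A\le1$ and $1\le A\le10$.
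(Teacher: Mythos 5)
Your proof is correct and follows essentially the same approach as the paper: part (1) uses the rapid decay of $K_j$ away from the support of $f$, and part (2) splits $f$ at radius $\sim A^{1-s}$, controlling the near piece by Plancherel and the far piece by the kernel estimate of Corollary~\ref{Kj_phi}~\eqref{Kj_phi_s<1}, with the $A^{n/2}$ loss from H\"older exactly balanced by the $A^{-ns/2}$ tail integral. The paper handles all $0<A\le10$ in one stroke by taking the splitting radius $CA^{1-s}$ with $C=2\cdot10^{s}$, whereas you split into $A\le1$ and $1\le A\le10$; this is an inessential difference.
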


\begin{proof}
\eqref{Sj_s<1_outside}
If $|x| \ge 2$ and $|y| \le r \le 1$, then $|x-y| \ge \frac{|x|}{2} \ge 1$.
Hence, 
by Corollary \ref{Kj_phi} \eqref{Kj_phi_s<1}, 
we obtain for $N>n$
\begin{align*}
\|S_jf(x)\|_{L^1(|x| \ge 2)}
&
\lesssim
\|\theta\|_{C^M}
\int_{|x| \ge 2}
\int_{|y| \le r}
|x-y|^{-N}
|f(y)|
\,
dy
dx
\\
&
\lesssim
\|\theta\|_{C^M}
\Big(
\int_{|x| \ge 2}
|x|^{-N}
\,
dx
\Big)
\int_{|y| \le r}
|f(y)|
dy
\lesssim
\|\theta\|_{C^M},
\end{align*}
which completes the proof of 
the assertion \eqref{Sj_s<1_outside}.
(We don't need the moment condition $\int f = 0$ here.)

\eqref{Sj_s<1_inside}
We decompose $f$ by
\begin{align*}
f
=
f \ichi_{ \{ |y| \le C A^{1-s} \} }
+
f \ichi_{ \{ |y| > C A^{1-s} \} }
=:
f_{A}^{1} + f_{A}^{2},
\end{align*}
where $C = 2 \cdot 10^s$.
For the estimate with respect to $f_{A}^{1}$,
we see from Plancherel's theorem that
\begin{align*}
\| S_{j} f_{A}^{1} \|_{ L^2 (|x|\le A)}
\le 
\| S_{j} f_{A}^{1} \|_{ L^2 (\R^n)}
\le 
\| \theta \|_{ C^{0} }
\| f_{A}^{1} \|_{ L^2 (\R^n)}
\lesssim
\| \theta \|_{ C^{0} } \,
A^{ \frac{n(1-s)}{2} } \,
\| f \|_{ L^{\infty} (\R^n)} .
\end{align*}
We next consider the estimate with respect to $f_{A}^{2}$.
In the situation here,
since $A \le 10^{s} A^{1-s}$
for $0< A \le 10$ and $0< s <1$,
we realize that,
if $|x| \le A$ and $|y| \ge C A^{1-s}$,
then
\[
|x-y| 
\ge 
\Big(1- \frac{10^s}{C} \Big)|y| 
=
\frac{|y|}{2}.
\]
Hence,
by Corollary \ref{Kj_phi} \eqref{Kj_phi_s<1},
\begin{align*}
&
\| S_{j} f_{A}^{2} \|_{ L^2 (|x|\le A)}
\lesssim
A^{ \frac{n}{2} }
\Big\|
\int_{|y| > C A^{1-s}} 
\big| K_{j}(x-y) \big| \, 
|f(y)|
\,dy 
\Big\|_{ L^{\infty}_{x} (|x|\le A)}
\\
&
\lesssim
A^{ \frac{n}{2} } \,
\|\theta\|_{ C^{M} }
\|f\|_{ L^{\infty} (\R^n)}
\int_{|y| > C A^{1-s}} 
|y|^{ -\frac{n}{2} - \frac{n}{2(1-s)} } \, 
\,dy 
\approx
A^{ \frac{n(1-s)}{2} } \,
\|\theta\|_{ C^{M} }
\| f \|_{ L^{\infty} (\R^n)}
,
\end{align*}
where,
in the last inequality,
we used $ -\frac{n}{2} - \frac{n}{2(1-s)} < -n$.
This completes the proof.
\end{proof}

\subsection{Inequalities for $s > 1$}

In this subsection, 
we show some inequalities 
which will be used
for proving the boundedness
in the case $s>1$. 

\begin{lem} \label{Sj_s>1}
Let $1 < s < \infty$. 
If $j \in \N_{0}$ and $A \ge 2^{j(s-1)}$, then 
\[
\big\| S_{j} f (x)
\big\|_{ L^{2} (|x| \le A) }
\le
c
A^{ \frac{n}{2} } \,
\|\theta\|_{ C^{M} } \,
\|f\|_{ L^\infty (\R^n) }, 
\]
where the constants 
$c>0$ and $M\in \N$ depend only on 
$n$ and $s$.
\end{lem}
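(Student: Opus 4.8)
The plan is to follow the scheme of the proof of Lemma~\ref{Sj_s<1}~\eqref{Sj_s<1_inside}, with two modifications: the splitting of $f$ is performed at a radius proportional to $A$ (rather than to a power of $A$), and the kernel decay of Corollary~\ref{Kj_phi}~\eqref{Kj_phi_s<1} is replaced by that of Corollary~\ref{Kj_phi}~\eqref{Kj_phi_s>1}. Recall from \eqref{kernels} that $S_j f = K_j \ast f$ with $K_j = (e^{i|\xi|^s}\zeta(\xi)\theta(2^{-j}\xi))^{\vee}$, and that, since the multiplier symbol $e^{i|\xi|^s}\zeta(\xi)\theta(2^{-j}\xi)$ is bounded in absolute value by $\|\theta\|_{C^0}$, Plancherel's theorem gives $\|S_j g\|_{L^2(\R^n)} \le \|\theta\|_{C^0}\|g\|_{L^2(\R^n)}$ for all $g \in L^2(\R^n)$. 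I would fix $N > n$, apply Corollary~\ref{Kj_phi}~\eqref{Kj_phi_s>1} with this $N$, let $M$ be the resulting exponent, choose $C = 2s8^{s-1}+1$, and write $f = f_A^1 + f_A^2$ with $f_A^1 = f\ichi_{\{|y| \le CA\}}$ and $f_A^2 = f\ichi_{\{|y| > CA\}}$.

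The local piece is handled purely by $L^2$ boundedness: since $f_A^1$ is supported in a ball of radius $CA$,
\[
\|S_j f_A^1\|_{L^2(|x|\le A)} \le \|S_j f_A^1\|_{L^2(\R^n)} \le \|\theta\|_{C^0}\|f_A^1\|_{L^2(\R^n)} \lesssim \|\theta\|_{C^0}\,A^{n/2}\,\|f\|_{L^\infty(\R^n)} .
\]
Note that the kernel structure is not used here — in particular, the possibly growing factor $(1+|x|)^{-n/2+n/(2(s-1))}$ appearing in Corollary~\ref{Kj_phi}~\eqref{Kj_phi_s>1} when $1<s<2$ never enters.

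For the far piece I would exploit that $K_j$ is evaluated only in its rapid-decay regime. If $|x| \le A$ and $|y| > CA$, then $|x-y| \ge |y| - A \ge (1-C^{-1})|y| \ge |y|/2$, and moreover $|x-y| \ge |y|/2 > (C/2)A \ge s8^{s-1}A \ge s8^{s-1}2^{j(s-1)}$, where the last step is exactly where the hypothesis $A \ge 2^{j(s-1)}$ enters. Hence $|K_j(x-y)| \lesssim \|\theta\|_{C^M}|x-y|^{-N} \lesssim \|\theta\|_{C^M}|y|^{-N}$, so for $|x| \le A$,
\[
|S_j f_A^2(x)| \le \int_{|y|>CA}|K_j(x-y)|\,|f(y)|\,dy \lesssim \|\theta\|_{C^M}\|f\|_{L^\infty(\R^n)}\int_{|y|>CA}|y|^{-N}\,dy \approx \|\theta\|_{C^M}\|f\|_{L^\infty(\R^n)}\,A^{n-N}.
\]
Since $A \ge 2^{j(s-1)} \ge 1$ and $N > n$, we have $A^{n-N} \le 1$, so integrating this pointwise bound over $\{|x| \le A\}$ gives $\|S_j f_A^2\|_{L^2(|x|\le A)} \lesssim A^{n/2}\|\theta\|_{C^M}\|f\|_{L^\infty(\R^n)}$. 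Adding the two estimates yields the lemma.

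The only point requiring care is the chain of inequalities in the last paragraph: the splitting radius $CA$ must be taken large enough (with $C$ depending on $s$ through the constant $s8^{s-1}$ from Corollary~\ref{Kj_phi}~\eqref{Kj_phi_s>1}) that every difference $x-y$ with $|x| \le A$, $|y| > CA$ lands beyond the threshold $s8^{s-1}2^{j(s-1)}$ — this is precisely what forces the use of $A \ge 2^{j(s-1)}$ — after which the remaining negative power $A^{n-N}$ is absorbed by $A \ge 1$. Everything else is routine.
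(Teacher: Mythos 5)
Your proof is correct and follows essentially the same approach as the paper: the same split of $f$ at radius $CA$ (your $C = 2s8^{s-1}+1$ versus the paper's $C=2s8^{s-1}$, both $\ge 2$ and both large enough), Plancherel for the near piece, and Corollary~\ref{Kj_phi}~\eqref{Kj_phi_s>1} in the rapid-decay regime for the far piece, using $A \ge 2^{j(s-1)} \ge 1$ to land past the threshold $s8^{s-1}2^{j(s-1)}$ and to absorb the leftover factor $A^{n-N}$. The only cosmetic difference is that the paper first passes from $L^2(|x|\le A)$ to $A^{n/2}\,L^\infty(|x|\le A)$ and then bounds the $L^\infty$ norm, whereas you establish the pointwise bound first and then integrate; these are the same computation.
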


\begin{proof}
We decompose $f$ as follows:
\begin{align*}
f
=
f \ichi_{ \{ |y| \le C A \} }
+
f \ichi_{ \{ |y| > C A \} }
=:
f_{A}^{1} + f_{A}^{2} ,
\end{align*}
where $C = 2s 8^{s-1}$.
For the estimate involved in $f_{A}^{1}$,
we have by Plancherel's theorem
\begin{align*}
\| S_{j} f_{A}^{1} \|_{ L^2 (|x|\le A)}
\le 
\| S_{j} f_{A}^{1} \|_{ L^2 (\R^n)}
\le 
\| \theta \|_{ C^{0} }
\| f_{A}^{1} \|_{ L^2 (\R^n)}
\lesssim
\| \theta \|_{ C^{0} } \,
A^{ \frac{n}{2} }\,
\| f \|_{ L^{\infty} (\R^n)} ,
\end{align*}
Next, we consider
the estimate involved in $f_{A}^{2}$.
Observe that,
for $|x| \le A$ and $|y| \ge C A$,
\[
|x-y| 
\ge 
\Big(1- \frac{1}{C} \Big)|y| 
\ge
\frac{|y|}{2}
\ge
\frac{CA}{2}
\ge 
s 8^{s-1} 2^{j(s-1)} 
\]
holds,
which is possible from the choice 
of the constant $C \ge 2$.
Hence, by utilizing
Corollary \ref{Kj_phi} \eqref{Kj_phi_s>1},
we have for large $N>n$
\begin{align*}
&
\| S_{j} f_{A}^{2} \|_{ L^2 (|x|\le A)}
\lesssim
A^{ \frac{n}{2} }
\Big\|
\int_{|y| > C A} 
\big| K_{j}(x-y) \big| \, 
|f(y)|
\,dy 
\Big\|_{ L^{\infty}_{x} (|x|\le A)}
\\
&
\lesssim
A^{ \frac{n}{2} } \,
\|\theta\|_{ C^{M} }
\| f \|_{ L^{\infty} (\R^n)}
\int_{|y| > C A } 
|y|^{ -N } \, 
\,dy 
\lesssim
A^{ \frac{n}{2} } \,
\|\theta\|_{ C^{M} }
\| f \|_{ L^{\infty} (\R^n)} .
\end{align*}
Combining the above estimates,
we complete the proof of this lemma.
\end{proof}

\section{Boundedness in $H^1 \times L^\infty \to L^1$}
\label{Bdd_H1LinftyL1}

In this section, we shall give a proof of Theorem \ref{thm_main}. 
To this end, we will prove the following theorem.

\begin{thm} \label{thm_h1LinftyL1}
Let $0 < s < 1$ or $1 < s < \infty$.
Suppose that
$\sigma \in S_{1,0}^{m} (\R^{2n})$
with
\[
m = 
\begin{cases}
 -\frac{sn}{2} - \frac{s(1-s)n}{2} ,
	& \textrm{if} \;\; 0 < s < 1,
\\
-\frac{sn}{2} ,
	& \textrm{if} \;\; 1 < s < \infty.
\end{cases}
\]
Then
$T_{\sigma}^{s}$ is bounded from $h^{1} \times L^{\infty}$ to $L^{1}$. 
\end{thm}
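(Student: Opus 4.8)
The plan is to reduce to a single atom and then decompose the symbol dyadically, after which everything is governed by the kernel estimates of Sections~\ref{FourierTrans} and~\ref{lemmas}. By the atomic decomposition of $h^1$ recalled in Notation~\ref{notation} and the translation invariance of $T_\sigma^s$, it suffices to prove $\|T_\sigma^s(a,g)\|_{L^1}\le C\|g\|_{L^\infty}$ uniformly over $g\in L^\infty$ and over $h^1$-atoms $a$ supported on a ball $B(0,r)$ with $r\le 1$; normalize $\|g\|_{L^\infty}=1$. Write $\sigma=\sum_{j\ge 0}\sigma^{(j)}$, where $\sigma^{(j)}$ is $\sigma$ times a smooth Littlewood--Paley cutoff in the joint variable $(\xi,\eta)$, so that $\supp\sigma^{(j)}\subset\{|\xi|,|\eta|\le 2^{j+2}\}$ with $\max(|\xi|,|\eta|)\ge 2^{j-1}$ there, and $|\partial_\xi^\alpha\partial_\eta^\beta\sigma^{(j)}(\xi,\eta)|\lesssim 2^{j(m-|\alpha|-|\beta|)}$. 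Expanding the rescaled symbol $2^{-jm}\sigma^{(j)}(2^j\cdot,2^j\cdot)$ in a Fourier series on a fixed cube---the standard linearization of Coifman--Meyer type symbols---gives
\[
T_{\sigma^{(j)}}^s(f,g)(x)
=c\sum_{\nu,\mu\in\Z^n}d^{\,j}_{\nu,\mu}\,2^{jm}\,
\big(\widetilde S_j f\big)\big(x+2^{-j}\nu\big)\,
\big(\widetilde S_j g\big)\big(x+2^{-j}\mu\big),
\]
where $|d^{\,j}_{\nu,\mu}|\le C_N(1+|\nu|+|\mu|)^{-N}$ uniformly in $j$ and $\widetilde S_j=e^{i|D|^s}\phi(2^{-j}D)$ for a fixed bump $\phi$. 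One has $\widetilde S_j=T+S_j$, where $T=e^{i|D|^s}\varphi(D)$ is the $j$-independent low-frequency operator of Section~\ref{lemmas} and $S_j=e^{i|D|^s}\zeta(D)\phi(2^{-j}D)$ is of the form treated there; since only the difference $2^{-j}(\nu-\mu)$ of the two translations matters, and it costs at most a polynomial factor in $|\nu|+|\mu|$ absorbed by $d^{\,j}_{\nu,\mu}$, the matter reduces to
\[
\sum_{j\ge 0}2^{jm}\Big(\|Ta\cdot Tg\|_{L^1}+\|S_ja\cdot Tg\|_{L^1}+\|Ta\cdot S_jg\|_{L^1}+\|S_ja\cdot S_jg\|_{L^1}\Big)\lesssim 1.
\]

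For the first three products I would decompose $\R^n$ into dyadic shells and use Cauchy--Schwarz on each. The term $Ta\cdot Tg$ is immediate: $\|Ta\cdot Tg\|_{L^1}\le\|Ta\|_{L^1}\|Tg\|_{L^\infty}\lesssim 1$ since the kernel of $T$ is in $L^1$ (Lemma~\ref{T}\,\eqref{T_Lq} and~\eqref{Linq}), and $\sum_j 2^{jm}<\infty$. For $S_ja\cdot Tg$ one has $\|S_ja\cdot Tg\|_{L^1}\le\|S_ja\|_{L^1}\lesssim 2^{jsn/2}$ by Lemma~\ref{Sj_Lp}; for $0<s<1$ this already sums against $2^{jm}$ because $m+sn/2<0$, while for $s>1$, where $m+sn/2=0$, one gains an extra $2^{-j\varepsilon}$ by combining the $L^2$-smallness of $S_ja$ on small balls from Lemma~\ref{Sj_L2}\,\eqref{Sj_L2_Rn} (this uses the moment condition on $a$ when $r<1$) with the facts that $K_j$ is then supported in $|x|\lesssim 2^{j(s-1)}$ (Corollary~\ref{Kj_phi}\,\eqref{Kj_phi_s>1}) and Lemma~\ref{Sj_s<1}\,\eqref{Sj_s<1_outside}. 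For $Ta\cdot S_jg$ the key point is that $Ta$ is \emph{small} in $L^2$: $\|Ta\|_{L^2}\lesssim r$ for atoms of the first kind (this is the moment condition, equivalently Lemma~\ref{Sj_L2}\,\eqref{Sj_L2_Rn} with $\theta=\varphi$) and $\|Ta\|_{L^2}\lesssim 1$ for atoms of the second kind, while $S_jg$ on a ball satisfies $\|S_jg\|_{L^2(|x|\le A)}\lesssim A^{n(1-s)/2}$ for $A\le 10$ when $0<s<1$ (Lemma~\ref{Sj_s<1}\,\eqref{Sj_s<1_inside}) and $\|S_jg\|_{L^2(|x|\le A)}\lesssim A^{n/2}$ for $A\ge 2^{j(s-1)}$ when $s>1$ (Lemma~\ref{Sj_s>1}); together with the decay of the kernel of $T$ (Lemma~\ref{T}\,\eqref{T_Linfty}) and of $K_j$ (Corollary~\ref{Kj_phi}) on the complementary shells, this produces a bound summable against $2^{jm}$.

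The main term is $S_ja\cdot S_jg$. Here I would bound it by $\|S_ja\|_{L^2(|x|\le\rho)}\,\|S_jg\|_{L^2(|x|\le\rho)}$ on the ball carrying the bulk of $S_ja$---$\rho\sim\max(r,2^{-j(1-s)})$ when $0<s<1$ and $\rho\sim\max(r,2^{j(s-1)})$ when $s>1$---plus a sum over dyadic shells $A\le|x|\le 2A$, $A>\rho$, on which one uses the off-center decay of $S_ja$ from Lemma~\ref{Sj_L2}\,\eqref{Sj_L2_torus} against $\|S_jg\|_{L^2}$ from Lemma~\ref{Sj_s<1}\,\eqref{Sj_s<1_inside} (resp.\ Lemma~\ref{Sj_s>1}) when $A$ is bounded and against $\|S_jg\|_{L^\infty}\lesssim 2^{jsn/2}$ when $A$ is large. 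Choosing the interpolation parameter $t$ in Lemma~\ref{Sj_L2} appropriately and summing first in $A$ and then in $j$, one finds that the geometric series close \emph{exactly} at $m=-sn/2-s(1-s)n/2$ for $0<s<1$ and at $m=-sn/2$ for $s>1$; the summand $-s(1-s)n/2$ in the first case is precisely what compensates the loss $A^{n(1-s)/2}$ on the $L^\infty$-factor, a loss that can be exploited only over the range $A\lesssim 1$ where the kernels $K_j$ are concentrated.

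The step I expect to be the real obstacle is the bookkeeping in this last estimate: one must track simultaneously the relative sizes of $r$, $1$, $2^j$ and $2^{\pm j(1-s)}$, choose $t$ and the break points of the spatial decomposition accordingly, and check that the powers of $r$ cancel---in particular that the $r^{-n/2}$ coming from $\|a\|_{L^2}$ is killed by the $r^{n/2}$ produced by summing a geometric series in $j$ that effectively starts near $j\sim\log_2(1/r)$. This is where the moment condition is essential for atoms of the first kind, since without it $\|Ta\|_{L^2}$ and $\|S_ja\|_{L^2}$ are only $O(r^{-n/2})$ and the relevant sums diverge (logarithmically). Once Theorem~\ref{thm_h1LinftyL1} is proved, Theorem~\ref{thm_main} follows from it, from the symmetric statement obtained by interchanging the two variables, and from Theorem~\ref{thm_BRRS}, by complex interpolation of the symbol classes $S^{m}_{1,0}$.
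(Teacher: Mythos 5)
Your proposal follows essentially the same strategy as the paper: reduce to a single $h^1$-atom centered at the origin via translation invariance, expand the dyadically localized symbol in a Fourier series to arrive at sums of products $\theta_1(2^{-j}\xi)\theta_2(2^{-j}\eta)$ with $\theta_i$ supported in a fixed ball, split each oscillatory factor as $T+S_j$, and estimate the four resulting products $\|U_j a\cdot V_j g\|_{L^1}$ ($U,V\in\{T,S\}$) by Cauchy--Schwarz over a central ball plus dyadic shells, using Lemma~\ref{Sj_Lp}, Lemma~\ref{Sj_L2}, Lemma~\ref{T}, Lemma~\ref{Sj_s<1} or Lemma~\ref{Sj_s>1}, with the moment condition of atoms of the first kind supplying the $\min\{(2^jr)^t,(2^jr)^{-nt/2}\}$ gain and the interpolation parameter $t$ chosen to close the geometric series. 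Your joint Littlewood--Paley cutoff (instead of the paper's Coifman--Meyer decomposition $\sigma_0+\sigma_{\RomI}+\sigma_{\II}$ before the Fourier expansion) and your handling of the translation factors by translation-invariance of the $L^1$-norm (instead of absorbing the exponentials $e^{i\nu\cdot\xi}$ into $\theta_i$) are harmless variations, since both reduce to the same separated form. One small imprecision: in discussing $S_ja\cdot Tg$ for $s>1$ you cite Lemma~\ref{Sj_s<1}\,\eqref{Sj_s<1_outside}, which is specific to $s<1$; the paper's treatment of that term for $s>1$ instead combines Lemma~\ref{Sj_L2} with Lemma~\ref{T}\,\eqref{T_Lq}--\eqref{T_Linfty} and the spatial localization from Corollary~\ref{Kj_phi}\,\eqref{Kj_phi_s>1}, and your invocation of the moment condition for the symmetric $Ta\cdot S_jg$ term, while true, is not actually needed in the paper's argument.
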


We notice that 
Theorem \ref{thm_main} can be derived from 
Theorems \ref{thm_BRRS} and \ref{thm_h1LinftyL1}
by virtue of complex interpolation.
Thus, it suffices to show Theorem \ref{thm_h1LinftyL1}.

Now, we begin with the proof of Theorem \ref{thm_h1LinftyL1}.
We decompose the multiplier $\sigma$ following 
the idea of Coifman-Meyer \cite{CM-Ast, CM-AIF}.
We write
\begin{align*} &
\sigma (\xi, \eta) 
=
\sum_{j=0}^{\infty} 
\sum_{k=0}^{\infty} 
\sigma (\xi, \eta) \psi_j (\xi) \psi_k (\eta)
\\&=
\sigma (\xi, \eta) \varphi (\xi) \varphi (\eta)
+
\sum_{j=1}^{\infty} 
\sum_{k=0}^{j} 
\sigma (\xi, \eta) \psi_j (\xi) \psi_k (\eta)
+
\sum_{k=1}^{\infty} 
\sum_{j=0}^{k-1} 
\sigma (\xi, \eta) \psi_j (\xi) \psi_k (\eta)
\\&=
\sigma (\xi, \eta) \varphi (\xi) \varphi (\eta)
+
\sum_{j \in \N} 
\sigma (\xi, \eta) \psi_{j} (\xi) \varphi_{j} (\eta)
+
\sum_{k \in \N} 
\sigma (\xi, \eta) \varphi_{k-1} (\xi) \psi_k (\eta)
\\&=
\sigma_{0} (\xi, \eta)
+
\sigma_{\RomI} (\xi, \eta)
+
\sigma_{\II} (\xi, \eta) .
\end{align*}

We first consider the multiplier $\sigma_{\RomI}$. 
Taking functions 
$\widetilde{\psi}, \widetilde{\varphi} \in C_{0}^{\infty}(\R^n)$ 
such that 
\[\begin{array}{ll}
\widetilde{\psi} = 1 
\;\;\text{on}\;\; \{ 2^{-1}\le |\xi| \le 2 \}, \quad
&
\supp \widetilde{\psi} 
\subset \{3^{-1}\le |\xi| \le 3\}, 
\vspace{3pt}\\
\widetilde{\varphi} = 1 
\;\;\text{on}\;\; \{ |\xi| \le 2 \}, \quad
&
\supp \widetilde{\varphi} 
\subset \{|\xi| \le 3\} ,
\end{array}\]
we can write $\sigma_{\RomI}$ as
\[
\sigma_{\RomI}(\xi, \eta)
=
\sum_{j \in \N }
\sigma (\xi, \eta) 
\widetilde{\psi}(2^{-j}\xi) 
\widetilde{\varphi}(2^{-j}\eta) \,
\psi_{j} (\xi) 
\varphi_{j} (\eta),
\]
since 
$\widetilde{\psi}( 2^{-j} \xi ) 
\widetilde{\varphi}( 2^{-j} \eta)$
equals 1
on the support of
$\psi_{j} (\xi) \varphi_{j} (\eta)$.
Since $\sigma \in S_{1,0}^{m}(\R^{2n})$ and
\[
\supp \sigma (2^{j} \xi, 2^{j}\eta)\widetilde{\psi}(\xi) 
\widetilde{\varphi}(\eta)
\subset 
\{3^{-1}\le |\xi|\le 3\}\times 
\{|\eta|\le 3\},
\]
the following estimate holds:
\begin{equation*}
\left|
\partial^{\alpha}_{\xi} 
\partial^{\beta}_{\eta} 
\big(
\sigma (2^{j} \xi, 2^{j}\eta)\widetilde{\psi}(\xi) 
\widetilde{\varphi}(\eta)
\big) 
\right| 
\le 
C_{\alpha, \beta}\,  2^{jm}
\end{equation*}
with $C_{\alpha, \beta}$ independent of $j\in \N_0$. 
Hence, by the Fourier series expansion
with respect to the variables $\xi$ and $\eta$,
we can write 
\[
\sigma (2^{j} \xi, 2^{j}\eta)
\widetilde{\psi}(\xi) 
\widetilde{\varphi}(\eta)
=
\sum_{a, b \in \Z^n} 
c_{\RomI, j}^{(a, b)}
e^{i a \cdot \xi} e^{i b \cdot \eta}, 
\quad 
|\xi|<\pi, \;\; |\eta|<\pi,  
\]
with the coefficient satisfying that
for $L>0$
\begin{equation} \label{cjab-decay}
\big| c_{\RomI, j}^{(a, b)} \big| 
\lesssim 2^{jm} 
(1+|a|)^{-L} (1+|b|)^{-L}.
\end{equation}
Changing variables 
$\xi \to 2^{-j}\xi$ and $\eta \to 2^{-j}\eta$
and 
multiplying $\psi_{j} (\xi) \varphi_{j} (\eta)$, 
we obtain 
\[
\sigma (\xi, \eta)
\psi_{j} (\xi) 
\varphi_{j} (\eta)
=
\sum_{a, b \in \Z^n} 
c_{\RomI, j}^{(a, b)} \,
e^{i a \cdot 2^{-j} \xi} 
e^{i b \cdot 2^{-j}\eta}
\psi_{j} (\xi)
\varphi_{j} (\eta).  
\]
Hence, by the definitions of
$\psi_{j}$ and $\varphi_{j}$
in Notation \ref{notation}, 
the multiplier $\sigma_{\RomI}$ is written as
\begin{align*}
\sigma_{\RomI} (\xi, \eta)
&=
\sum_{a, b \in \Z^n}
\sum_{j \in \N }
c_{\RomI, j}^{(a, b)} \,
e^{i a \cdot 2^{-j} \xi} e^{i b \cdot 2^{-j}\eta}
\psi (2^{-j} \xi) \,
\varphi (2^{-j} \eta) 
\\
&
=
\sum_{a, b \in \Z^n}
\sum_{j \in \N }
c_{\RomI, j}^{(a, b)} \,
\psi^{(a)} (2^{-j} \xi) \,
\varphi^{(b)} (2^{-j} \eta) ,
\end{align*}
where we wrote as
\begin{equation*}
\psi^{(\nu)} (\xi)
= 
e^{i \nu \cdot \xi} 
\psi (\xi), 
\quad  
\varphi^{(\nu)} (\eta) 
= 
e^{i \nu \cdot \eta} 
\varphi (\eta),
\quad
\nu \in \Z^n .
\end{equation*}

By similar arguments, 
the multipliers
$\sigma_{0}$ and
$\sigma_{\II}$
can be written as
\begin{align*}
\sigma_{0} (\xi, \eta)
&
=
\sum_{a, b \in \Z^n}
c_{0}^{(a, b)} \,
\varphi^{(a)} (\xi) \,
\varphi^{(b)} (\eta) ,
\\
\sigma_{\II} (\xi, \eta)
&
=
\sum_{a, b \in \Z^n}
\sum_{j \in \N }
c_{\II, j}^{(a, b)} \,
\varphi^{(a)} (2^{-(j-1)} \xi) \,
\psi^{(b)} (2^{-j} \eta) ,
\end{align*}
where 
the coefficient
$c_{0}^{(a, b)}$ 
satisfies the same condition as in \eqref{cjab-decay} with $j=0$,
and 
the coefficient 
$c_{\II, j}^{(a, b)}$ 
satisfies the same condition as in \eqref{cjab-decay}.

Hereafter we shall consider 
slightly general multipliers 
$\widetilde{\sigma}_0$ and $\widetilde{\sigma}$
defined by 
\begin{align}&\label{assumption-sigma_0}
\widetilde{\sigma}_0 (\xi, \eta) 
=
c_{0} \,
\theta_{1} (\xi) \,
\theta_{2} (\eta) ,
\\
&\label{assumption-sigma}
\widetilde{\sigma} (\xi, \eta) 
=
\sum_{j \in \N}
c_{j} \,
\theta_{1} (2^{-j} \xi) \,
\theta_{2} (2^{-j} \eta), 
\end{align}
where 
$( c_j )_{j\in \N_0}$ is 
a sequence of complex numbers satisfying 
\begin{equation}\label{assumption-cj}
\left| c_{j} \right| 
\le 2^{jm} A, 
\quad 
j\in \N_0, 
\end{equation}
with some $A\in (0, \infty)$,  
and 
$\theta_1, \theta_2 \in \calS(\R^n)$ 
satisfy that 
\begin{equation}\label{assumption-theta}
\supp \theta_1 , \, \supp \theta_2 
\subset 
\{|\xi|\le 2 \} .
\end{equation}
For such $\widetilde{\sigma}_0$ and $\widetilde{\sigma}$, 
we shall prove that
there exist 
$c > 0$ and $M \in \N$
such that
\begin{equation}\label{Goal-estimate}
\| T^{s}_{ \widetilde{\sigma}_0 }
\|_{h^1 \times L^{\infty}\to L^1}, 
\;
\| T^{s}_{ \widetilde{\sigma} }
\|_{h^1 \times L^{\infty}\to L^1}
\le 
c A \|\theta_1\|_{ C^{M} } 
\|\theta_2\|_{ C^{M} }  .
\end{equation}

If this is proved, by applying
\eqref{Goal-estimate} for $\widetilde{\sigma}$ to 
$c_j = c_{\RomI, j}^{(a, b)}$,  
$\theta_{1} = \psi^{(a)}$, and 
$\theta_{2} = \varphi^{(b)}$,
we have 
\begin{align*}
\| T^{s}_{ {\sigma}_{\RomI}}\|_{ h^1 \times L^{\infty}\to L^1 }
&
\lesssim 
\sum_{a, b \in \Z^n}
(1+|a|)^{-L} (1+|b|)^{-L} 
\|e^{i a \cdot \xi} \psi (\xi)\|_{C^{M}_{\xi}} 
\|e^{i b \cdot \eta} \varphi (\eta)\|_{C^{M}_{\eta}} 
\\
&
\lesssim 
\sum_{a, b \in \Z^n}
(1+|a|)^{-L+M}
(1+|b|)^{-L+M}, 
\end{align*}
and, thus, taking $L$ sufficiently large,
we see that 
$T^{s}_{ {\sigma}_{\RomI} }$ is 
bounded from $h^1 \times L^{\infty}$ to $L^1$.
In the same way, 
we see that 
$T^{s}_{ {\sigma}_{0} }$ and $T^{s}_{ {\sigma}_{\II} }$
are bounded from $h^1 \times L^{\infty}$ to $L^1$.
The above three estimates complete 
the proof of the $h^1 \times L^{\infty} \to L^1$ boundedness 
of $T_{\sigma}^{s}$.

Thus the proof is reduced to showing \eqref{Goal-estimate} 
for $\widetilde{\sigma}_0$ and $\widetilde{\sigma}$ 
given by \eqref{assumption-sigma_0}--\eqref{assumption-theta}.
However, the estimate for $\widetilde{\sigma}_0$ is simple.
In fact, from 
Lemma \ref{T} \eqref{T_Lq},
\begin{align*}&
\| T^{s}_{ \widetilde{\sigma}_0 } (f,g) \|_{ L^1}
= 
\Big\| c_{0} \,
e^{i|D|^{s}} \theta_{1}(D) f (x) \,
e^{i|D|^{s}} \theta_{2}(D) g (x) 
\Big\|_{ L^1 }
\\
&
\le
A
\big\| 
e^{i|D|^{s}} \theta_{1}(D) f \big\|_{ L^1}
\big\|
e^{i|D|^{s}} \theta_{2}(D) g
\big\|_{ L^{\infty} }
\lesssim
A \|\theta_{1}\|_{ C^{M} }
\|\theta_{2}\|_{ C^{M} }
\|f\|_{ L^1 } \|g\|_{ L^{\infty} } .
\end{align*}
Hence, 
in what follows,
we concentrate on proving
\eqref{Goal-estimate} 
for $\widetilde{\sigma}$ 
given by \eqref{assumption-sigma}--\eqref{assumption-theta}.

We shall make further reductions. 
Using $\varphi$ and $\zeta$ 
defined in Notation \ref{notation}, 
we decompose the function $1$ on $\R^n \times \R^n$
into
\begin{align*}
&
1
=
\varphi (\xi) 
\varphi (\eta)  
+ 
\varphi (\xi)
\zeta (\eta) 
+ 
\zeta (\xi)
\varphi (\eta)
+
\zeta (\xi) 
\zeta (\eta) .  
\end{align*}
Then,
$T^{s}_{ \widetilde{\sigma} }$
can be expressed by the following four parts:
\begin{align} \label{redTsigma}
\begin{split}
T^{s}_{ \widetilde{\sigma} }(f,g) (x)
=
\sum_{ j \in \N }
c_{j} \,
\Big\{
&
T_{j}^{1} f (x) \,
T_{j}^{2} g (x)
+
T_{j}^{1} f (x) \,
S_{j}^{2} g (x)
\\
&
+
S_{j}^{1} f (x) \,
T_{j}^{2} g (x)
+
S_{j}^{1} f (x) \,
S_{j}^{2} g (x)
\Big\}
,
\end{split}
\end{align}
where,
for $\ell = 1,2$,
we wrote 
\begin{align*}
S_{j}^{\ell} f (x) 
&
=
\Big( 
e^{ i |\xi|^{s} } \, 
\zeta(\xi) \theta_{\ell} (2^{-j} \xi ) \,
\widehat{f} (\xi) 
\Big)^{\vee} (x)
,
\\
T_{j}^{\ell} f (x)
&
=
\Big( 
e^{ i |\xi|^{s} } \, 
\varphi(\xi) \theta_{\ell} (2^{-j} \xi ) \,
\widehat{f} (\xi) 
\Big)^{\vee} (x) .
\end{align*}
Considering
the $L^1$-norm of \eqref{redTsigma}
and using the assumption \eqref{assumption-cj},
we see that
\begin{align*} 
\big\|
T^{s}_{ \widetilde{\sigma} }(f,g)
\big\|_{ L^1 }
\le
A
\sum_{U,V \in \{S,T\}}
\sum_{ j \in \N }
2^{jm}
&
\big\| U_{j}^{1} f \,
V_{j}^{2} g \big\|_{L^1} ,
\end{align*}
and thus,
in the following argument,
it is sufficient to prove that
\[
\sum_{ j \in \N }
2^{jm} 
\big\| U_{j}^{1} f \,
V_{j}^{2} g \big\|_{L^1}
\lesssim
\|\theta_{1}\|_{ C^{M} }
\|\theta_{2}\|_{ C^{M} }
\|f\|_{ h^1 } \|g\|_{ L^{\infty} },
\quad
U,V \in \{S, T\} .
\]
To prove this, 
by virtue of the atomic decomposition of $h^1$,
stated in Notation \ref{notation},
and by translation invariance, 
it suffices to obtain the uniform estimates 
for $h^1$-atoms $f$
supported on balls
centered at the origin.
Furthermore, 
we may assume that $\|g\|_{ L^\infty } = 1$.
Therefore,
in order to obtain the desired boundedness result 
in Theorem \ref{thm_h1LinftyL1},
we shall prove that
\begin{equation} \label{goal}
\sum_{ j \in \N }
2^{jm} 
\big\| U_{j}^{1} f \,
V_{j}^{2} g \big\|_{L^1}
\lesssim 
\|\theta_1\|_{C^{M}}
\|\theta_2\|_{C^{M}} ,
\quad
U,V \in \{S, T\} ,
\end{equation}
holds for such
$f$ and $g$
and for some $M \in \N$.
In the rest of this section, the letter $r$ always denotes
the radius of a ball including the support of $f$;
$f$ is assumed to be an $h^1$-atom supported 
on a ball in $\R^n$ of radius $r$ centered at the origin. 

\subsection{Case $0<s<1$}
\label{0<s<1}

We recall that the critical order for $0< s < 1$ is 
\begin{align*}
m= -\frac{sn}{2} - \frac{s(1-s)n}{2},
\end{align*}
and notice that $m < - \frac{sn}{2}$.
We also remark that this $m$ can be written as 
\begin{equation*}
m= - \frac{n}{2} + \frac{(1-s)^2n}{2}.
\end{equation*}

\subsubsection{Estimate for $T_j^1f\, T_j^2g$}

By 
Lemma \ref{T} \eqref{T_Lq},
it holds that
\begin{align*}
\big\|
T_j^1f\, T_j^2g
\big\|_{L^1}
&
\le
\big\|
T_j^1f
\big\|_{L^1}
\big\|
T_j^2g
\big\|_{L^\infty}
\\
&
\lesssim
\|\varphi(\cdot) \, \theta_{1} (2^{-j} \cdot)\|_{ C^{M} }
\|\varphi(\cdot) \, \theta_{2} (2^{-j} \cdot)\|_{ C^{M} }
\|f\|_{L^1} 
\|g\|_{L^\infty}
\\
&
\lesssim
\|\theta_1\|_{C^M}
\|\theta_2\|_{C^M}.
\end{align*}
Since $m < 0$, we obtain \eqref{goal} with $(U, V) = (T, T)$ from this estimate.

\subsubsection{Estimate for $T_j^1f\, S_j^2g$}

It follows from 
Lemmas \ref{T} \eqref{T_Lq}
and \ref{Sj_Lp}
that 
\begin{align*}
\big\|
T_j^1f\, S_j^2g
\big\|_{L^1}
&
\le
\big\|
T_j^1f
\big\|_{L^1}
\big\|
S_j^2g
\big\|_{L^\infty}
\\
&
\lesssim
(2^{j})^{\frac{sn}{2}}
\|\varphi(\cdot) \, \theta_{1} (2^{-j} \cdot)\|_{ C^{M} }
\|\theta_2\|_{C^M}
\|f\|_{L^1} 
\|g\|_{L^\infty}
\\
&
\lesssim
(2^{j})^{\frac{sn}{2}}
\|\theta_1\|_{C^M}
\|\theta_2\|_{C^M},
\end{align*}
which gives \eqref{goal} with $(U, V) = (T, S)$ because $m < -\frac{sn}{2}$.

\subsubsection{Estimate for $S_j^1f\, T_j^2g$}
We use 
Lemmas \ref{Sj_Lp}
and \ref{T} \eqref{T_Lq}
to obtain
\begin{align*}
\big\|
S_j^1f\, T_j^2g
\big\|_{L^1}
&
\le
\big\|
S_j^1f
\big\|_{L^1}
\big\|
T_j^2g
\big\|_{L^\infty}
\\
&
\lesssim
(2^{j})^{\frac{sn}{2}}
\|\theta_1\|_{C^M}
\|\varphi(\cdot) \, \theta_{2} (2^{-j} \cdot)\|_{ C^{M} }
\|f\|_{L^1} 
\|g\|_{L^\infty}
\\
&
\lesssim
(2^{j})^{\frac{sn}{2}}
\|\theta_1\|_{C^M}
\|\theta_2\|_{C^M}.
\end{align*}
Since $m < -\frac{sn}{2}$, this yields that \eqref{goal} holds with $(U, V) = (S, T)$.

\subsubsection{Estimate for $S_j^1f\, S_j^2g$}

We divide the $L^1$ norm in \eqref{goal} into the following three parts;
\begin{align} \label{decomp_first_kind}
\big\|
S_j^1f\, S_j^2g
\big\|_{L^1(\R^n)}
=
\big\|
S_j^1f\, S_j^2g
\big\|_{L^1(|x| \le 2r)}
+
\big\|
S_j^1f\, S_j^2g
\big\|_{L^1(2r < |x| \le 4)}
+
\big\|
S_j^1f\, S_j^2g
\big\|_{L^1(|x| > 4)}.
\end{align}
We first consider the norm ${L^1(|x| \le 2r)}$.
By the Cauchy-Schwarz inequality, we obtain
\begin{align*}
\big\|
S_{j}^{1} f
\big\|_{L^1(|x| \le 2r)}
\lesssim
r^{n/2}
\big\|
S_{j}^{1} f
\big\|_{L^2(\R^n)}
\lesssim
r^{n/2}
\|\theta_1\|_{C^{ 0 }}
\|f\|_{L^2}
\lesssim
\|\theta_1\|_{C^{ 0 }}.
\end{align*}
Hence,
by this inequality and Lemma \ref{Sj_Lp}, 
we have
\begin{align} \label{L^1_inside}
&
\big\|
S_j^1f\, S_j^2g
\big\|_{L^1(|x| \le 2r)}
\le
\big\|
S_j^1f
\big\|_{L^1(|x| \le 2r)}
\big\|
S_j^2g
\big\|_{L^{\infty}(\R^n)}
\lesssim
(2^j)^{\frac{sn}{2}}
\|\theta_1\|_{C^0}
\|\theta_2\|_{C^M}
\end{align}
for some $M \in \N$.
For the norm ${L^1(|x| > 4)}$,
we also have by 
Lemmas \ref{Sj_s<1} \eqref{Sj_s<1_outside}
and \ref{Sj_Lp}
\begin{align} \label{L^1_outside}
\begin{split}
\big\|
S_j^1f\, S_j^2g
\big\|_{L^1(|x| > 4)}
\le
\| S_j^1f \|_{L^1(|x| > 4)}
\| S_j^2g \|_{L^\infty (\R^n)}
\lesssim
(2^j)^{\frac{sn}{2}}
\|\theta_1\|_{C^M}
\|\theta_2\|_{C^M}.
\end{split}
\end{align}
Thus, we shall consider the estimate of the second term in the right hand side of \eqref{decomp_first_kind}.
We decompose it as follows;
\begin{align} \label{decomL1_s<1}
\begin{split}
&
\big\|
S_j^1f\, S_j^2g
\big\|_{L^1(2r < |x| \le 4)}
\le
\sum_{k \in \N, \,\, 2^{k} r \le 4}
\big\|
S_j^1f\, S_j^2g
\big\|_{L^1(2^{k} r \le |x| \le 2^{k+1}r)}
\\
&
=
\Big(
\sum_{k \in \N, \,\, 2^{k} r < 2^{-j(1-s)}}
+
\sum_{k \in \N, \,\, 2^{-j(1-s)} \le 2^{k} r \le 4}
\Big)
\big\|
S_j^1f\, S_j^2g
\big\|_{L^1(2^{k} r \le |x| \le 2^{k+1}r)} .
\end{split}
\end{align}
Here,
we remark that
the first sum in the second line
vanishes if $2^{-j(1-s)} \le 2r$.

We show that the following estimate holds;
for $0 \le t \le 1$ and $0 \le N < \frac{n}{2(1-s)}$,
\begin{align} \label{SSsmall<1}
\begin{split}
&
\big\|
S_j^1f\, S_j^2g
\big\|_{L^1(2^k r \le |x| \le 2^{k+1}r)}
\\
&
\lesssim
2^{-jm} \,
\big( 2^{j(1-s)} 2^{k} r \big)^{ -N(1-t) + \frac{(1-s)n}{2}}
\min \big\{ (2^{j} r)^{t}, ( 2^{j} r )^{ -\frac{nt}{2} } \big\}
\|\theta_1\|_{C^M}
\|\theta_2\|_{C^M}
\end{split}
\end{align}
for $k \in \N$ satisfying that $2^{k} r \le 4$.
In fact, it follows from 
Lemmas \ref{Sj_L2} \eqref{Sj_L2_torus}
and \ref{Sj_s<1} \eqref{Sj_s<1_inside}
that
\begin{align*} 
&
\big\|
S_j^1f\, S_j^2g
\big\|_{L^1(2^k r \le |x| \le 2^{k+1}r)}
\le
\big\|
S_j^1f
\big\|_{L^2(2^k r \le |x| \le 2^{k+1}r)}
\big\|
S_j^2g
\big\|_{L^2(2^k r \le |x| \le 2^{k+1}r)}
\\
&
\lesssim
( 2^{j} )^{\frac{n}{2} } \,
\big( 2^{j(1-s)} 2^{k} r \big)^{ -N(1-t) }
\min \big\{ (2^{j} r)^{t}, ( 2^{j} r )^{ -\frac{nt}{2} } \big\} 
(2^k r)^{\frac{(1-s)n}{2}}
\|\theta_1\|_{C^M}
\|\theta_2\|_{C^M}
\|g\|_{L^\infty}
\\
&
=
( 2^{j} )^{\frac{n}{2} - \frac{(1-s)^{2} n}{2}} \,
\big( 2^{j(1-s)} 2^{k} r \big)^{ -N(1-t) + \frac{(1-s)n}{2}}
\min \big\{ (2^{j} r)^{t}, ( 2^{j} r )^{ -\frac{nt}{2} } \big\}
\|\theta_1\|_{C^M}
\|\theta_2\|_{C^M}
\\
&
=
2^{ -jm } \,
\big( 2^{j(1-s)} 2^{k} r \big)^{ -N(1-t) + \frac{(1-s)n}{2}}
\min \big\{ (2^{j} r)^{t}, ( 2^{j} r )^{ -\frac{nt}{2} } \big\}
\|\theta_1\|_{C^M}
\|\theta_2\|_{C^M},
\end{align*}
where we remark that
Lemmas \ref{Sj_L2} \eqref{Sj_L2_torus} and
\ref{Sj_s<1} \eqref{Sj_s<1_inside}
are 
applicable to the factor involved in $f$ and $g$, respectively,
since $k \in \N$ implies that $2^{k} r \ge 2r$
and also $2^{k} r \le 4$ implies that
$|x| \le 2^{k+1} r \le 8$.

The former sum in the second line of \eqref{decomL1_s<1} is 
estimated as follows.
Since $0 < s < 1$,
it follows from \eqref{SSsmall<1} 
with $0 \le t \le 1$ and $N=0$
that
\begin{align} \label{SSsmall<1_sum_1}
\begin{split}
&
\sum_{k \in \N, \, 2^k r < 2^{-j(1-s)}}
\big\|
S_j^1f\, S_j^2g
\big\|_{L^1(2^k r \le |x| \le 2^{k+1}r)}
\\
&
\lesssim
2^{ -jm }
\min \big\{ (2^{j} r)^{t} , ( 2^{j} r )^{ -\frac{nt}{2} } \big\} 
\|\theta_1\|_{C^M}
\|\theta_2\|_{C^M}
\sum_{ k \, : \, 2^{j(1-s)} 2^k r < 1 }
\big(
2^{j(1-s)} 2^k r 
\big)^{\frac{(1-s)n}{2}}
\\
&
\lesssim
2^{ -jm } 
\min \big\{ (2^{j} r)^{t} , ( 2^{j} r )^{ -\frac{nt}{2} } \big\} 
\|\theta_1\|_{C^M}
\|\theta_2\|_{C^M} .
\end{split}
\end{align}

On the other hand, 
the latter sum in the second line of \eqref{decomL1_s<1} is 
estimated as follows.
Since $0 < s < 1$, we have $\frac{(1-s)n}{2} < \frac{n}{2(1-s)}$,
and consequently we can choose
$0 < t < 1$ and $N > 0$ satisfying
$\frac{(1-s)n}{2(1-t)} < N < \frac{n}{2(1-s)}$.
Therefore,
by \eqref{SSsmall<1} with such $t$ and $N$,
it follows that 
\begin{align} \label{SSsmall<1_sum_2}
\begin{split}
&
\sum_{k \in \N, \, 2^{-j(1-s)} \le 2^k r \le 4}
\big\|
S_j^1f\, S_j^2g
\big\|_{L^1(2^k r \le |x| \le 2^{k+1}r)}
\\
&
\lesssim
2^{-jm}
\min \big\{ (2^{j} r)^{t}, ( 2^{j} r )^{ -\frac{nt}{2} } \big\} 
\|\theta_1\|_{C^M}
\|\theta_2\|_{C^M}
\sum_{ k \, : \, 2^{j(1-s)} 2^k r \ge 1 }
\big(
2^{j(1-s)} 2^k r
\big)^{-N(1-t) + \frac{(1-s)n}{2}}
\\
&
\lesssim
2^{-jm}
\min \big\{ ( 2^{j} r )^{t}, ( 2^{j} r )^{ -\frac{nt}{2} } \big\}
\|\theta_1\|_{C^M}
\|\theta_2\|_{C^M} .
\end{split}
\end{align}

Thus,
taking the same $0 < t < 1$ 
for \eqref{SSsmall<1_sum_1} and \eqref{SSsmall<1_sum_2}
and then
combining them with \eqref{decomL1_s<1}, we obtain
\begin{align*}
\big\|
S_j^1f\, S_j^2g
\big\|_{L^1(2r \le |x| \le 4)}
\lesssim
2^{ -jm } 
\min \big\{ ( 2^{j} r )^{t}, ( 2^{j} r )^{ -\frac{nt}{2} } \big\} \,
\|\theta_1\|_{C^M}
\|\theta_2\|_{C^M}.
\end{align*}
Hence,  combining this with \eqref{L^1_inside} and \eqref{L^1_outside}, 
we conclude that \eqref{goal} holds with 
$(U, V) = (S, S)$.
This completes the proof for the case $0 < s < 1$.


\subsection{Case $s>1$}
\label{s>1}
Before beginning with the proof,
let us recall the critical order for $s>1$:
\[
m = - \frac{ns}{2} .
\]
For this $m$,
in what follows,
we will prove that \eqref{goal} holds.
In this subsection,
we take a $j_{0} \in \N$ such that
$2^{j_{0} (s-1)} \ge 2$.
For such $j_{0} \in \N$,
we have by 
Lemma \ref{T} \eqref{T_Lq}
or \ref{Sj_Lp} 
\begin{align*}
\sum_{ 1 \le j \le j_{0} }
2^{jm} 
\big\| U_{j}^{1} f \,
V_{j}^{2} g \big\|_{L^1}
&
\lesssim 
\sum_{ 1 \le j \le j_{0} }
( 2^{j} )^{ m + ns } \,
\|\theta_1\|_{C^{M}}
\|\theta_2\|_{C^{M}}
\| f \|_{ L^1 } \| g \|_{ L^\infty }
\\
&
\lesssim 
\|\theta_1\|_{C^{M}}
\|\theta_2\|_{C^{M}}
\end{align*}
for $U,V \in \{S, T\}$.
Therefore, 
in order to achieve \eqref{goal},
it is sufficient to show that
\begin{equation} \label{goal_s>1}
\sum_{ j>j_{0} }
2^{jm} 
\big\| U_{j}^{1} f \,
V_{j}^{2} g \big\|_{L^1}
\lesssim 
\|\theta_1\|_{C^{M}}
\|\theta_2\|_{C^{M}} ,
\qquad
U,V \in \{S, T\} .
\end{equation}
To this end,
except for the case $(U,V) = (T, T)$,
we split the norm of $L^1(\R^n)$
as follows:
\begin{align} \label{decomL1}
\begin{split}
&
\big\| U_{j}^{1} f \,
V_{j}^{2} g \big\|_{L^1 (\R^n) }
=
\big\| U_{j}^{1} f \,
V_{j}^{2} g \big\|_{L^1 (|x| \le 2^{j(s-1)+1}) }
+
\big\| U_{j}^{1} f \,
V_{j}^{2} g \big\|_{L^1 (|x| \ge 2^{j(s-1)+1}) }
\\
&
\quad
\le
\big\| U_{j}^{1} f \,
V_{j}^{2} g \big\|_{L^1 (|x| \le 2^{j(s-1)+1}) }
+
\sum_{ k \in \N ,\, 2^{k} \ge 2^{j(s-1)} } 
\big\| U_{j}^{1} f \,
V_{j}^{2} g \big\|_{L^1 ( 2^{k} \le |x| \le 2^{k+1} ) } ,
\end{split}
\end{align}
where, the sum over 
$2^{k} \ge 2^{j(s-1)}$
should be read as 
the sum over $k \ge k_{0}$
with a positive integer $k_{0} = k_{0} (j)$ satisfying that
$2^{k_{0}-1} < 2^{j(s-1)} \le 2^{k_{0}}$.
Here, we are able to choose such $k_{0} \in \N$,
since $2^{j (s-1)} \ge 2$ for $j > j_{0}$.
Now, we shall prove \eqref{goal_s>1}.

\subsubsection{Estimate for
$T_{j}^{1} f \, T_{j}^{2} g$}

By 
Lemma \ref{T} \eqref{T_Lq},
\begin{align*}
\big\| T_{j}^{1} f \,
T_{j}^{2} g \big\|_{L^1}
&
\le
\| T_{j}^{1} f \|_{L^1}
\| T_{j}^{2} g \|_{L^\infty}
\\
&
\lesssim
\|\varphi(\cdot) \, \theta_{1} (2^{-j} \cdot)\|_{ C^{M} }
\|\varphi(\cdot) \, \theta_{2} (2^{-j} \cdot)\|_{ C^{M} }
\|f\|_{L^1} \|g\|_{L^\infty}
\\
&
\lesssim
\|\theta_{1}\|_{ C^{M} }
\|\theta_{2}\|_{ C^{M} } ,
\end{align*}
and thus,
since $m = - \frac{ns}{2} < 0$,
\eqref{goal_s>1} holds for $(U, V) = (T, T)$.

\subsubsection{Estimate for
$T_{j}^{1} f \, S_{j}^{2} g$}

We use the decomposition \eqref{decomL1}.
For the first term in \eqref{decomL1},
using 
Lemma \ref{T} \eqref{T_Lq} with $(p, q)=(1, 2)$ and 
Lemma \ref{Sj_s>1} with $A=2^{j(s-1) +1}$ 
to the factors for $f$ and $g$ respectively,
we have
\begin{align} \label{TSsmall}
\begin{split}
&
\big\| T_{j}^{1} f \,
S_{j}^{2} g \big\|_{L^1 (|x| \le 2^{j(s-1) +1}) }
\le
\| T_{j}^{1} f \|_{ L^2 (\R^n) }
\| S_{j}^{2} g \|_{L^2 (|x| \le 2^{j(s-1) + 1}) }
\\
&
\lesssim
\|\varphi(\cdot) \, \theta_{1} (2^{-j} \cdot)\|_{ C^{M} }
\|f\|_{L^1} \,
(2^{j(s-1)} )^{ \frac{n}{2} }
\|\theta_{2}\|_{ C^{M} }
\|g\|_{L^\infty}
\\
&
\lesssim
(2^{j} )^{ \frac{n(s-1)}{2} }
\|\theta_{1}\|_{ C^{M} }
\|\theta_{2}\|_{ C^{M} } .
\end{split}
\end{align}
Next,
for the summand in the sum of \eqref{decomL1},
we have 
by H\"older's inequality,
Lemma \ref{T} \eqref{T_Linfty} with $A = 2^k$,
and Lemma \ref{Sj_s>1} with $A = 2^{k+1}$
\begin{align*}
&
\big\| T_{j}^{1} f \,
S_{j}^{2} g \big\|_{L^1 ( 2^{k} \le |x| \le 2^{k+1} ) }
\lesssim
(2^{k})^{ \frac{n}{2} }
\big\| T_{j}^{1} f \big\|_{L^{\infty} ( 2^{k} \le |x| \le 2^{k+1} ) }
\big\|S_{j}^{2} g \big\|_{L^2 ( 2^{k} \le |x| \le 2^{k+1} ) }
\\
&
\lesssim
(2^{k})^{ \frac{n}{2} }
\cdot 
(2^{k})^{ -(n+s) }
\|\varphi(\cdot) \, \theta_{1} (2^{-j} \cdot)\|_{ C^{M} }
\cdot
(2^{k})^{ \frac{n}{2} }
\|\theta_{2}\|_{ C^{M} } 
\| g\|_{L^\infty}
\\
&
\lesssim
2^{-ks} \, 
\|\theta_{1}\|_{ C^{M} }
\|\theta_{2}\|_{ C^{M} } ,
\end{align*}
where,
it should be remarked that
Lemma \ref{Sj_s>1}
is applicable
to the factor involved in $g$,
since $2^{k+1} \ge 2^{j(s-1)}$
in the sum of \eqref{decomL1}.
This yields 
from $s > 1$ 
that
\begin{align} \label{TSlarge}
\begin{split}
\sum_{ k \in \N ,\, 2^{k} \ge 2^{j(s-1)} } 
\big\| T_{j}^{1} f \,
S_{j}^{2} g \big\|_{L^1 ( 2^{k} \le |x| \le 2^{k+1} ) }
&
\lesssim
\|\theta_{1}\|_{ C^{M} }
\|\theta_{2}\|_{ C^{M} }
\sum_{ k \in \N } 
2^{-ks}
\approx
\|\theta_{1}\|_{ C^{M} }
\|\theta_{2}\|_{ C^{M} } .
\end{split}
\end{align}
Therefore,
combining \eqref{TSsmall} and \eqref{TSlarge} with \eqref{decomL1}
we obtain
\[
\big\| T_{j}^{1} f \,
S_{j}^{2} g \big\|_{L^1}
\lesssim
(2^{j} )^{ \frac{n(s-1)}{2} }
\|\theta_{1}\|_{ C^{M} }
\|\theta_{2}\|_{ C^{M} },
\]
which implies that \eqref{goal_s>1} holds for $(U, V) = (T, S)$
since $m < - \frac{n(s-1)}{2}$.

\subsubsection{Estimate for
$S_{j}^{1} f \, T_{j}^{2} g$}

For the first term in \eqref{decomL1},
Lemmas \ref{Sj_L2} \eqref{Sj_L2_Rn}
and \ref{T} \eqref{T_Lq}
yield that,
for $0 \le t \le 1$,
\begin{align} \label{STsmall}
\begin{split}
\| S_{j}^{1} f \,
T_{j}^{2} g 
\|_{L^1 (|x| \le 2^{j(s-1)+1}) }
&
\lesssim
( 2^{j} )^{ \frac{n(s-1) }{2} }
\| S_{j}^{1} f \|_{ L^2 (\R^n) }
\| T_{j}^{2} g \|_{ L^\infty (\R^n) }
\\
&
\lesssim
( 2^{j} )^{ \frac{ns }{2} }
\min \big\{ (2^{j} r)^{t} , ( 2^{j} r )^{ -\frac{nt}{2} } \big\}
\| \theta_1 \|_{ C^{0} }
\| \theta_2 \|_{ C^{M} } .
\end{split}
\end{align}
We shall next consider the sum of \eqref{decomL1}.
By H\"older's inequality and
Lemmas \ref{Sj_L2} \eqref{Sj_L2_torus}
and \ref{T} \eqref{T_Lq},
the summand in \eqref{decomL1}
is estimated by
\begin{align*}
&
\| S_{j}^{1} f \,
T_{j}^{2} g 
\|_{L^1 ( 2^{k} \le |x| \le 2^{k+1} ) } 
\lesssim
(2^{k})^{ \frac{n}{2} }
\| S_{j}^{1} f \|_{ L^2 ( 2^{k} \le |x| \le 2^{k+1} ) } 
\| T_{j}^{2} g \|_{ L^{\infty} ( \R^n ) } 
\\
&
\lesssim
(2^{k})^{ \frac{n}{2} }
\cdot
(2^{j})^{ \frac{n}{2} } \,
\big( 2^{j(1-s)} 2^{k} \big)^{ -N(1-t) }
\min \big\{ (2^{j} r)^{t}, ( 2^{j} r )^{ -\frac{nt}{2} } \big\} 
\| \theta_1 \|_{ C^{M} }
\| \theta_2 \|_{ C^{M} } 
\\
&
=
( 2^{j} )^{ \frac{ns}{2} } \,
\big( 2^{j(1-s)} 2^{k} \big)^{ -N(1-t) + \frac{n}{2} }
\min \big\{ (2^{j} r)^{t}, ( 2^{j} r )^{ -\frac{nt}{2} } \big\} 
\| \theta_1 \|_{ C^{M} }
\| \theta_2 \|_{ C^{M} } ,
\end{align*}
where
we notice that
$2^{k} \ge 2r$ holds
in the sum of \eqref{decomL1},
since
$k$ is restricted to $\N$ and
$r \le 1$:
this allows us to apply Lemma \ref{Sj_L2} \eqref{Sj_L2_torus} 
with $A=2^{k}$
to the factor with respect to $f$.
Then,
choosing
$0 < t < 1$ and $N > 0$ 
such that 
$-N(1-t) + \frac{n}{2} < 0$,
we have
\begin{align} \label{STlarge}
\begin{split}
&
\sum_{ k \in \N ,\, 2^{k} \ge 2^{j(s-1)} } 
\| S_{j}^{1} f \,
T_{j}^{2} g 
\|_{L^1 ( 2^{k} \le |x| \le 2^{k+1} ) } 
\\
&
\lesssim
( 2^{j} )^{ \frac{ns}{2} } 
\min \big\{ (2^{j} r)^{t}, ( 2^{j} r )^{ -\frac{nt}{2} } \big\} 
\| \theta_1 \|_{ C^{M} }
\| \theta_2 \|_{ C^{M} } 
\sum_{k \, : \, 2^{j(1-s)} 2^{k} \ge 1 } 
\big( 2^{j(1-s)} 2^{k} \big)^{ -N(1-t) + \frac{n}{2} }
\\
&
\approx
( 2^{j} )^{ \frac{ns}{2} } 
\min \big\{ (2^{j} r)^{t}, ( 2^{j} r )^{ -\frac{nt}{2} } \big\} 
\| \theta_1 \|_{ C^{M} }
\| \theta_2 \|_{ C^{M} } .
\end{split}
\end{align}
Thus, 
by choosing the same $0<t<1$ 
for \eqref{STsmall} and \eqref{STlarge},
and
by
combining them with \eqref{decomL1},
we obtain 
for such $0<t<1$ 
\begin{align*}
&
\big\| S_{j}^{1} f \,
T_{j}^{2} g \big\|_{L^1}
\lesssim
( 2^{j} )^{ \frac{ns}{2} } \,
\min \big\{ (2^{j} r)^{t}, ( 2^{j} r )^{ -\frac{nt}{2} } \big\} \,
\|\theta_{1}\|_{ C^{M} }
\|\theta_{2}\|_{ C^{M} }
,
\end{align*}
which implies that \eqref{goal_s>1} holds for $(U, V) = (S, T)$.

\subsubsection{Estimate for
$S_{j}^{1} f \, S_{j}^{2} g$}

For the first term in \eqref{decomL1}, by 
Lemmas \ref{Sj_L2} \eqref{Sj_L2_Rn}
and \ref{Sj_s>1},
we have for $0 \le t \le 1$
\begin{align} \label{SSsmall}
\begin{split}
&
\| S_{j}^{1} f \,
S_{j}^{2} g 
\|_{L^1 (|x| \le 2^{j(s-1)+1}) }
\lesssim
\| S_{j}^{1} f \|_{ L^2 (\R^n) }
\| S_{j}^{2} g \|_{ L^2 (|x| \le 2^{j(s-1)+1}) }
\\
&
\lesssim
( 2^{j} )^{ \frac{n}{2} }
\min \big\{ (2^{j} r)^{t}, ( 2^{j} r )^{ -\frac{nt}{2} } \big\} \,
\| \theta_1 \|_{ C^{M} }
\cdot
( 2^{j} )^{ \frac{n(s-1)}{2} }
\| \theta_2 \|_{ C^{M} } 
\\
&
=
( 2^{j} )^{ \frac{ns}{2} }
\min \big\{ (2^{j} r)^{t}, ( 2^{j} r )^{ -\frac{nt}{2} } \big\} \,
\| \theta_1 \|_{ C^{M} }
\| \theta_2 \|_{ C^{M} } .
\end{split}
\end{align}
For the summand of \eqref{decomL1},
we have by 
Lemmas \ref{Sj_L2} \eqref{Sj_L2_torus}
and \ref{Sj_s>1} 
\begin{align*}
&
\| S_{j}^{1} f \,
S_{j}^{2} g 
\|_{L^1 ( 2^{k} \le |x| \le 2^{k+1} ) } 
\le
\| S_{j}^{1} f \|_{ L^2 ( 2^{k} \le |x| \le 2^{k+1} ) } 
\| S_{j}^{2} g \|_{ L^2 ( 2^{k} \le |x| \le 2^{k+1} ) } 
\\
&
\lesssim
(2^{j})^{ \frac{n}{2} } \,
\big( 2^{j(1-s)} 2^{k} \big)^{ -N(1-t) }
\min \big\{ (2^{j} r)^{t}, ( 2^{j} r )^{ -\frac{nt}{2} } \big\} 
\| \theta_1 \|_{ C^{M} }
\cdot
(2^{k})^{ \frac{n}{2} }
\| \theta_2 \|_{ C^{M} } 
\\
&
=
(2^{j})^{ \frac{ns}{2} } 
\big( 2^{j(1-s)} 2^{k} \big)^{ -N(1-t) + \frac{n}{2} }
\min \big\{ (2^{j} r)^{t}, ( 2^{j} r )^{ -\frac{nt}{2} } \big\} \,
\| \theta_1 \|_{ C^{M} }
\| \theta_2 \|_{ C^{M} } ,
\end{align*}
which yields that,
since there exist
$N>0$ and $0<t<1$ 
such that $-N(1-t) + \frac{n}{2} < 0$,
\begin{align} \label{SSlarge}
\begin{split}
&
\sum_{ k \in \N ,\, 2^{k} \ge 2^{j(s-1)} } 
\| S_{j}^{1} f \,
S_{j}^{2} g 
\|_{L^1 ( 2^{k} \le |x| \le 2^{k+1} ) } 
\lesssim
(2^{j})^{ \frac{ns}{2} } 
\min \big\{ (2^{j} r)^{t}, ( 2^{j} r )^{ -\frac{nt}{2} } \big\} \,
\| \theta_1 \|_{ C^{M} }
\| \theta_2 \|_{ C^{M} } .
\end{split}
\end{align}
Therefore,
choosing the same $0<t<1$ 
for \eqref{SSsmall} and \eqref{SSlarge}
and then
combining them with \eqref{decomL1},
we obtain 
for such $0<t<1$ 
that
\begin{align*}
&
\| S_{j}^{1} f \,
S_{j}^{2} g \|_{L^1 }
\lesssim
( 2^{j} )^{ \frac{ns}{2} } 
\min \big\{ (2^{j} r)^{t}, ( 2^{j} r )^{ -\frac{nt}{2} } \big\} \,
\|\theta_{1}\|_{ C^{M} }
\|\theta_{2}\|_{ C^{M} } .
\end{align*}
This gives \eqref{goal_s>1} for $(U, V) = (S, S)$,
and we complete the proof for $s > 1$.

\section{Necessary conditions on $m$}
\label{Nec_on_m}

In this section, we shall give a proof of Theorem \ref{thm_necessity}.
In this section, we use the notation 
$X_r$ given in \eqref{funcspXr}.

\begin{proof}[Proof of Theorem \ref{thm_necessity} ]
Let $0 < s < 1$ or $1 < s < \infty$,
and let
$m \in \R$, 
$(p, q) \in \RomI \cup \II \cup \IV \cup \VI$, and $1/ r = 1/p + 1/q$.
If all bilinear operators $T^{s}_{\sigma}$, 
$\sigma \in S^m_{1,0}(\R^{2n})$,
are bounded from $H^p \times H^q$ to $X_r$, 
then, 
by virtue of the closed graph theorem,
there exist $c > 0$ and $N \in \N$
such that 
\begin{align} \label{nec_on_T}
&
\big\|
T_{\sigma}^{s} 
\big\|_{H^p \times H^q \to X_r}
\le
c
\,
\max_{|\alpha|, |\beta| \le N}
\big\|
(1+|\xi| + |\eta|)^{-m + |\alpha| + |\beta|}
\partial_{\xi}^{\alpha}
\partial_{\eta}^{\beta}
\sigma(\xi, \eta)
\big\|_{L^\infty(\R^{2n})}
\end{align}
 holds for all $\sigma \in S^{m}_{1, 0}(\R^n)$
(see B\'enyi--Bernicot--Maldonado--Naibo--Torres \cite[Lemma 2.6]{BBMNT-IUMJ}
for the argument using the closed graph theorem). 

Now, we take two functions $ \theta $ and $\phi$ 
such that
\begin{align*}
&
\theta \in C^\infty_0(\R^n),
\quad
\supp \theta \subset \{3^{-1} \le |\xi| \le 3\},
\quad
\theta(\xi) = 1 
\,\,\,\,
\text{on}
\,\,\,\, 
\{2^{-1} \le |\xi| \le 2\},
\\
&
\phi \in C^\infty_0(\R^n),
\quad
\supp \phi \subset \{ |\xi| \le 3\},
\quad
\phi(\xi) = 1 
\,\,\,\,
\text{on}
\,\,\,\, 
\{ |\xi| \le 2\}.
\end{align*}
For $j \in \N$, we set
\begin{align*}
\sigma_j(\xi, \eta)
=
2^{jm}
\theta(2^{-j}\xi)
\phi(2^{-j}\eta).
\end{align*}
Then we have
\begin{align*}
\big|
\partial_{ \xi }^{ \alpha }
\partial_{ \eta }^{ \beta }
\sigma_j(\xi, \eta)
\big|
\le
C_{ \alpha, \beta }
(1 + |\xi| + |\eta|)^{ m-|\alpha|-|\beta| },
\quad
\alpha, \beta \in \N_0^n,
\end{align*}
uniformly in $j \in \N$.
Hence,  
by \eqref{nec_on_T}, 
we see that 
there exists $C > 0$ such that
\begin{align} \label{est_opnorm}
\big\|
T_{\sigma_j}^{s}
\big\|_{H^{p} \times H^{q} \to X_r}
\le
C,
\,\,\,\,
j \in \N.
\end{align}
We shall prove that 
\eqref{est_opnorm} holds only if $m \le m_s(p, q)$.

Take a function 
$\psi \in C^\infty_0(\R^n)$ such that 
$\supp \psi \subset \{2^{-1} \le |\xi| \le 2\}$
and
$\psi (\xi)\neq 0$ on $\{2/3 \le |\xi| \le 3/2\}$,
and set
\begin{align*}
&
f^{+}_{j} (x)
=
\Big(
e^{i|\xi|^s}
\psi(2^{-j}\xi)
\Big)^{\vee} (x),
\\
&
f^{-}_{j} (x)
=
\Big(
e^{-i|\xi|^s}
\psi(2^{-j}\xi)
\Big)^{\vee} (x),
\\
&
f_{j}(x)
=
\Big(
\psi(2^{-j}\xi)
\Big)^{\vee} (x)
=
2^{jn}
(\psi)^{\vee}(2^j x).
\end{align*}
Then we have the following estimates;
\begin{align} 
&
\|f^{\pm}_{j}\|_{H^p}
\approx
\|f^{\pm}_{j}\|_{L^p}
\approx
2^{j(n - \frac{sn}{2})}
2^{-j(1-s)\frac{n}{p}}
\,\,\,\,
\text{for} 
\,\,\,\,
1 \le p \le \infty,
\label{est_of_f_jpm}
\\
&
\|f_{j}\|_{H^p}
\approx
\|f_{j}\|_{L^p}
\approx
2^{j (n - \frac{n}{p})}
\label{est_of_f_j1}
\,\,\,\,
\text{for} 
\,\,\,\,
1 \le p \le \infty,
\\
&
\|(f_{j})^2\|_{BMO}
\approx
2^{2 j n}
\label{est_of_f_j2}.
\end{align}
In fact, since the Fourier transform of $f^{\pm}_j$ 
are supported in the annulus $\{2^{j-1} \le |\xi| \le 2^{j+1}\}$,
we have the first inequality in \eqref{est_of_f_jpm},
and the second $\approx$ in \eqref{est_of_f_jpm} 
follows from Proposition \ref{Kj_psi}.
On the other hand, by a straightforward calculation,
we see that \eqref{est_of_f_j1} and \eqref{est_of_f_j2} hold.

\bigskip
\noindent
\textit{Proof of the case $(p, q) \in \RomI$.}
In this case we use the function $f^{-}_{j}$.
Since 
$\psi(2^{-j} \xi) \theta(2^{-j} \xi) 
= 
\psi(2^{-j} \xi) $
and
$\psi(2^{-j} \eta) \phi(2^{-j} \eta) 
= 
\psi(2^{-j} \eta) $, 
we have
\begin{align*}
T^{s}_{\sigma_j}(f^{-}_j, f^{-}_j)(x)
=
2^{jm}
\big(
f_{j}(x)
\big)^2,
\end{align*}
and hence, we obtain
\begin{align*}
\big\|
T^{s}_{\sigma_j}(f^{-}_{j}, f^{-}_{j})
\big\|_{X_r}
=
2^{jm}
\big\|
( f_{j} )^2
\big\|_{X_r}
\approx
2^{j(m + 2n - \frac{n}{r})},
\quad
j \in \N,
\end{align*}
where, in the last inequality, we used \eqref{est_of_f_j1} combined with
the identity $\|( f_{j} )^2\|_{L^r} = \|f_{j}\|_{L^{2r}}^2$
if $r < \infty$, and
used \eqref{est_of_f_j2} 
if $r = \infty$.
Combining this 
with \eqref{est_opnorm} and \eqref{est_of_f_jpm}, 
we obtain
\begin{align*}
2^{j(m + 2n - \frac{n}{r})}
\lesssim
2^{j(n - \frac{sn}{2})}
2^{-j(1-s)\frac{n}{p}}
\,
2^{j(n - \frac{sn}{2})}
2^{-j(1-s)\frac{n}{q}},
\,\,\,\,
j \in \N.
\end{align*}
This is possible only when 
$m 
\le 
-sn(\frac{1}{2} - \frac{1}{p} 
+
\frac{1}{2} - \frac{1}{q}
)$.
This completes the  proof of the case $(p, q) \in \RomI$.

\bigskip
\noindent
\textit{Proof of the case $(p, q) \in \II$.}
By the same reasons as above,
we have 
\begin{align*}
T^{s}_{\sigma_j}(f_j, f_j)(x)
=
2^{jm}
(f^{+}_{j}(x))^2,
\end{align*}
and thus \eqref{est_of_f_jpm} implies
that, since $2r \ge 1$ for $(p, q) \in \II$,

\begin{align*}
\|
T^{s}_{\sigma_j}(f_j, f_j)
\|_{L^r}
=
2^{jm}
\big\|
(f^{+}_{j})^2
\big\|_{L^{r}}
=
2^{jm}
\big\|
f^{+}_{j}
\big\|_{L^{2r}}^2
\approx
2^{jm}
\big(
2^{j(n - \frac{sn}{2})}
\big)^2
2^{-j(1-s) \frac{n}{r}}.
\end{align*}
Hence, 
if \eqref{est_opnorm} holds, then
this estimate and \eqref{est_of_f_j1} imply that
\begin{align*}
2^{jm}
\big(
2^{j(n - \frac{sn}{2})}
\big)^2
\,
2^{-j(1-s) \frac{n}{r}}
\lesssim
2^{j (n - \frac{n}{p})}
2^{j (n - \frac{n}{q})},
\,\,\,\,
j \in \N,
\end{align*}
which holds only if
$m
\le
-sn
(\frac{1}{p} - \frac{1}{2}
+
\frac{1}{q} - \frac{1}{2})$.
This completes the proof of the case $(p, q) \in \II$.

\bigskip
\noindent
\textit{Proof of the case $(p, q) \in \IV$.}
In addition to the functions $f_j^{\pm}$ and $f_j$, 
we use the following functions;
\begin{align*}
&
g_j(x)
= 
\Big(
e^{-i|\eta|^s}
\psi(2^{-j(1-s)} \eta)
\Big)^{\vee}(x),
\\
&
h_j(x)
=
\Big(
e^{-i2|\eta|^s}
\psi(2^{-j}\eta)
\Big)^{\vee}(x),
\end{align*}
where the function $\psi$ is the same given in the definition of $f^{\pm}_j$ and $f_j$.
Since the support of 
$\widehat{g}_j$ is 
included in the annulus
$\{2^{j(1-s) -1} \le |\eta| \le 2^{j(1-s) + 1}\}$,
if $0 < s < 1$,
then we have by Proposition \ref{Kj_psi}
\begin{align} \label{est_of_g_j}
\|g_j\|_{H^q}
\approx
\|g_j\|_{L^q}
\approx
2^{j(1-s)(n - \frac{sn}{2})} 
2^{-j(1-s)^2 \frac{n}{q}}
\,\,\,\,
\text{for} 
\,\,\,\,
1 \le q \le \infty
\end{align}
(we notice that this holds when $s<1$,
since
Proposition \ref{Kj_psi} treats the function 
whose Fourier support locates far from the origin;
the support of $\widehat{g}_j$ locates near the origin when $s>1$).
On the other hand, 
in the same way as for the functions $f^{\pm}_j$, 
we also have 
\begin{align} \label{est_of_h_j}
\|h_j\|_{H^q}
\approx
\|h_j\|_{L^q}
\approx
2^{j(n-\frac{sn}{2})} 2^{-j(1-s)\frac{n}{q}}
\,\,\,\,
\text{for} 
\,\,\,\,
1 \le q \le \infty.
\end{align}

Now, we first consider the case $0< s < 1$.
Observe that
$\theta(2^{-j} \xi) \psi(2^{-j} \xi) = \psi(2^{-j} \xi)$ and
$\psi(2^{-j(1-s)} \eta) \phi(2^{-j} \eta) = \psi(2^{-j(1-s)} \eta)$.
Hence we have
\begin{align*}
T^{s}_{\sigma_j}(f_j, g_j)(x)
&
=
2^{jm}
2^{j(1-s)n}
f^{+}_{j}(x)
( \psi )^{\vee}
(2^{j(1-s)}x).
\end{align*}
Then, 
from \eqref{Kjxradial},
it holds that 
\begin{align*}
\big|
T^{s}_{\sigma_j}(f_{j}, g_j)(x)
\big|
\ichi_{\{a^{\prime} < 2^{j(1-s)}|x| \le b^{\prime}\}}
&
\approx
2^{jm}
2^{j(1-s)n}
2^{j(n-\frac{ns}{2})}
|
(\psi)^{\vee}(2^{j(1-s)}x)
|
\ichi_{\{a^{\prime} < 2^{j(1-s)}|x| \le b^{\prime}\}}
\end{align*}
for all $j > j_0$, 
where $a^{\prime}$, $b^{\prime}$ and $j_0$ are the same 
given in Proposition \ref{Kj_psi} (see \eqref{Kjxradial}).
Thus, we obtain 
\begin{align*}
\big\|
T^{s}_{\sigma_j}(f_{j}, g_j)
\big\|_{L^r}
&
\gtrsim
2^{jm}
2^{j(n-\frac{ns}{2})}
2^{j(1-s)n}
\big\|
(\psi)^{\vee}(2^{j(1-s)}x)
\ichi_{\{a^{\prime} < 2^{j(1-s)}|x| \le b^{\prime}\}}
\big\|_{L^r_x}
\\
&
=
c
\,
2^{jm}
2^{j(n-\frac{ns}{2})}
2^{j(1-s)n}
2^{-j(1-s) \frac{n}{r}},
\quad
j > j_0, 
\end{align*}
with $c = \|(\psi)^{\vee} \ichi_{\{a^{\prime} < |x| \le b^{\prime}\}}\|_{L^{r}} > 0$.
Hence
it follows from 
\eqref{est_opnorm}, \eqref{est_of_f_j1} and \eqref{est_of_g_j}
that
\begin{align*}
2^{jm}
2^{j(n-\frac{ns}{2})}
2^{j(1-s)n}
2^{-j(1-s) \frac{n}{r}}
\lesssim
2^{j(n - \frac{n}{p})}
2^{j(1-s)(n - \frac{sn}{2})} 
2^{-j(1-s)^2 \frac{n}{q}},
\,\,\,\,
j > j_0,
\end{align*}
which is possible only when
$
m
\le
-sn
(\frac{1}{p}
-
\frac{1}{2})
-s(1-s)n
(\frac{1}{2}
-
\frac{1}{q})$.

Finally, we shall consider the case $s > 1$.
Since
\begin{align*}
T^{s}_{\sigma_j}(f_j, h_j)(x)
=
2^{jm}
f^{+}_{j}(x)
f^{-}_{j}(x),
\end{align*}
it follows from \eqref{Kjxradial} that 
\begin{align*}
\big|
T^{s}_{\sigma_j}(f_{j}, h_j)(x)
\big|
\ichi_{\{a^{\prime} < 2^{j(1-s)}|x| \le b^{\prime}\}}
&
\approx
2^{jm}
2^{j(n-\frac{ns}{2})}
2^{j(n-\frac{ns}{2})}
\ichi_{\{a^{\prime} < 2^{j(1-s)}|x| \le b^{\prime}\}}
\end{align*}
for all $j > j_0$.
Hence, we obtain
\begin{align*}
\big\|
T^{s}_{\sigma_j}(f_{j}, h_j)
\big\|_{L^r}
\gtrsim
2^{jm}
2^{j(n-\frac{ns}{2})}
2^{j(n-\frac{ns}{2})}
2^{-j(1-s)\frac{n}{r}},
\quad
j > j_0.
\end{align*}
Combining this with \eqref{est_opnorm}, \eqref{est_of_f_j1} and \eqref{est_of_h_j},
we have
\begin{align*}
2^{jm}
2^{j(n-\frac{sn}{2})}
2^{j(n-\frac{sn}{2})}
2^{-j(1-s)\frac{n}{r}}
\lesssim
2^{j(n - \frac{n}{p})}
2^{j(n - \frac{sn}{2})}
2^{-j(1-s)\frac{n}{q}},
\,\,\,\,
j > j_0,
\end{align*}
which is possible only when
$m
\le
-sn
(\frac{1}{p}
-
\frac{1}{2})$.
This completes the proof of the case $(p, q) \in \IV$.

\bigskip
\noindent
\textit{Proof of the case $(p, q) \in \VI$.}
Since the situation is symmetrical, 
we obtain the desired conclusion 
in the same way as for the case $(p, q) \in \IV$.
Thus Theorem \ref{thm_necessity}
is proved.
\end{proof}



\end{document}